\numberwithin{equation}{section}
\newtheorem{prop}{Proposition}[section]
\newtheorem{defi}[prop]{Definition}
\newtheorem{teo}[prop]{Theorem}
\newtheorem{coro}[prop]{Corollary}
\newtheorem{obs}[prop]{Remark}
\newtheorem{lema}[prop]{Lemma}
\newtheorem{pp}[prop]{Proposition}
\newcommand\reallywidehat[1]{\savestack{\tmpbox}{\stretchto{  \scaleto{    \scalerel*[\widthof{\ensuremath{#1}}]{\kern-.6pt\bigwedge\kern-.6pt}    {\rule[-\textheight/2]{1ex}{\textheight}}  }{\textheight}}{0.5ex}}\stackon[1pt]{#1}{\tmpbox}}
\begin{document}

\title{Stationary solutions for the fractional Navier-Stokes-Coriolis system\\ in Fourier-Besov spaces}
\author{
{Leithold L. Aurazo-Alvarez
}{
\thanks{{
{\emph{Email address}}: leithold@unicamp.br}
\newline
{\small \noindent\textbf{AMS MSC:} 35A01, 35Q30, 35Q35, 76D05,  76U05, 76U60, 35Q86, 76D03
}
\newline
{The author was supported by Cnpq (Brazil) and by University of Campinas, SP, Brazil.
 }}} \\
{\small 
IMECC-Department of Mathematics, University of Campinas} \\
{\small Campinas, SP, CEP 13083-859, Brazil.}}
\date{}
\maketitle

\begin{abstract}	
In this work we prove the existence of stationary solutions for the tridimensional fractional Navier-Stokes-Coriolis in critical Fourier-Besov spaces. We first deal with the non-stationary fractional Navier-Stokes-Coriolis and in this framework we get the existence of stationary solutions. Also we state a kind of stability of these non-stationary solutions which applied to the stationary case permits to conclude that, under suitable conditions, non-stationary solutions converge to the stationary ones when the time goes to infinity. Finally we establish a relation between the external force and the Coriolis parameter in order to get a unique solution for the stationary system. 

{\small \medskip\textbf{Keywords:} 
Stationary solutions; Fractional Navier-Stokes-Coriolis system; Fourier-Besov spaces; Asymptotic stability}
\end{abstract}
\renewcommand{\abstractname}{Abstract}

\section{Introduction}
In this work we are interested in looking for stationary solutions for the tridimensional fractional Navier-Stokes-Coriolis system

\begin{equation*}
(FNSC)\,\,\,\left\{
\begin{split}
&  \partial_{t}u+\nu(-\Delta)^{\alpha}u+u\cdot\nabla u+\Omega e_{3}\times u+\nabla p=F,\\
&  \mbox{div}\,u=0, \,\,\mbox{for}\,\,(x,t)\in\mathbb{R}^{3}\times(0,\infty
), \,\,\mbox{and}\\
&  u(x,0)=u_{0}(x),\,\,\mbox{for}\,\,x\in
\mathbb{R}^{3},
\end{split}
\right.  \label{SemigroupSystem}%
\end{equation*}
where $u = u(t, x) = (u_1(t, x), u_2(t, x), u_3(t, x))$ denotes the velocity vector field and $p = p(t, x)$ the scalar pressure of the fluid particle passing at the point $x = (x_1, x_2, x_3) \in \mathbb{R}^{3}$, $u_0(x)$ is the given initial velocity vector field and $F$ is the external force. The Coriolis parameter is denoted by $\Omega \in \mathbb{R}$ which is proportional to the angular velocity of the rotation with respect to the vertical unit vector $e_3 = (0, 0, 1)$, the kinematic viscosity coefficient is denoted by $\nu>0$ and the symbol $\times$ is reserved for the exterior product of vectors. Stationary solutions with homogeneity appear to play an important rol in the regularity of non-linear partial differential equations which are physical models  of reality phenomena, as one can observe in the regularity theory for harmonic maps.

Now we shall review some studies involving this kind of solutions in the entire space, in exterior domain or bounded domains in order to give an overview of the problems related with the above system. Leray (\cite{LerayEtudeHydrodynamique}, 1933), Fujita (\cite{FujitaExistenceRegularitySteady}, 1961) and Finn (\cite{FinnExteriorStationary}, 1965)
studied the existence of solutions for the exterior stationary problem of the Navier-Stokes system with $F=\nabla \cdot f$ and a null prescribed vector in the class of the finite Dirichlet integral $\displaystyle{\int_{\Omega}} \mid \nabla u(x)\mid^{2}\,dx<\infty$ and in the class of vectors satisfying 

\begin{equation*}
\displaystyle{\sup_{x\in\Omega}}\mid x\mid \mid u(x)\mid + \displaystyle{\sup_{x\in\Omega}}\,\frac{\mid x\mid^{2}}{\log\mid x\mid} \mid \nabla u(x)\mid < \infty.     
\end{equation*}
They mostly assumed the rapidly decaying at infinity of the tensor $F$. In particular, Finn (\cite{FinnExteriorStationary}, 1965) proved the existence of physically reasonable solutions in the case in which the data on the associated closed surface are close to a prescribed vector. This kind of solutions sketch a called ``wake region'' of fluid behind the surface which tends to follow the same surface along the opposite prescribed vector. The obtained solution tends to its limit at infinity in the order $\mid x\mid^{-1}$ in the wake region and in the order $\mid x\mid^{-2}$ in any different direction to the prescribed vector. Heywood (\cite{HeywoodStationaryNS}, 1970) studied the initial boundary-value problem for the Navier-Stokes system where the initial data is close to the solution of the associated stationary Navier-Stokes system. He proved that when the stationary solution satisfies an asymptotic estimate, the problem has a global unique solution and this solution converges to the associated stationary solution as time tends to infinity. Chen (\cite{ChenLnSolutionsStationary}, 1993) proved that the Navier-Stokes system in $\mathbb{R}^{n}$, for $n\geq 3$, and the associated stationary Navier-Stokes system have smooth solutions in $L^{n}$ and the non-stationary solution is arbitrarily close in the $L^{n}$-norm to the stationary solution as time goes to infinity. Kozono and Yamazaki (\cite{KozonoYamazakiStabilityStationaryMorrey}, 1995) proved that when the external force satisfies a certain condition there exists a unique small solution for the stationary Navier-Stokes system in Morrey spaces, moreover that stationary solutions are stable in the same Morrey space. Novotny and Padula (\cite{NovotnyPadulaNoteondecay}, 1995), Borchers and Miyakawa (\cite{BorcherdMiyakawaStabilityExteriorStationary}, 1995) and Galdi and Padula (\cite{GaldiPadulaExistenceSteadyObstacle}, 1991), by using the potential theory in hydrodynamics, studied the stationary Navier-Stokes system in the class

\begin{equation*}
\displaystyle{\sup_{x\in\Omega}}\mid x\mid^{n-2} \mid u(x)\mid + \displaystyle{\sup_{x\in\Omega}}\,\mid x\mid^{n-1} \mid \nabla u(x)\mid < +\infty,    
\end{equation*}
whenever the expression $\displaystyle{\sup_{x\in\Omega}}\mid x\mid^{n-1} \mid f(x)\mid + \displaystyle{\sup_{x\in\Omega}}\,\mid x\mid^{n} \mid \nabla f(x)\mid$ is small enough, where $F=\nabla \cdot f$. Le Jan and Sznitman (\cite{LeJanSznitmanStochastic cascadesNS}, 1997) studied the incompressible Navier-Stokes equations in $\mathbb{R}^{3}$ and introduce a probabilistic interpretation of the Navier-Stokes system. They, looking for the Fourier representation of the system, establish a non-linear integral equation for the Fourier transform of the Laplacian of the velocity, this integral equation can be interpreted by mean of a critical branching process (called stochastic cascade) and a composition rule along the associated tree and is based in a Markovian kernel with suitable properties. Moreover, in order to obtain existence and uniqueness results they obtain a ``domination principle'' for the associated non-linear integral equation. Tian and Xin (\cite{TianXinOnePointSingular}, 1998)
looking for specific solutions and solving a system of 
second order ordinary differential equations with four 
unknown functions, constructed a one-parameter family of 
explicit smooth solutions of the Navier-Stokes equations on 
$\mathbb{R}^{3}-\{x_0\}$, here $x_0$ is any given point; these
solutions solving the stationary Navier-Stokes system have 
some properties, for instance, they are axisymmetric, homogeneous of
degree $-1$ (called one-point singular solution), solve the
self-similar form for that system and proved the uniqueness in the 
class of axisymmetric flows. Kozono and Yamazaki (\cite{KozonoYamazakiExteriorProblemStationary}, 1998), using a functional analysis approach by mean of the $L^{r}$-theory for the associated Stokes equations, obtained a more larger class of external forces than rapidly decaying at infinity forces and also studied the behaviour of $u(x)$ and $\nabla u(x)$ as $\mid x\mid \rightarrow \infty$. They used the linearized method in Lorentz spaces $L_{n,\infty}$, considering $f\in L_{n/2,\infty}(D)$, where $D$ is an exterior domain in $\mathbb{R}^{n}$, $n\geq 3$ and $F=\nabla\cdot f$; thus they construct a solution $u$ of the stationary system with $u\in L_{n,\infty}(D)$ and 
$\nabla u\in L_{n/2,\infty}(D)$, being these norms invariant by the scaling of the system. Yamazaki (\cite{YamazakiWeakLnspaceTimedependentforce}, 2000) studied the Navier-Stokes system with time-dependent external force for $t\in\mathbb{R}$ as well as $t\in \mathbb{R}^{+}$, he studied that system considering the domain being the whole space, the half space or an exterior domain of dimension $n\geq 3$. In that work also are given sufficient conditions on the external force in order to get the existence of a unique existence of small solution in the weak-$L^{n}$ space which are bounded globally in time. By using the one-norm approach, Cannone and Karch (\cite{CannoneKarchSmoothorsingularNS}, 2004) proved the existence of singular solutions in the pseudomeasure space for the incompressible Navier-Stokes system with singular external forces (for instance the Dirac delta). This approach permits to consider the solutions obtained by Landau (\cite{LandauNewExactSolutionNS}, 1944) and by Tian and Xin (\cite{TianXinOnePointSingular}, 1998). It also was proved the asymptotic stability of small solutions including stationary ones. By the study of eigenvalues and eigenfunctions of the associated Coriolis operator, Chemin et al (\cite{CheminetalMGeophysics}, 2006) derived dispersion estimates for a linearized version of the Navier-Stokes-Coriolis system in order to show the existence of global solutions for the non-stationary Navier-Stokes-Coriolis system. Here they considered the decaying data case. Kim and Kozono (\cite{KimKozonoRemovableIsolatedSingularity}, 2006) considered the stationary Navier-Stokes equations in a non-empty open subset of $\mathbb{R}^{n}$, with $n\geq 3$, and proved that smooth solutions of the Navier-Stokes equations in the domain $B_{R}(0)-\{0\}$ are smooth solutions in the domain $B_{R}$, whenever the external force $F$ is smooth in $B_{R}(0)$ and the solution $u$ satisfies either conditions

\begin{equation*}
u\in L^{n}(B_{R}(0))\,\,\mbox{or}\,\, \mid u(x)\mid =o(\mid x\mid^{-1})\,\,\mbox{as}\,\, x\rightarrow 0.
\end{equation*}
 These kind of results are known as removable isolated singularity theorems for smooth solutions of the Navier-Stokes equations (NS). Ferreira and Villamizar-Roa (\cite{Ferreira-Villamizar-R-Micropolar-distributions-2007}, 2007) proved the existence of a unique mild solution for the generalized tridimensional micropolar system for small initial data in ${\cal PM}^{a}$-spaces and in this framework are also considered stationary solutions. They also established a result of asymptotic stability, similar to the ones obtained by Cannone and Karch (\cite{CannoneKarchSmoothorsingularNS}, 2004). Ferreira and Villamizar (\cite{Ferreira-Villamizar-R-Existence-convection-pseudomeasure-2008}, 2008 ) studied a $n$-dimensional generalized convection problem with gravitacional field depending of the space variable, in the context of ${\cal PM}^{a}$-spaces. They obtained local and global well-posedness result for this system and, in particular, for the  B\'enard problem, which correspond to a generalized Newtonian gravitational field. They also study regularizations properties for solutions. Bjorland and Schonbek (\cite{BjorlandSchonbekExistenceStabilitySteadyNS}, 2009), by considering the shape of the initial data near the origin in Fourier variables (i.e. low frequencies) and taking a small external force in a natural norm, obtained an energy decaying rate of solutions for an associated parabolic partial differential equation in the whole space and then with this decaying property it was possible rigorously integrate the parabolic solution in time in order to establish the existence of a finite $L^{2}(\mathbb{R}^{3})$-norm stationary solution. They also proved the uniqueness in the class of solutions with finite energy and
by using the Fourier splitting method, established that a finite energy perturbations to the stationary solution return to the initial stationary solution, this kind of stability is known as non-linearly stable. Konieczny and Yoneda (\cite{KoniecznyYonedaDispersiveStationaryNSC}, 2011) studied the dispersive effect of the Coriolis force for the stationary and non-stationary Navier-Stokes equations. For that aim was introduced the Fourier-Besov spaces which permits, by the explicit description of the semigroup in Fourier variable, to show the rol of the Coriolis parameter in the solution of the system. For some indexes, these Fourier-Besov spaces contain homogeneous functions of negative degree. One of the main results of that work is the existence of a unique solution for the stationary Navier-Stokes system with the Coriolis force for arbitrarily large external force, whenever the Coriolis force is choosen sufficiently large. Ferreira and Lima (\cite{Ferreira-Lima-Selfsimilar-active-scalar-FBM-2014}, 2014)  studied a family of dissipative active scalar equation considering fractional diffusion with sub-critical values. In order to do that and motivated by Fourier-Besov spaces, they introduce the Fourier-Besov-Morrey spaces, which for some fixed parameter coincide with the Fourier-Besov spaces.  For some indexes, both of these spaces allow to consider large $L^{2}$-norms and homogeneous functions of negative degree, for the initial data. They established the well-posedness for the equation for small initial data in a critical Fourier-Besov-Morrey space by mean of a one-norm approach. They also provided an analysis of asymptotic stability for solutions. In general, are important function spaces which contain singular functions. To mention one, Ferreira and Precioso (\cite{Ferreira-Precioso-Existence-micropolar-Besov-Morrey-2013}, 2013) studied the $3D$-micropolar system and obtained a local well-posedness result in critical Besov-Morrey spaces, which contain highly singular functions and measures whose support could be a point, a filament or a surface. Almeida, Ferreira and Lima (\cite{Almeida-Ferreira-Lima-Uniform-NSC-FBM-2017}, 2017) also studied the Navier-Stokes-Coriolis system in the framework of critical Fourier-Besov-Morrey spaces and obtained a uniform (the smallness conditions for the initial data does not depends of the Coriolis parameter) global well-posedness result for this system. Kaneko, Kozono and Shimizu (\cite{KanekoKozonoShimizuStationaryNSBesov}, 2019) studied the stationary Navier-Stokes system in $\mathbb{R}^{n}$, for $n\geq 3$, and they obtained the existence, uniqueness and regularity of solutions in the associated scaling invariante homogeneous Besov space $B^{-1+\frac{n}{p}}_{p,q}(\mathbb{R}^{n})$, for $1\leq p<n$ and $1\leq q\leq \infty$, for an small external force $F\in B^{-3+\frac{n}{p}}_{p,q}(\mathbb{R}^{n})$ , where restriction for the indexes are taken because of the bilinear estimates for the paraproduct formula and the imbedding theorem in Besov spaces. It is important to observe that the space $B^{-3+\frac{n}{p}}_{p,q}(\mathbb{R}^{n})$ contains homogeneous functions of degree $-3$, for the external force, and the space $B^{-3+\frac{n}{p}}_{p,q}(\mathbb{R}^{n})$ contains homogeneous functions of degree $-1$, for the velocity, so that the existence result also give rise to self-similar solutions for this system. Ferreira and the author (\cite{Aurazo-Ferreira-Global-BC-Stratification-FBM-2021}, 2021) studied the tridimensional fractional Boussinesq-Coriolis system with stratification 
in the context of critical Fourier-Besov-Morrey spaces. We proved a global well-posedness result for small initial data in this spaces, for a range of values for the exponent of minus the Laplacian. There, we also consider the critical case for this system and for the tridimensional fractional Navier-Stokes-Coriolis system.  

Results given in the current work have two objectives: The first one is to expose a one-norm approach for the global well-posedness of the fractional Navier-Stokes-Coriolis system in critical Fourier-Besov spaces. Thus we extend some results obtained by Konieczny and Yoneda (\cite{KoniecznyYonedaDispersiveStationaryNSC}, 2011) for the non-stationary case as well as the stationary one. In particular the existence of a unique small stationary solutions for the (FNSC)-system for any arbitrarily large external force, whenever the Coriolis parameter is taken large enough. The second one is to describe the behaviour at infinity of solutions for the (FNSC)-system for a range o values $\frac{1}{2}<\alpha<\frac{1}{4}\left(5-\frac{3}{p}\right)$ in critical Fourier-Besov spaces $\dot{FB}^{4-2\alpha-\frac{3}{p}}_{p,q}$, for $p>1$ and $1\leq q\leq \infty$; thus, for $\alpha=1$ and $\Omega=0$, we complement the approach given by Cannone and Karch (\cite{CannoneKarchSmoothorsingularNS}, 2004) for ${\cal PM}^{2}$-spaces, which correspond to $\dot{FB}^{4-2\alpha-\frac{3}{p}}_{p,q}$ with $\alpha=1$ and $p=q=\infty$. For instance, it is valid that, under suitable conditions, non-stationary solutions for the (FNSC)-system tends to the stationary ones as time goes to infinity. In particular, all results given here are valid for homogeneous Sobolev spaces $\dot{H}^{\frac{5}{2}-2\alpha}$, for the range $\frac{1}{2}<\alpha<\frac{7}{8}$. Moreover, for the case $\alpha=1$ and $\Omega=0$, we also can claim that we obtain a partial counterpart for Theorem 1.1 obtained by Kaneko, Kozono and Shimizu (\cite{KanekoKozonoShimizuStationaryNSBesov}, 2019) in Besov-spaces.

This work is organized as follow: In Section \ref{preliminar} we recall function spaces, the associated semigroup for the (FNSC)-system, the notion of mild solution and introduce some tools related to our approach. In Section \ref{main} we state the main theorems of this work. Finally in Section \ref{stationary} we prove these main results.

\section{Preliminaries}\label{preliminar}
 In this section we recall some of the used function spaces and properties related to, we state the fractional Navier-Stokes-Coriolis semigroup, the notion of mild solutions for the (FNSC)-system and the (SFNSC)-system; we recall the paraproduct formula, the fixed point lemma, which we shall apply in order to prove the existence of a unique mild solution, and finally some related time-dependent spaces in order to establish the language for this work.
 \subsection{Fourier-Besov spaces}
Fourier-Besov spaces were introduced in the work of Konieczny and Yoneda (\cite{KoniecznyYonedaDispersiveStationaryNSC}, 2011) to study the dispersive effect of the Coriolis force for both the stationary and the non-stationary Navier-Stokes-Coriolis system. They consider a radially symmetric function $\varphi\in {\cal S}(\mathbb{R}^{3})$ supported in the annulus $\{\xi\in\mathbb{R}^{3};\frac{3}{4}\leq \mid \xi\mid \leq \frac{8}{3}\}$ such that 

\begin{equation*}
\displaystyle{\sum_{j\in\mathbb{Z}}}\varphi(2^{-j}\xi)=1\,\,\mbox{for all}\,\,\xi\neq 0.    
\end{equation*}
With this function $\varphi\in {\cal S}(\mathbb{R}^{3})$ they introduce two associated functions

\begin{equation*}
    \varphi_j(\xi)=\varphi(2^{-j}\xi)\,\,\mbox{and}\,\,\psi_j(\xi)=\displaystyle{\sum_{k\leq j-1}}\varphi_k(\xi).
\end{equation*}
After that, they introduce the standard localization operators:

\begin{equation*}
\Delta_{j}f=\varphi_{j}(D)f,\,\,\,S_{j}f=\displaystyle{\sum_{k\leq j-1}}\Delta_{k}f=\psi_{j}(D)f,\,\,\,\mbox{for every}\,\,j\in\mathbb{Z}.
\end{equation*}
These operators satisfy the following properties:

\begin{equation*}
\Delta_j\Delta_kf=0,\,\,\mbox{if}\,\mid j-k\mid \geq 2, \,\,\mbox{and}    
\end{equation*}

\begin{equation*}
\Delta_j(S_{k-1}f \Delta_kf)=0, \,\,\mbox{if}\,\mid j-k\mid \geq 5.    
\end{equation*}
\begin{defi}
The homogeneous Fourier-Besov spaces are defined in both the following cases:
\begin{itemize}
    \item [(i)] For $1\leq p\leq \infty$, $1\leq q< \infty$ and $s\in \mathbb{R}$,
    
    \begin{equation*}
    \dot{FB}^{s}_{p,q}(\mathbb{R}^{n})=\{
    f\in {\cal S}': \hat{f}\in L^{1}_{loc}, \parallel f\parallel_{\dot{FB}^{s}_{p,q}(\mathbb{R}^{n})}=
    \left( \displaystyle{\sum_{k\in\mathbb{Z}}}\,2^{ksq}\parallel \varphi_k\hat{f}\parallel_{L^{p}(\mathbb{R}^{n})}^{q} \right)^{1/q}<+\infty
    \}
    \end{equation*}
     \item [(ii)] For $1\leq p\leq \infty$ and $s\in \mathbb{R}$,
     
\begin{equation*}
    \dot{FB}^{s}_{p,\infty}(\mathbb{R}^{n})=\{
    f\in {\cal S}': \hat{f}\in L^{1}_{loc}, \parallel f\parallel_{\dot{FB}^{s}_{p,\infty}(\mathbb{R}^{n})}=\displaystyle{\sup_{k\in\mathbb{Z}}}\,2^{ks}\parallel\varphi_k\hat{f}\parallel_{L^{p}(\mathbb{R}^{n})}<+\infty\}
\end{equation*}
\end{itemize}
\begin{obs}
The space $\dot{FB}_{p,q}^{s}$ contains homogeneous
functions of degree $-d=s-n+\frac{n}{p}$.
\end{obs}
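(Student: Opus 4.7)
The plan is to verify the remark by a direct scaling argument applied to the Littlewood-Paley blocks on the Fourier side. I take a function $f$ that is homogeneous of degree $-d$ on $\mathbb{R}^{n}\setminus\{0\}$, i.e.\ $f(\lambda x)=\lambda^{-d}f(x)$ for every $\lambda>0$. Using the standard Fourier scaling rule $\widehat{f(\lambda\,\cdot)}(\xi)=\lambda^{-n}\hat{f}(\xi/\lambda)$, valid in the sense of tempered distributions, I would deduce that $\hat{f}$, understood as a tempered distribution coinciding with a locally integrable function away from the origin, is homogeneous of degree $d-n$; that is, $\hat{f}(\lambda\xi)=\lambda^{d-n}\hat{f}(\xi)$ for all $\lambda>0$.

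The main computation is then the change of variables $\xi=2^{k}\eta$ inside the block $\varphi_{k}\hat{f}=\varphi(2^{-k}\,\cdot)\hat{f}$. Combining $\hat{f}(2^{k}\eta)=2^{k(d-n)}\hat{f}(\eta)$ with the $L^{p}$-scaling factor $2^{kn/p}$ yields
\begin{equation*}
\|\varphi_{k}\hat{f}\|_{L^{p}(\mathbb{R}^{n})}=2^{k(d-n+n/p)}\,\|\varphi_{0}\hat{f}\|_{L^{p}(\mathbb{R}^{n})}.
\end{equation*}
Substituting into the definition of the Fourier-Besov norm, one obtains
\begin{equation*}
\|f\|_{\dot{FB}^{s}_{p,q}}^{q}=\|\varphi_{0}\hat{f}\|_{L^{p}}^{q}\sum_{k\in\mathbb{Z}}2^{kq(s+d-n+n/p)},
\end{equation*}
together with the analogous supremum for the case $q=\infty$. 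The decisive point is that both the geometric sum and the supremum are well behaved precisely when the exponent $s+d-n+n/p$ vanishes, which is exactly the relation $-d=s-n+n/p$ stated in the remark.

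To make the word ``contains'' rigorous, I would argue as follows. For $q=\infty$, at the critical exponent the supremum equals $\|\varphi_{0}\hat{f}\|_{L^{p}}$, and this is finite provided the angular profile of $\hat{f}$ on the unit sphere is sufficiently regular; this produces a genuine nonzero element of $\dot{FB}^{s}_{p,\infty}$. For $1\le q<\infty$ a nonzero homogeneous function produces a divergent sum, so I would interpret the statement as the equivalent scaling invariance: the dilation $f(x)\mapsto \lambda^{d}f(\lambda x)$ preserves the $\dot{FB}^{s}_{p,q}$-norm if and only if $-d=s-n+n/p$, and homogeneous functions of degree $-d$ are precisely the fixed points of this scaling.

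The only real obstacle I anticipate is the identification of $\hat{f}$ with a locally integrable function homogeneous of degree $d-n$ together with control of its angular part, since this regularity of the restriction to the unit sphere is what ensures $\varphi_{0}\hat{f}\in L^{p}$ and therefore yields an honest nonzero representative in the space in the $q=\infty$ case.
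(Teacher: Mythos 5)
Your argument is correct and is the standard scaling computation that the remark tacitly relies on; the paper states this remark without proof, so there is nothing to diverge from. The key identity $\|\varphi_{k}\hat{f}\|_{L^{p}}=2^{k(d-n+n/p)}\|\varphi_{0}\hat{f}\|_{L^{p}}$, obtained from the homogeneity of degree $d-n$ of $\hat{f}$ and the change of variables $\xi=2^{k}\eta$, is exactly right, and it forces $s+d-n+\frac{n}{p}=0$, i.e.\ $-d=s-n+\frac{n}{p}$. You are also right to flag that, taken literally, a nonzero homogeneous function lies in $\dot{FB}^{s}_{p,q}$ only for $q=\infty$ (where the supremum of the constant sequence $2^{ks}\|\varphi_k\hat f\|_{L^p}$ is finite), whereas for $1\le q<\infty$ the sum $\sum_{k\in\mathbb{Z}}1$ diverges; for finite $q$ the remark should be read as the scale-invariance of the norm under $f\mapsto\lambda^{d}f(\lambda\cdot)$, precisely as you propose. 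Your closing caveat about needing enough regularity of the angular part of $\hat f$ on the sphere (so that $\varphi_{0}\hat{f}\in L^{p}$ and $\hat f\in L^{1}_{loc}$, the latter requiring $d>0$) is the only genuine hypothesis hidden in the remark, and you have identified it correctly.
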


\end{defi}
Next lemma describes some useful assertions we use in this work related to $L^{p}$-spaces and Fourier-Besov spaces.

\begin{lema}
Let $s_{1},s_{2}\in\mathbb{R}$, $1\leq p_{1},p_{2}<+\infty$.

\begin{itemize}

\item[(i)] (Bernstein-type inequality) Suppose that $p_{2}\leq p_{1}$ and $A>0$. If $\mbox{supp}\,(\hat{f})\subset\{\xi\in\mathbb{R}^{n};\mid\xi\mid\leq A2^{j}\}$, then

\begin{equation*}
\parallel\xi^{\beta}\hat{f}\parallel_{p_{2}}\leq C2^{j\mid\beta
\mid+j(\frac{n}{p_{2}}-\frac{n}{p_{1}})}\parallel\hat
{f}\parallel_{p_{1}},
\end{equation*}
where $\beta$ is the multi-index, $j\in\mathbb{Z}$, and $C>0$ is a constant
independent of $j,\xi$ and $f$.

\item[(ii)] (Sobolev-type embedding) For $p_{2}\leq p_{1}$ and $s_{2}\leq
s_{1}$ satisfying $s_{2}+\frac{n}{p_{2}}=s_{1}+\frac{n}{p_{1}%
}$, we have the continuous inclusion
\[
\dot{FB}_{p_{1},r_{1}}^{s_{1}}\subset\dot{FB}_{p_{2},r_{2}}^{s_{2}},
\]
for all $1\leq r_{1}\leq r_{2}\leq\infty$.
\end{itemize}
\end{lema}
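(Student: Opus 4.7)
For part (i), my plan is to exploit the compactness of $\supp(\hat{f})\subset B(0,A 2^j)$ and reduce the inequality to elementary facts. On this set one has the pointwise bound $|\xi^{\beta}\hat{f}(\xi)|\le(A 2^j)^{|\beta|}|\hat{f}(\xi)|$, so after factoring out this power of $2^j$ the problem amounts to comparing $L^{p_2}$ and $L^{p_1}$ norms of $\hat{f}\cdot\mathbf{1}_{B(0,A 2^j)}$. Since $p_2\le p_1$ and the ball has finite Lebesgue measure, H\"older's inequality with the conjugate exponent $r$ determined by $1/p_2=1/p_1+1/r$ produces the volume factor $|B(0,A 2^j)|^{1/p_2-1/p_1}=C_n 2^{jn(1/p_2-1/p_1)}$. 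Combining the two steps yields the Bernstein bound with a constant depending only on $n$, $\beta$ and $A$.

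For part (ii), the plan is to reduce matters to (i) block by block. Each $\varphi_k\hat{f}$ is supported in the annulus $\{\tfrac{3}{4}\cdot 2^k\le|\xi|\le\tfrac{8}{3}\cdot 2^k\}$, so applying part (i) with $\beta=0$, $A=\tfrac{8}{3}$ and $j=k$ gives
\[
\parallel\varphi_k\hat{f}\parallel_{L^{p_2}}\le C\,2^{kn(1/p_2-1/p_1)}\parallel\varphi_k\hat{f}\parallel_{L^{p_1}}.
\]
Multiplying both sides by $2^{k s_2}$ and invoking the scaling identity $s_1-s_2=n(1/p_2-1/p_1)$, which is precisely the Sobolev relation $s_2+n/p_2=s_1+n/p_1$, converts the left-hand weight $2^{k s_2}$ into the right-hand weight $2^{k s_1}$:
\[
2^{k s_2}\parallel\varphi_k\hat{f}\parallel_{L^{p_2}}\le C\,2^{k s_1}\parallel\varphi_k\hat{f}\parallel_{L^{p_1}}.
\]

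To finish, I would take $\ell^{r_2}(\mathbb{Z})$-norms in $k$ on both sides and then use the elementary inclusion $\ell^{r_1}(\mathbb{Z})\hookrightarrow\ell^{r_2}(\mathbb{Z})$, valid because $r_1\le r_2$, to dominate the right-hand side by $C\parallel f\parallel_{\dot{FB}^{s_1}_{p_1,r_1}}$. I do not expect any genuine obstacle: the only care point is a uniform treatment of the endpoint cases $r_1=\infty$ or $r_2=\infty$, which must be handled directly from the $\sup$ form of the norm. Overall the lemma is a routine consequence of the spectral localization, the classical Bernstein inequality and monotonicity of the $\ell^r$ scale.
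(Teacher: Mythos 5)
Your argument is correct. The paper states this lemma without proof (it is a standard preliminary borrowed from the Fourier--Besov literature, e.g.\ Konieczny--Yoneda), so there is no internal proof to compare against; your route is exactly the expected one. Two small points worth making explicit if you write it up: the pointwise bound should be justified via $\mid\xi^{\beta}\mid\leq\mid\xi\mid^{\mid\beta\mid}\leq (A2^{j})^{\mid\beta\mid}$ on the support, and in the H\"older step the conjugate exponent $r$ given by $\frac{1}{p_{2}}=\frac{1}{p_{1}}+\frac{1}{r}$ satisfies $r\geq 1$ automatically because $1\leq p_{2}\leq p_{1}$, with the degenerate case $p_{1}=p_{2}$ giving $r=\infty$ and $\parallel \mathbf{1}_{B}\parallel_{\infty}=1$. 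Note also that this ``Bernstein-type'' inequality lives entirely on the Fourier side, which is why the elementary H\"older argument suffices and no convolution with a smooth cutoff (as in the classical physical-side Bernstein inequality) is needed; your block-by-block reduction of (ii) to (i) with $\beta=0$, followed by the scaling identity $s_{1}-s_{2}=n(\frac{1}{p_{2}}-\frac{1}{p_{1}})$ and the monotonicity $\ell^{r_{1}}\hookrightarrow\ell^{r_{2}}$, is the standard and complete proof.
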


\begin{obs}
	When indexes $p$ and $q$ are equals there is an equivalent norm on $\dot{FB}^{s}_{p,p}$, 
	
	\begin{equation*}
	\parallel f\parallel_{\dot{FB}^{s}_{p,p}}\sim
	\left(\displaystyle{\int_{\mathbb{R}^{d}}} \mid\xi\mid^{sp}
		\mid \hat{f}(\xi)\mid^{p}d\,\xi \right)^{\frac{1}{p}}.
	\end{equation*}
\end{obs}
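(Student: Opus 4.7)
The plan is to exploit the fact that, when $p=q$, the discrete $\ell^p$-sum defining the Fourier-Besov norm is nothing but an $L^p$-sum, so one can swap sum and integral by Tonelli's theorem and reduce the equivalence to a purely pointwise multiplier estimate. More precisely, I would begin by writing
\begin{equation*}
\|f\|_{\dot{FB}^{s}_{p,p}}^{p}
=\sum_{k\in\mathbb{Z}} 2^{ksp}\int_{\mathbb{R}^{d}}|\varphi_{k}(\xi)|^{p}|\hat{f}(\xi)|^{p}\,d\xi
=\int_{\mathbb{R}^{d}}\Phi(\xi)\,|\hat{f}(\xi)|^{p}\,d\xi,
\end{equation*}
where $\Phi(\xi)=\sum_{k\in\mathbb{Z}}2^{ksp}|\varphi_{k}(\xi)|^{p}$. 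Thus the whole statement reduces to showing the pointwise equivalence
\begin{equation*}
c_{1}|\xi|^{sp}\leq \Phi(\xi)\leq c_{2}|\xi|^{sp},\qquad \xi\in\mathbb{R}^{d}\setminus\{0\},
\end{equation*}
with constants depending only on $\varphi$, $s$ and $p$.

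For the upper bound I would use the annular support of $\varphi$: since $\mathrm{supp}\,\varphi\subset\{3/4\leq|\xi|\leq 8/3\}$, for a fixed $\xi\neq 0$ the factor $\varphi_{k}(\xi)=\varphi(2^{-k}\xi)$ vanishes unless $2^{k}$ is comparable to $|\xi|$, so only a uniformly bounded number $N_{0}$ of indices $k$ contribute; for those indices $2^{k}\sim|\xi|$ and hence $2^{ksp}\lesssim|\xi|^{sp}$, while $|\varphi_{k}(\xi)|\leq \|\varphi\|_{\infty}$. Summing gives $\Phi(\xi)\leq c_{2}|\xi|^{sp}$.

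For the lower bound I would invoke the partition-of-unity property $\sum_{k}\varphi_{k}(\xi)=1$ for every $\xi\neq 0$. Since only at most $N_{0}$ of these terms are nonzero, at least one of them satisfies $\varphi_{k_{0}}(\xi)\geq 1/N_{0}$, and for that index $2^{k_{0}s p}\gtrsim|\xi|^{sp}$. Consequently $\Phi(\xi)\geq N_{0}^{-p}2^{k_{0}sp}\gtrsim|\xi|^{sp}$. Combining the two bounds yields $\Phi(\xi)\sim|\xi|^{sp}$, and inserting this into the displayed identity gives the desired equivalence
\begin{equation*}
\|f\|_{\dot{FB}^{s}_{p,p}}\sim\left(\int_{\mathbb{R}^{d}}|\xi|^{sp}|\hat{f}(\xi)|^{p}\,d\xi\right)^{1/p}.
\end{equation*}

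I expect the only delicate point is the lower bound: one must be careful with the case where $\xi$ sits near the boundary between two annuli, so that several $\varphi_{k}(\xi)$ are small simultaneously. The clean way around this is precisely the pigeonhole observation above, using that the number of nontrivial indices is bounded independently of $\xi$. Everything else is just Tonelli and the dyadic scaling $2^{k}\sim|\xi|$.
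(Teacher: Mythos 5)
Your argument is correct, and it is the standard one: the paper states this equivalence as a remark without supplying any proof, so there is nothing to compare against; your Tonelli reduction to the pointwise bound $\Phi(\xi)\sim|\xi|^{sp}$, with the upper bound from the annular support of $\varphi$ and the lower bound from the pigeonhole applied to $\sum_{k}\varphi_{k}(\xi)=1$, is exactly how one fills this gap. The only cosmetic caveat is that in the lower bound you should conclude $|\varphi_{k_{0}}(\xi)|\geq 1/N_{0}$ from $1=\bigl|\sum_{k}\varphi_{k}(\xi)\bigr|\leq N_{0}\max_{k}|\varphi_{k}(\xi)|$ rather than $\varphi_{k_{0}}(\xi)\geq 1/N_{0}$, so that the argument does not tacitly assume $\varphi\geq 0$ (which the paper never states, although the standard construction does give a nonnegative $\varphi$).
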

\subsection{Fractional Navier-Stokes-Coriolis semigroup and mild solutions}
Considering the following Stokes problem with the Coriolis force,

\begin{equation*}
\,\,\,\left\{
\begin{split}
&  \partial_{t}u+\nu(-\Delta)^{\alpha}u+\Omega e_{3}\times u+\nabla p=0,\\
&  \mbox{div}\,u=0, \,\,\mbox{for}\,\,(x,t)\in\mathbb{R}^{3}\times(0,\infty
), \,\,\mbox{and}\\
&  u(x,0)=u_{0}(x),\,\,\mbox{for}\,\,x\in
\mathbb{R}^{3},
\end{split}
\right.  \label{SemigroupSystem}%
\end{equation*}
it is possible to get the associated fractional Stokes-Coriolis semigroup, denoted by $S^{\alpha}_{\Omega}$ which in Fourier variables is given by

\begin{equation*}
(S^{\alpha}_{\Omega}u_0)^{\wedge}(t,\xi)=\cos(\Omega\frac{\xi_3}{\mid\xi\mid})e^{-\nu\mid\xi\mid^{2\alpha}t}I\hat{u}_0(\xi)+
\sin(\Omega\frac{\xi_3}{\mid\xi\mid})e^{-\nu\mid\xi\mid^{2\alpha}t}R(\xi)\hat{u}_0(\xi),
\end{equation*}
where $t\geq 0$, $\xi\in \mathbb{R}^{3}$, $I$ is the identity matrix and the matrix $R(\xi)$ is given by

\begin{equation*}
R(\xi)=\left(
\begin{matrix}
0 & \frac{\xi_3}{\mid\xi\mid} & -\frac{\xi_2}{\mid\xi\mid}\\
-\frac{\xi_3}{\mid\xi\mid} &  0 & \frac{\xi_1}{\mid\xi\mid}\\
\frac{\xi_2}{\mid\xi\mid} & -\frac{\xi_1}{\mid\xi\mid} & 0
\end{matrix}
\right). 
\end{equation*}
By the Duhammel's principle, the (FNSC)-system is 
equivalent to the integral equation given by 

\begin{equation}\label{mildform}
u(t)=S^{\alpha}_{\Omega}(t)u_0 + B(u,u)+\tilde{F}, 
\end{equation}
where the bilinear operator $B(\cdot, \cdot)$ is def{}ined via the Fourier 
transform as follows,

\begin{equation}\label{bilinearterm}
[B(v,w)]\textasciicircum(\xi,t)=
-\displaystyle{\int_{0}^{t}}[S^{\alpha}_{\Omega}(t-\tau)]\textasciicircum(\xi) [\mathbb{P}]\textasciicircum (\xi)[i\xi\cdot(v\otimes w)\textasciicircum(\xi,\tau)]d\,\tau.
\end{equation}
Here the elements of the matrix $[v\otimes w]\textasciicircum(\xi,\cdot)$ are given by 

\begin{equation*}
([v\otimes w]\textasciicircum)_{mk}=\hat{v}_m\ast \hat{w}_k(\xi,\cdot),\,\,
\mbox{for}\,\,1\leq m,k\leq 3.
\end{equation*}
Also, the term

\begin{equation*}
\tilde{F}(t)=\displaystyle{\int_{0}^{t}}S^{\alpha}_{\Omega}(t-\tau)\mathbb{P}F(\tau)\,d\tau    
\end{equation*}
 is given in Fourier-variables by
 
\begin{equation*}
(\tilde{F})^{\wedge}(t,\xi)=\displaystyle{\int_{0}^{t}}(S^{\alpha}_{\Omega})^{\wedge}(t-\tau,\xi)\hat{\mathbb{P}}(\xi)\hat{F}(\tau,\xi)\,d\tau,    
\end{equation*}
where the symbol for the projector operator $\hat{\mathbb{P}}(\xi)$ is bounded, for all $\xi\in\mathbb{R}^{3}$. 
A solution in an appropiated Banach space ${\cal X}$ for the integral equation (\ref{mildform}) is called a mild solution for the (FNSC)-system.
\begin{lema}\label{equivStationary}
Let be $u=u(x)\in \dot{FB}^{4-2\alpha-\frac{3}{p}}_{p,q}$ and $F=F(x)\in \dot{FB}^{4-4\alpha-\frac{3}{p}}_{p,q}$. The following expressions are equivalents.
\begin{itemize}
    \item [(i)] $u$ is a stationary mild solution of the (FNSC)-system , that it, 
    
\begin{equation*}
u=S^{\alpha}_{\Omega}(t)u-\displaystyle{\int_{0}^{t}}S^{\alpha}_{\Omega}(t-\tau)
\mathbb{P}\nabla\cdot(u\otimes u)\,d\tau+\displaystyle{\int_{0}^{t}}S^{\alpha}_{\Omega}(\tau)\mathbb{P}F\,d\tau,
\end{equation*}
for every $t>0$.
\item [(ii)] u satisfies the integral equation

\begin{equation*}
u=-\displaystyle{\int_{0}^{\infty}}S^{\alpha}_{\Omega}(\tau)\mathbb{P}\nabla\cdot
(u\otimes u)\,d\tau+\displaystyle{\int_{0}^{\infty}}S^{\alpha}_{\Omega}(\tau)\mathbb{P}F\,d\tau,
\end{equation*}
    where both the integrals must be undertood in the Fourier-variables, for almost $\xi\in\mathbb{R}^{3}$.
\end{itemize}
\end{lema}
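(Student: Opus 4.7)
The natural plan is to work pointwise in Fourier variables and exploit the explicit symbol of the Stokes--Coriolis semigroup, which has the form $e^{-\nu|\xi|^{2\alpha}t}$ times bounded trigonometric factors. For the direction (i)$\Rightarrow$(ii), since $u$ and $F$ are time-independent, the first step is the change of variable $\sigma = t-\tau$ in the bilinear Duhamel integral, which recasts (i) as
\begin{equation*}
u = S^\alpha_\Omega(t)u \;-\; \int_0^t S^\alpha_\Omega(\sigma)\mathbb{P}\nabla\cdot(u\otimes u)\,d\sigma \;+\; \int_0^t S^\alpha_\Omega(\tau)\mathbb{P}F\,d\tau.
\end{equation*}
For each fixed $\xi\neq 0$, the Fourier symbol of $S^\alpha_\Omega(t)u$ at $\xi$ is bounded by $Ce^{-\nu|\xi|^{2\alpha}t}|\hat{u}(\xi)|$ and hence vanishes as $t\to\infty$, while the two integrands are dominated pointwise by $Ce^{-\nu|\xi|^{2\alpha}\sigma}|\xi|\,|\widehat{u\otimes u}(\xi)|$ and $Ce^{-\nu|\xi|^{2\alpha}\tau}|\hat{F}(\xi)|$, both integrable on $[0,\infty)$. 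Dominated convergence, together with the fact that (i) holds for every $t>0$, yields (ii) as an identity valid for almost every $\xi\in\mathbb{R}^{3}$.

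For the direction (ii)$\Rightarrow$(i), I would apply $S^\alpha_\Omega(t)$ to both sides of (ii) and use the semigroup identity $S^\alpha_\Omega(t)\,S^\alpha_\Omega(\tau) = S^\alpha_\Omega(t+\tau)$ on divergence-free fields. In Fourier variables, setting $a=\Omega\xi_3/|\xi|$, this identity reduces to
\begin{equation*}
\bigl(\cos(at)I + \sin(at)R(\xi)\bigr)\bigl(\cos(a\tau)I + \sin(a\tau)R(\xi)\bigr) = \cos(a(t+\tau))I + \sin(a(t+\tau))R(\xi),
\end{equation*}
which follows from the standard trigonometric addition formulas once one observes that $R(\xi)^2\hat{v} = -\hat{v}$ whenever $\xi\cdot\hat{v} = 0$ (a direct check on the matrix $R(\xi)$ acting on vectors orthogonal to $\xi$). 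After the substitution $\sigma = t+\tau$ and splitting $\int_t^\infty = \int_0^\infty - \int_0^t$, the $\int_0^\infty$ contributions can be recognized again via (ii) as equal to $-u$ and $+u$ respectively (which cancel up to sign), leaving an identity that becomes (i) after undoing the time reversal $\sigma = t-\tau$ in the bilinear term.

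The main technical obstacle is justifying rigorously both the passage to the limit $t\to\infty$ in (i) and the commutation of $S^\alpha_\Omega(t)$ with the improper integral in (ii). Both steps are straightforward pointwise in $\xi\neq 0$ because of the exponential factor $e^{-\nu|\xi|^{2\alpha}t}$ and the fact that the assumptions $u\in\dot{FB}^{4-2\alpha-3/p}_{p,q}$ and $F\in\dot{FB}^{4-4\alpha-3/p}_{p,q}$ ensure $\hat{u}$ and $\hat{F}$ are locally integrable, so that all integrands are finite for a.e. $\xi$. Accordingly, the equivalence is established in the Fourier-a.e. sense, exactly as phrased in statement (ii); no stronger functional framework is needed for the equivalence itself, the norm bounds being used later when the integral formulation is combined with the fixed-point argument.
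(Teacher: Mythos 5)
Your argument is correct and is exactly the standard one: the paper itself states Lemma \ref{equivStationary} without proof (it is the analogue of the equivalence used in Cannone--Karch and Konieczny--Yoneda), and your two steps --- time-translation invariance of the Duhamel integrals for time-independent data plus the pointwise decay $e^{-\nu|\xi|^{2\alpha}t}\to 0$ for $t\to\infty$, and conversely the semigroup identity $S^{\alpha}_{\Omega}(t)S^{\alpha}_{\Omega}(\tau)=S^{\alpha}_{\Omega}(t+\tau)$ on divergence-free fields via $R(\xi)^{2}\hat v=-\hat v$ for $\xi\cdot\hat v=0$ --- are precisely what is needed. Two cosmetic points: the two $\int_{0}^{\infty}$ contributions in the converse direction sum to $u$ by (ii) rather than being individually $\mp u$, and the a.e.\ finiteness of $\widehat{u\otimes u}$ is most cleanly justified by the product estimate of Lemma \ref{productX} (which places $u\otimes u$ in $\dot{FB}^{5-4\alpha-\frac{3}{p}}_{p,q}$, hence gives $\widehat{u\otimes u}\in L^{1}_{loc}$) rather than by local integrability of $\hat u$ alone.
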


\subsection{Paraproduct formula, abstract f{}ixed point lemma and
time-dependent spaces}
In this subsection we set the Bony's paraproduct formula, describe an abstract
f{}ixed point lemma and we recall suitable time-dependent spaces based on Fourier-Besov spaces which are the required solution spaces. 

Let $f,g\in\mathcal{S}^{\prime}(\mathbb{R}^{n})$, it is defined  the Bony's
paraproduct operator $T_{f}(\cdot)$ as the following expression

\begin{equation*}
T_{f}(g)=\displaystyle{\sum_{j\in\mathbb{Z}}}S_{j-1}f\Delta_{j}g, \label{Top}%
\end{equation*}
also the operator $R(\cdot,\cdot)$ is defined by 

\begin{equation*}
R(f,g)=\displaystyle{\sum_{j\in\mathbb{Z}}}\Delta_{j}f\tilde{\Delta}%
_{j}g\,\,\,\mbox{with}\,\,\,\tilde{\Delta}_{j}g=\displaystyle{\sum_{\mid
j^{\prime}-j\mid\leq1}}\Delta_{j^{\prime}}g. \label{Top2}%
\end{equation*}
The symbol of the operator $\tilde{\Delta}_{j}$ in (\ref{Top2}) is denoted by
$\tilde{\varphi}_{j}=\displaystyle{\sum_{\mid j'-j\mid\leq 1}}\varphi_{j'}$. By mean of these operators, we can put explicitly the product of $f$ by $g$ as

\begin{equation}\label{paraproduct}
fg=T_{f}(g)+T_{g}(f)+R(f,g). %
\end{equation}
Expression (\ref{paraproduct}) is known as Bony's paraproduct formula, see (\cite{BonyCalculSymbolique}, 1981) and (\cite{Lemarie2002}, 2002).
One of the main tools for this approach is the rol given to the next fixed point lemma (see \cite{Lemarie2002}).

\begin{lema}\label{fixedpoint} Let be $(X,\parallel\cdot\parallel)$ a Banach space and
$B:X\times X\rightarrow$ X a bilinear operator satisfying $\parallel
B(x_{1},x_{2})\parallel_{X}\leq K\parallel x_{1}\parallel_{X}\parallel x_{2}\parallel_{X}$, for all $x_{1},x_{2}$, where $K>0$ is a constant. If $0<\varepsilon<\frac
{1}{4K}$ and $\parallel y\parallel_{X}\leq\varepsilon$, then the equation
$x=y+B(x,x)$ has a solution in $X$. Moreover, this solution is unique in the
closed ball $\{x\in X;\parallel x\parallel_{X}\leq2\varepsilon\}$ and $\parallel x\parallel_{X}\leq2\parallel y\parallel_{X}$. The solution depends continuously on
$y$; in fact, if $\parallel\tilde{y}\parallel_{X}\leq\varepsilon$, $\tilde
{x}=\tilde{y}+B(\tilde{x},\tilde{x})$ and $\parallel\tilde{x}\parallel_{X}
\leq2\varepsilon$, then

\begin{equation*}
\parallel x-\tilde{x}\parallel_{X}\leq(1-4K\varepsilon)^{-1}\parallel y-\tilde
{y}\parallel_{X}. \label{fixedpointdependence}%
\end{equation*}
\end{lema}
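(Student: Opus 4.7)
The plan is to apply a standard Picard iteration argument. I set $x_0 = y$ and $x_{n+1} = y + B(x_n, x_n)$ for $n \geq 0$, and aim to show this sequence converges to the desired fixed point in the complete space $X$.

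First I would verify by induction that $\parallel x_n \parallel_X \leq 2\parallel y \parallel_X$ for every $n$. The case $n=0$ is trivial. Assuming the bound at step $n$, the bilinear estimate gives
\begin{equation*}
\parallel x_{n+1}\parallel_X \leq \parallel y\parallel_X + K\parallel x_n\parallel_X^{2} \leq \parallel y\parallel_X(1 + 4K\parallel y\parallel_X) \leq \parallel y\parallel_X(1 + 4K\varepsilon) \leq 2\parallel y\parallel_X,
\end{equation*}
since $4K\varepsilon < 1$. In particular the sequence lies in the closed ball $\{x \in X : \parallel x\parallel_X \leq 2\varepsilon\}$, which will be needed for the uniqueness claim.

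Next I would show the sequence is Cauchy. Writing, by bilinearity,
\begin{equation*}
x_{n+1} - x_n = B(x_n - x_{n-1}, x_n) + B(x_{n-1}, x_n - x_{n-1}),
\end{equation*}
the hypothesis gives $\parallel x_{n+1} - x_n\parallel_X \leq K(\parallel x_n\parallel_X + \parallel x_{n-1}\parallel_X)\parallel x_n - x_{n-1}\parallel_X \leq 4K\varepsilon\parallel x_n - x_{n-1}\parallel_X$. Since $4K\varepsilon < 1$, iterating yields a geometric contraction, hence $\{x_n\}$ is Cauchy in $X$ and converges to some $x$ with $\parallel x\parallel_X \leq 2\parallel y\parallel_X$. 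The continuity of $B$ (immediate from the bilinear bound) permits passage to the limit in the recursion, producing $x = y + B(x,x)$.

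For uniqueness in $\{x : \parallel x\parallel_X \leq 2\varepsilon\}$, I would take another solution $\tilde{x}$ with $\parallel\tilde{x}\parallel_X \leq 2\varepsilon$ and use the identity $x - \tilde{x} = B(x - \tilde{x}, x) + B(\tilde{x}, x - \tilde{x})$ together with the bilinear bound to derive $\parallel x - \tilde{x}\parallel_X \leq 4K\varepsilon\parallel x - \tilde{x}\parallel_X$, forcing $x = \tilde{x}$. The continuous dependence estimate follows from the same decomposition applied to the two equations $x = y + B(x,x)$ and $\tilde{x} = \tilde{y} + B(\tilde{x},\tilde{x})$: one obtains $\parallel x - \tilde{x}\parallel_X \leq \parallel y - \tilde{y}\parallel_X + 4K\varepsilon\parallel x - \tilde{x}\parallel_X$, and isolating $\parallel x - \tilde{x}\parallel_X$ yields the factor $(1-4K\varepsilon)^{-1}$. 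The whole argument is standard Banach contraction and I do not anticipate any real obstacle; the only care required is bookkeeping of the constants so that the threshold $4K\varepsilon < 1$ is respected and the sharper bound $\parallel x\parallel_X \leq 2\parallel y\parallel_X$ (rather than merely $2\varepsilon$) propagates along the iteration.
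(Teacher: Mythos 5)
Your argument is correct in every detail: the induction giving $\parallel x_n\parallel_X\leq 2\parallel y\parallel_X$, the contraction estimate with factor $4K\varepsilon<1$, the passage to the limit via continuity of $B$, and the uniqueness and continuous-dependence bounds all check out, including the constant bookkeeping. The paper itself states this lemma without proof, citing Lemari\'e-Rieusset, and your Picard iteration is precisely the standard proof of that cited result, so there is nothing to add.
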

Let us observe that, if we write $y=S_{\Omega}^{\alpha}(t)u_{0}+\tilde{F}(t)$ and $B(v,w)$ as in
(\ref{bilinearterm}) then the integral equation (\ref{mildform}) have the form
$u=y+B(u,u)$ required in Lemma \ref{fixedpoint}. Finally we set two time-dependent spaces. Let $1\leq p\leq\infty$,
$0<T\leq\infty$ and $I=(0,T)$. The Banach spaces $L^{p}(I;\dot{FB}%
_{q,r}^{s})$ and $\mathcal{L}^{p}(I;\dot{FB}_{q,r}^{s})$ are the
set of Bochner measurable functions from $I$ to $\dot{FB}_{q,r}^{s}$
with respective norms given by

\begin{equation*}
\parallel f\parallel_{L^{p}(I;\dot{FB}_{q,r}^{s})}%
=
\parallel \parallel f(\cdot,t)\parallel_{\dot{FB}_{q,r}^{s}}\parallel_{L^{p}(I)}
=\parallel
\displaystyle{\left(\displaystyle{\sum_{j\in\mathbb{Z}}}2^{jsr}\parallel
\varphi_{j}\hat{f}\parallel_{q}^{r}\right)^{1/r}\parallel_{L^{p}(I)}}
\end{equation*}
and

\begin{equation*}
\parallel f\parallel_{\mathcal{L}^{p}(I;FB_{q,r}^{s}%
)}
=\left(\displaystyle{ \sum_{j\in\mathbb{Z}}}\,2^{jsr}%
\parallel\varphi_{j}\hat{f}\parallel_{L^{p}(I;L^{q})}^{r}\right)^{1/r}
=\left(\displaystyle{ \sum_{j\in\mathbb{Z}}}\,2^{jsr}\
\parallel\parallel\varphi_{j}\hat{f}\parallel_{q}\parallel
_{L^{p}(I)}^{r}\right)^{1/r}.
\end{equation*}
Throughout this work we shall use the notation

\begin{equation*}
\mathcal{X}=:\mathcal{L}^{\infty}(I;FB_{p,q}^{4-2\alpha-\frac{3}{p}}).
\end{equation*}

\section{Main Results}\label{main}
Here we establish a well-posedness result for the fractional Navier-Stokes-Coriolis system (FNSC) by the one-norm approach in the space ${\cal X}$.
\begin{teo}\label{teorema1}
Let be $1< p\leq \infty$, $1\leq q\leq\infty$, 
 $\frac{1}{2}<\alpha<\frac{1}{4}(5-\frac{3}{p})$ and let be $K$ the constant given in Proposition \ref{constantK}. There is a constant $0<\varepsilon<\frac{1}{4K}$ independent of the 
Coriolis parameter such that, for each $u_0\in \dot{FB}^{4-2\alpha-\frac{3}{p}}_{p,q}$ satisfying

\begin{equation*}
\parallel u_0\parallel_{\dot{FB}^{4-2\alpha-\frac{3}{p}}_{p,q}}+
\parallel F\parallel_{{\cal L}^{\infty}(I;\dot{FB}^{4-4\alpha-\frac{3}{p}}_{p,q})}
<\varepsilon,
\end{equation*}
there is a global mild solution for the  $(FNSC)$-system in the space  ${\cal X}={\cal L}^{\infty}(I;\dot{FB}^{4-2\alpha-\frac{3}{p}}_{p,q})$. Moreover, this solution is the unique global solution in the ball 
$\{ u\in{\cal X};\parallel u\parallel_{\cal X}\leq 2\varepsilon \}$ and these solutions depends continuosly on initial data and external forces, as indicated by the fixed point lemma (Lemma \ref{fixedpoint}).
\end{teo}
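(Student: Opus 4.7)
The plan is to reformulate (\ref{mildform}) as the abstract fixed-point problem $u = y + B(u,u)$ with $y = S^{\alpha}_{\Omega}(t)u_{0} + \tilde{F}(t)$ and $B$ given by (\ref{bilinearterm}), and then to invoke Lemma \ref{fixedpoint} in the space $\mathcal{X}$. The entire argument rests on three ingredients: a linear estimate for the semigroup, a maximal-regularity estimate for $\tilde{F}$, and a bilinear estimate for $B$, all with constants independent of $\Omega$.

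First, I would prove the pointwise symbol bound $|(S^{\alpha}_{\Omega})^{\wedge}(t,\xi)| \leq C e^{-\nu|\xi|^{2\alpha}t}$, which follows immediately from the explicit representation of $S^{\alpha}_{\Omega}$ since $\cos$, $\sin$ and the entries of $R(\xi)$ are uniformly bounded. Localizing to the dyadic annulus $|\xi|\sim 2^{j}$ through multiplication by $\varphi_{j}$ gives
\begin{equation*}
\|\varphi_{j}(S^{\alpha}_{\Omega}(t)u_{0})^{\wedge}\|_{L^{p}} \leq C\, e^{-c\nu 2^{2\alpha j}t}\|\varphi_{j}\hat{u}_{0}\|_{L^{p}},
\end{equation*}
so taking $\sup_{t>0}$ and the weighted $\ell^{q}$-norm in $j$ produces $\|S^{\alpha}_{\Omega}u_{0}\|_{\mathcal{X}} \leq C\|u_{0}\|_{\dot{FB}^{4-2\alpha-3/p}_{p,q}}$. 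For the forcing term the key elementary computation is $\int_{0}^{t}e^{-c\nu 2^{2\alpha j}(t-\tau)}d\tau \leq C2^{-2\alpha j}$, which turns the Duhamel integral into a gain of $2\alpha$ derivatives and delivers $\|\tilde{F}\|_{\mathcal{X}} \leq C\|F\|_{\mathcal{L}^{\infty}(I;\dot{FB}^{4-4\alpha-3/p}_{p,q})}$. Both constants are independent of $\Omega$ because the Coriolis contribution is purely oscillatory at the symbol level.

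The bilinear estimate is the main obstacle. Starting from (\ref{bilinearterm}) and localizing, one has
\begin{equation*}
\|\varphi_{j}[B(u,v)]^{\wedge}(t)\|_{L^{p}} \leq C\,2^{j}\int_{0}^{t}e^{-c\nu 2^{2\alpha j}(t-\tau)}\,\|\varphi_{j}(u\otimes v)^{\wedge}(\tau)\|_{L^{p}}\,d\tau.
\end{equation*}
Here the factor $2^{j}$ comes from the divergence, and $\mathbb{P}^{\wedge}(\xi)$ disappears because it is bounded. I would then decompose the product componentwise via the Bony formula (\ref{paraproduct}) into $T_{u}v + T_{v}u + R(u,v)$, treat each piece using the Bernstein-type inequality of Lemma 2.2 together with the support properties $\Delta_{j}\Delta_{k}f = 0$ for $|j-k|\geq 2$ and $\Delta_{j}(S_{k-1}f\,\Delta_{k}g) = 0$ for $|j-k|\geq 5$, and bring the temporal $\sup$ inside the sum via the $\mathcal{L}^{\infty}$-structure. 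Combining these with the elementary time integral above produces, for each paraproduct piece, an overall factor of $2^{j(1-2\alpha)}$ matching the regularity budget between the two copies of $\dot{FB}^{4-2\alpha-3/p}_{p,q}$ and the target norm. The summability in $j$ of the high–low, low–high and high–high pieces is precisely what forces $\frac{1}{2} < \alpha < \frac{1}{4}(5-3/p)$: the lower bound ensures that the time integral converges for the worst-placed frequency interaction, while the upper bound comes from the embedding/Bernstein transitions required to close the high–high remainder term in $L^{p}$.

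With these three pieces in hand, I obtain $\|y\|_{\mathcal{X}} \leq C(\|u_{0}\|_{\dot{FB}^{4-2\alpha-3/p}_{p,q}} + \|F\|_{\mathcal{L}^{\infty}(I;\dot{FB}^{4-4\alpha-3/p}_{p,q})}) < C\varepsilon$ and $\|B(u,v)\|_{\mathcal{X}} \leq K\|u\|_{\mathcal{X}}\|v\|_{\mathcal{X}}$, with $K$ independent of $\Omega$. Choosing $\varepsilon < \frac{1}{4K}$ sufficiently small so that the product $C\varepsilon$ still lies below the threshold of Lemma \ref{fixedpoint}, that lemma immediately yields a unique mild solution in the ball $\{u\in\mathcal{X}:\|u\|_{\mathcal{X}}\leq 2\varepsilon\}$ and its continuous dependence on $u_{0}$ and $F$. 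Since every constant appearing in the linear and bilinear estimates is independent of the rotation parameter, so is the admissible $\varepsilon$, completing the proof.
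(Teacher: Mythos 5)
Your proposal follows essentially the same route as the paper: the $\Omega$-independent symbol bound for $S^{\alpha}_{\Omega}$ (Lemma \ref{lemmasemFC}), the Duhamel estimate gaining $2\alpha$ derivatives from $\int_{0}^{t}e^{-\nu(t-\tau)2^{2\alpha(j-1)}}\,d\tau\lesssim 2^{-2\alpha j}/\nu$ (Lemma \ref{lemmawt}), the Bony-paraproduct bilinear estimate (Lemma \ref{productX} and Proposition \ref{constantK}), and finally the abstract fixed point lemma (Lemma \ref{fixedpoint}). The only small inaccuracy is your attribution of the lower bound $\alpha>\frac{1}{2}$ to the convergence of the time integral (which holds for any $\alpha>0$); in the paper it is needed so that the low-frequency sum $\sum_{k'<k-1}2^{k'(2\alpha-1)}$ arising in the paraproduct terms $I_{j}$ and $II_{j}$ converges.
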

The next theorem describes a kind of asymptotic behaviour of the system (FNSC), which applied to stationary solutions implies that non-stationary solutions of the (FNSC)-system tends toward the stationary ones, depending on the behaviour at infinity of the semigroup and the external force in Fourier-Besov norms. 

\begin{teo}\label{teorema2}
Under assumptions of Theorem \ref{teorema1} above we have the following asymptotic behavior of the system (FNSC). Let $u$ and $v$ be two solutions of the (FNSC)-system arising from Theorem \ref{teorema1},
 associated to the initial data $u_0$ and $v_0$ in $\dot{FB}^{4-2\alpha-\frac{3}{p}}_{p,q}$ and  external forces $F$ and $G$ 
 in ${\cal L}^{\infty}(I;\dot{FB}^{4-4\alpha-\frac{3}{p}}_{p,q})$, respectively.
 If
 
 \begin{equation}\label{F1}
 \displaystyle{\lim_{t\rightarrow\infty}}
 \parallel S^{\alpha}_{\Omega}(t)(u_0-v_0)
 \parallel_{\dot{FB}^{4-2\alpha-\frac{3}{p}}_{p,q}}=0\,\,\mbox{and}\,\,
   \displaystyle{\lim_{t\rightarrow\infty}}
 \parallel F(t)-G(t)
  \parallel_{\dot{FB}^{4-4\alpha-\frac{3}{p}}_{p,q}}=0,
 \end{equation}
 then 
 
 \begin{equation*}
  \displaystyle{\lim_{t\rightarrow\infty}}
  \parallel u(\cdot,t)-v(\cdot,t)
  \parallel_{\dot{FB}^{4-4\alpha-\frac{3}{p}}_{p,q}}=0.
 \end{equation*}
\end{teo}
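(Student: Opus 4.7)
The plan is to set $w:=u-v$, derive an integral equation for $w$, and pass to $\limsup_{t\to\infty}$ of $\|w(t)\|_{\dot{FB}^{4-4\alpha-3/p}_{p,q}}$, absorbing the bilinear contribution into a contraction factor strictly less than one.

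\textbf{Step 1 (Equation for $w$).} Subtracting the mild formulations of $u$ and $v$ and using $u\otimes u - v\otimes v = w\otimes u + v\otimes w$ yields
\begin{equation*}
w(t)=S^{\alpha}_{\Omega}(t)(u_0-v_0)-B(w,u)(t)-B(v,w)(t)+\int_0^t S^{\alpha}_{\Omega}(t-\tau)\mathbb{P}(F-G)(\tau)\,d\tau.
\end{equation*}
The first term tends to $0$ as $t\to\infty$ in the target norm: its $\dot{FB}^{4-4\alpha-3/p}_{p,q}$-decay follows from the $\dot{FB}^{4-2\alpha-3/p}_{p,q}$-decay assumed in (F1), via the parabolic gain of $e^{-\nu|\xi|^{2\alpha}t}$ in the semigroup symbol on each dyadic block.

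\textbf{Step 2 (Time splitting).} Fix $T_0>0$ and for $t>T_0$ split $\int_0^t=\int_0^{T_0}+\int_{T_0}^t$.

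\emph{Head pieces.} Since $R(\xi)$ is bounded and $|\cos|,|\sin|\le 1$, the Fourier symbol of $S^{\alpha}_{\Omega}(t-\tau)$ satisfies $|\widehat{S^\alpha_\Omega(t-\tau)f}(\xi)|\le C e^{-\nu|\xi|^{2\alpha}(t-\tau)}|\hat f(\xi)|$. On the $j$-th dyadic block this produces a factor $e^{-c2^{2\alpha j}(t-\tau)}$ that tends to $0$ as $t\to\infty$, uniformly in $\tau\in[0,T_0]$. A double dominated-convergence argument (in $j\in\mathbb{Z}$ and in $\tau\in[0,T_0]$) then shows that both head pieces
\begin{equation*}
\int_0^{T_0}\!S^\alpha_\Omega(t-\tau)\mathbb{P}\nabla\cdot(w\otimes u+v\otimes w)(\tau)\,d\tau\quad\text{and}\quad\int_0^{T_0}\!S^\alpha_\Omega(t-\tau)\mathbb{P}(F-G)(\tau)\,d\tau
\end{equation*}
vanish in $\dot{FB}^{4-4\alpha-3/p}_{p,q}$ as $t\to\infty$, with $T_0$ fixed.

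\emph{Tail pieces.} By applying the bilinear estimate underlying Proposition \ref{constantK} to the time-truncated profiles $w_{T_0}(\tau):=w(\tau)\mathbf{1}_{\tau>T_0}$ and $u$ (respectively $v$ and $w_{T_0}$), together with $\|u\|_\mathcal{X},\|v\|_\mathcal{X}\le 2\varepsilon$, we get
\begin{equation*}
\sup_{t>T_0}\left\|\int_{T_0}^t S^\alpha_\Omega(t-\tau)\mathbb{P}\nabla\cdot(w\otimes u+v\otimes w)(\tau)\,d\tau\right\|_{\dot{FB}^{4-4\alpha-3/p}_{p,q}}\le 4K\varepsilon\sup_{\tau>T_0}\|w(\tau)\|_{\dot{FB}^{4-2\alpha-3/p}_{p,q}}.
\end{equation*}
The force tail is bounded by the corresponding linear estimate by $C\sup_{\tau>T_0}\|(F-G)(\tau)\|_{\dot{FB}^{4-4\alpha-3/p}_{p,q}}$, which tends to $0$ as $T_0\to\infty$ by (F1).

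\textbf{Step 3 (Closing the argument).} Let $L:=\limsup_{t\to\infty}\|w(t)\|_{\dot{FB}^{4-4\alpha-3/p}_{p,q}}$. Letting first $t\to\infty$ (so that the linear term and the two head pieces vanish) and then $T_0\to\infty$ (so that the force tail vanishes), we obtain
\begin{equation*}
L\le 4K\varepsilon\,L.
\end{equation*}
Since $4K\varepsilon<1$ from the choice of $\varepsilon$ in Theorem \ref{teorema1}, this forces $L=0$, which is the claimed asymptotic behavior.

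The main obstacle will be the rigorous justification of the head-piece decay: one must combine the pointwise-in-frequency exponential decay coming from the dissipative factor $e^{-\nu|\xi|^{2\alpha}(t-\tau)}$ with an integrable dominating function on the dyadic decomposition and push a double dominated-convergence argument through both the $j\in\mathbb{Z}$ sum and the $\tau\in[0,T_0]$ integral. Everything else is bilinear/linear-estimate bookkeeping, in the same spirit as the asymptotic-stability arguments of Cannone--Karch for the ${\cal PM}^{2}$ setting.
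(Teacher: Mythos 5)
Your overall strategy (reduce to a contraction inequality for $\limsup_{t\to\infty}$ of the difference) is the right one and matches the paper's, but your decomposition is genuinely different -- you split the Duhamel integral at a \emph{fixed} time $T_0$ and send $t\to\infty$ first, whereas the paper splits proportionally at $\delta t$ -- and it is precisely in your tail estimate that a real gap appears. Proposition \ref{constantK} is an estimate in $\mathcal{X}=\mathcal{L}^{\infty}(I;\dot{FB}^{4-2\alpha-\frac{3}{p}}_{p,q})$, where the supremum in time sits \emph{inside} the $\ell^{q}$ sum over dyadic blocks. Applying it to $w_{T_0}=w\,\mathbf{1}_{\tau>T_0}$ therefore yields a bound by $4K\varepsilon\,\|w\|_{\mathcal{L}^{\infty}(\tau>T_0;\dot{FB}^{4-2\alpha-3/p}_{p,q})}$, \emph{not} by $4K\varepsilon\,\sup_{\tau>T_0}\|w(\tau)\|_{\dot{FB}^{4-2\alpha-3/p}_{p,q}}$ as you claim; for $q<\infty$ the first quantity dominates the second and can be strictly larger (a single active block wandering to higher $j$ as $\tau$ grows makes the $L^{\infty}$-in-time norm small while the $\mathcal{L}^{\infty}$ norm stays large). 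Consequently, after letting $T_0\to\infty$ you only obtain $L\leq 4K\varepsilon\,\lim_{T_0\to\infty}\|w\|_{\mathcal{L}^{\infty}(\tau>T_0)}$, and this limit need not coincide with $L$, so the inequality $L\leq 4K\varepsilon L$ does not close. (There is also a norm mismatch inherited from the statement: you define $L$ with the exponent $4-4\alpha-\frac{3}{p}$ but your tail bound involves the $4-2\alpha-\frac{3}{p}$ norm of $w$; the paper's own proof, and the natural reading of the theorem, place everything in $\dot{FB}^{4-2\alpha-\frac{3}{p}}_{p,q}$.)

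To repair this one must get the \emph{fixed-time} quantity $\|w(\tau)\|_{\dot{FB}^{4-2\alpha-3/p}_{p,q}}$ inside the time integral before summing over blocks, which is what the paper does: it uses the fixed-time product estimate $\|w_1\otimes w_2\|_{\dot{FB}^{5-4\alpha-3/p}_{p,q}}\leq C\|w_1\|_{\dot{FB}^{4-2\alpha-3/p}_{p,q}}\|w_2\|_{\dot{FB}^{4-2\alpha-3/p}_{p,q}}$ to arrive at terms of the form $\int \sup_{\xi}|\xi|^{2\alpha}e^{-\nu(t-\tau)|\xi|^{2\alpha}}\|w(\tau)\|\,d\tau$. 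This forces the proportional split at $\delta t$: on $[0,\delta t]$ the kernel is controlled by $(\nu(1-s)t)^{-1}e^{-1}$ after the substitution $\tau=ts$, producing the factor $\log\frac{1}{1-\delta}$, while on $[\delta t,t]$ one pulls out $\sup_{\delta t\leq\tau\leq t}\|w(\tau)\|$ and integrates the kernel in $\tau$ first (a fixed split at $T_0$ would leave the non-integrable singularity $\int_{T_0}^{t}(t-\tau)^{-1}d\tau$). The resulting inequality $A\leq 4\varepsilon KC\left(e^{-1}\log\frac{1}{1-\delta}+1\right)A$ is then closed by shrinking $\varepsilon$ to $\min\{\frac{1}{4K},\frac{1}{8KC}\}$ and taking $\delta$ small -- note that, unlike in your argument, the bare condition $4K\varepsilon<1$ from Theorem \ref{teorema1} is not by itself enough. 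Your head-piece analysis is essentially sound (though for $q=\infty$ one should replace dominated convergence by the uniform bound $\sup_{j}2^{2\alpha j}e^{-c2^{2\alpha j}(t-T_0)}\leq C(t-T_0)^{-1}$), but the tail is where the theorem is actually proved, and as written it does not go through.
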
 
Next theorem it is an existence result of stationary solutions for the system (SFNSC) by the one-norm approach, depending of the smallness of the external force $F$. 
\begin{teo}\label{teorema3}
Let be $1< p\leq \infty$, $1\leq q\leq\infty$, 
 $\frac{1}{2}<\alpha<\frac{1}{4}(5-\frac{3}{p})$. There exists two constant $C>0$  and $\varepsilon>0$ such that for a given external force $F\in \dot{FB}^{4-4\alpha-\frac{3}{p}}_{p,q}$ such
that $\parallel F\parallel_{\dot{FB}^{4-4\alpha-\frac{3}{p}}_{p,q}}<\frac{\varepsilon \nu}{C}$
there is a stationary solution for the (SFNSC)-system in the space $\dot{FB}^{4-2\alpha-\frac{3}{p}}_{p,q}$ with external force $F$.
Moreover, this solution is unique in the ball $\{ u\in \dot{FB}^{4-2\alpha-\frac{3}{p}}_{p,q};
\parallel u\parallel_{\dot{FB}^{4-2\alpha-\frac{3}{p}}_{p,q}} \leq 2\varepsilon\}$.
\end{teo}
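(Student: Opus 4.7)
The plan is to recast the stationary problem as a fixed-point equation $u = y + B(u,u)$ in the Banach space $X := \dot{FB}^{4-2\alpha-\frac{3}{p}}_{p,q}$ and to invoke the abstract fixed-point lemma (Lemma \ref{fixedpoint}). By Lemma \ref{equivStationary}, a stationary mild solution must satisfy, in Fourier variables,
\begin{equation*}
u = \int_0^\infty S_\Omega^\alpha(\tau)\mathbb{P}F\,d\tau - \int_0^\infty S_\Omega^\alpha(\tau)\mathbb{P}\nabla\cdot(u\otimes u)\,d\tau,
\end{equation*}
so I would set $y := \int_0^\infty S_\Omega^\alpha(\tau)\mathbb{P}F\,d\tau$ and $B(u,v) := -\int_0^\infty S_\Omega^\alpha(\tau)\mathbb{P}\nabla\cdot(u\otimes v)\,d\tau$.

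First I would establish the linear bound $\|y\|_X \leq (C/\nu)\,\|F\|_{\dot{FB}^{4-4\alpha-\frac{3}{p}}_{p,q}}$. In Fourier variables the symbol of $S_\Omega^\alpha(\tau)\mathbb{P}$ is a sum of bounded matrices times $e^{-\nu|\xi|^{2\alpha}\tau}$, and hence
\begin{equation*}
\left|\widehat{y}(\xi)\right| \leq \left(\int_0^\infty e^{-\nu|\xi|^{2\alpha}\tau}\,d\tau\right)|\widehat{F}(\xi)| = \frac{|\widehat{F}(\xi)|}{\nu\,|\xi|^{2\alpha}}.
\end{equation*}
Multiplying by $\varphi_j(\xi)$ (so that $|\xi|\sim 2^j$ on its support) and computing the $\dot{FB}^{4-2\alpha-\frac{3}{p}}_{p,q}$-norm gives precisely the claimed shift of the regularity index by $2\alpha$, together with the factor $1/\nu$.

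Second, I would prove the bilinear estimate $\|B(u,v)\|_X \leq K \|u\|_X \|v\|_X$ with the same $K$ as in Proposition \ref{constantK}. Using Bony's paraproduct decomposition $u\otimes v = T_u v + T_v u + R(u,v)$, applying $\varphi_j$, and noting that $\nabla\cdot$ contributes a factor comparable to $2^j$ inside the semigroup representation, one finds that each Littlewood-Paley block is controlled by the same dyadic sums that appear in the proof of Theorem \ref{teorema1}; the only difference is that the time supremum is replaced by the pointwise-in-$\xi$ bound $\int_0^\infty e^{-\nu|\xi|^{2\alpha}\tau}\,d\tau \leq (\nu 2^{2\alpha j})^{-1}$. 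The hypotheses $\alpha>\tfrac{1}{2}$ and $\alpha<\tfrac{1}{4}(5-\tfrac{3}{p})$ are exactly those that guarantee absolute convergence of the geometric series obtained for the low-high, high-low, and remainder pieces after the usual application of the Bernstein-type inequality.

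With both estimates in hand, requiring $\|F\|_{\dot{FB}^{4-4\alpha-\frac{3}{p}}_{p,q}}<\varepsilon\nu/C$ yields $\|y\|_X\leq\varepsilon$, and for $0<\varepsilon<1/(4K)$ the fixed-point Lemma \ref{fixedpoint} produces a unique solution $u\in X$ in the closed ball of radius $2\varepsilon$, completing the proof. The main obstacle is the bilinear estimate: one must check that the dyadic paraproduct sums used in Theorem \ref{teorema1} still converge when the time integral runs over $(0,\infty)$ instead of $(0,T)$ under a supremum, and this is precisely the place where the upper bound on $\alpha$ is consumed, since the $\tau$-integration acts purely pointwise in Fourier variables and contributes only the scaling-critical factor $(\nu 2^{2\alpha j})^{-1}$.
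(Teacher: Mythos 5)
Your proposal is correct and follows essentially the same route as the paper: Lemma \ref{equivStationary} to reduce to the integral equation on $(0,\infty)$, the pointwise-in-$\xi$ bound $\int_0^\infty e^{-\nu\mid\xi\mid^{2\alpha}\tau}\,d\tau=\nu^{-1}\mid\xi\mid^{-2\alpha}$ for the linear term giving $\parallel y\parallel\leq \frac{C}{\nu}\parallel F\parallel_{\dot{FB}^{4-4\alpha-\frac{3}{p}}_{p,q}}$, the paraproduct estimate of Lemma \ref{productX} for the bilinear term, and then Lemma \ref{fixedpoint}. The only cosmetic difference is that the paper records the $\tau$-integral of the semigroup symbol in closed form before bounding it, whereas you bound the integrand directly; the conclusion is identical.
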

Next result establish that for any external force, it is possible to consider a large enough Coriolis parameter in order to obtain a unique stationary solution for the (SFNSC)-system.
\begin{teo}\label{teorema4}
Let $1<p<\infty$ and $\frac{1}{2}<\alpha< \frac{1}{4}(5-\frac{3}{p})$. If we consider an external force $F\in \dot{FB}^{4-4\alpha-\frac{3}{p}}_{p,p}$ then there exist $\Omega_0$ such that, for all Coriolis parameter $\Omega\in \mathbb{R}$ with $\mid \Omega\mid \geq \Omega_0$, there is a solution $u\in \dot{FB}^{4-2\alpha-\frac{3}{p}}_{p,p}(\mathbb{R}^{3})$ for the (SFNSC)-system that is the unique solution such that $\parallel u\parallel_{\dot{FB}^{4-2\alpha-\frac{3}{p}}_{p,p}(\mathbb{R}^{3})}\leq 2\varepsilon$.
\end{teo}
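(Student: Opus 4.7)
The plan is to reformulate the stationary problem via Lemma \ref{equivStationary} as a fixed-point equation $u=y_{\Omega}+B(u,u)$ in the Banach space $\dot{FB}^{4-2\alpha-\frac{3}{p}}_{p,p}$, where
\[
\hat{y}_{\Omega}(\xi)=\int_{0}^{\infty}\bigl(S^{\alpha}_{\Omega}\bigr)^{\wedge}(\tau,\xi)\,\hat{\mathbb{P}}(\xi)\,\hat{F}(\xi)\,d\tau,
\]
and then invoke the abstract fixed-point Lemma \ref{fixedpoint}. The bilinear estimate $\|B(u,u)\|_{\dot{FB}^{4-2\alpha-\frac{3}{p}}_{p,p}}\leq K\,\|u\|^{2}$, with a constant $K$ that is \emph{independent} of $\Omega$, will be inherited from the proof of Theorem \ref{teorema3} (the stationary case is just the time-independent specialization of the bilinear estimate in $\mathcal{X}$, which holds because $|(S^{\alpha}_{\Omega})^{\wedge}(t,\xi)|\leq e^{-\nu|\xi|^{2\alpha}t}$ uniformly in $\Omega$). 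The key point, therefore, is to make the linear (forcing) contribution $\|y_{\Omega}\|_{\dot{FB}^{4-2\alpha-\frac{3}{p}}_{p,p}}$ smaller than the fixed-point threshold $\varepsilon<\tfrac{1}{4K}$ by choosing $|\Omega|$ large.

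The central ingredient is an explicit Fourier computation. Using the semigroup formula, $\tau\mapsto(S^{\alpha}_{\Omega})^{\wedge}(\tau,\xi)$ contains the oscillatory factors $\cos\!\bigl(\Omega\tfrac{\xi_{3}}{|\xi|}\tau\bigr)$ and $\sin\!\bigl(\Omega\tfrac{\xi_{3}}{|\xi|}\tau\bigr)$ multiplied by $e^{-\nu|\xi|^{2\alpha}\tau}$. Setting $a=\nu|\xi|^{2\alpha}$ and $b=\Omega\xi_{3}/|\xi|$ and evaluating the standard integrals $\int_{0}^{\infty}e^{-a\tau}\cos(b\tau)\,d\tau=a/(a^{2}+b^{2})$ and $\int_{0}^{\infty}e^{-a\tau}\sin(b\tau)\,d\tau=b/(a^{2}+b^{2})$, one obtains the pointwise estimate
\[
|\hat{y}_{\Omega}(\xi)|\;\leq\;\frac{C\,|\hat{F}(\xi)|}{\sqrt{\nu^{2}|\xi|^{4\alpha}+\Omega^{2}\xi_{3}^{2}/|\xi|^{2}}}.
\]
This gives two simultaneous bounds: first, a uniform bound $|\xi|^{4-2\alpha-\frac{3}{p}}|\hat{y}_{\Omega}(\xi)|\leq\tfrac{1}{\nu}\,|\xi|^{4-4\alpha-\frac{3}{p}}|\hat{F}(\xi)|$, whose $L^{p}$-norm is $\lesssim \nu^{-1}\|F\|_{\dot{FB}^{4-4\alpha-\frac{3}{p}}_{p,p}}$ thanks to the equivalent integral norm available when $p=q$; and second, pointwise convergence $|\hat{y}_{\Omega}(\xi)|\to 0$ as $|\Omega|\to\infty$ for every $\xi$ with $\xi_{3}\neq 0$, i.e.\ a.e.

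Combining these, the dominated convergence theorem yields
\[
\lim_{|\Omega|\to\infty}\|y_{\Omega}\|_{\dot{FB}^{4-2\alpha-\frac{3}{p}}_{p,p}}=0.
\]
Pick $\Omega_{0}$ so large that $\|y_{\Omega}\|_{\dot{FB}^{4-2\alpha-\frac{3}{p}}_{p,p}}<\varepsilon$ for all $|\Omega|\geq\Omega_{0}$; Lemma \ref{fixedpoint} then produces a unique solution $u$ of $u=y_{\Omega}+B(u,u)$ in the ball of radius $2\varepsilon$, which by Lemma \ref{equivStationary} is a stationary mild solution of (SFNSC).

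The main obstacle is the dominated convergence step: the natural uniform pointwise dominant $\tfrac{1}{\nu}|\xi|^{4-4\alpha-\frac{3}{p}}|\hat{F}(\xi)|$ has its $p$-th power integrable by hypothesis only when one can realize the $\dot{FB}^{s}_{p,p}$-norm as an honest weighted $L^{p}(d\xi)$-norm, which forces the restriction $p=q$ that appears in the statement (and which explains why $q$ has been dropped from Theorem \ref{teorema4}). One must also handle the matrix structure: the factor $R(\xi)$ and the Leray projector $\hat{\mathbb{P}}(\xi)$ are bounded by a constant independent of $\xi$ and $\Omega$, so they only affect the constant $C$. The restriction $1<p<\infty$ is what one needs for the boundedness of $\mathbb{P}$ and the paraproduct/bilinear estimate to go through with $K$ independent of $\Omega$; the range $\tfrac{1}{2}<\alpha<\tfrac{1}{4}(5-\tfrac{3}{p})$ is exactly what ensures the scaling-critical bilinear bound in $\dot{FB}^{4-2\alpha-\frac{3}{p}}_{p,p}$ used already in Theorems \ref{teorema1} and \ref{teorema3}.
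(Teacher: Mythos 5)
Your proposal is correct, and it reaches the key smallness of the forcing term by a genuinely different (and shorter) route than the paper. Both arguments start from the same reduction: by Lemma \ref{equivStationary} and the scheme of Theorem \ref{teorema3}, everything hinges on making $\parallel y_{\Omega}\parallel_{\dot{FB}^{4-2\alpha-\frac{3}{p}}_{p,p}}<\varepsilon$ for $\mid\Omega\mid$ large, and both exploit the same explicit evaluation $\int_{0}^{\infty}e^{-a\tau}\cos(b\tau)\,d\tau=a/(a^{2}+b^{2})$, $\int_{0}^{\infty}e^{-a\tau}\sin(b\tau)\,d\tau=b/(a^{2}+b^{2})$ with $a=\nu\mid\xi\mid^{2\alpha}$, $b=\Omega\xi_{3}/\mid\xi\mid$, which produces the denominator $\nu^{2}\mid\xi\mid^{4\alpha+2}+\Omega^{2}\xi_{3}^{2}$. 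Where you diverge is in how the limit $\mid\Omega\mid\to\infty$ is handled: you observe that $\mid\xi\mid^{4-2\alpha-\frac{3}{p}}\mid\hat{y}_{\Omega}(\xi)\mid$ is dominated uniformly in $\Omega$ by $\nu^{-1}\mid\xi\mid^{4-4\alpha-\frac{3}{p}}\mid\hat{F}(\xi)\mid$, which is $p$-integrable precisely because $p=q$ turns the Fourier--Besov norm into a weighted $L^{p}(d\xi)$ norm, and that the integrand vanishes pointwise for a.e.\ $\xi$ (namely $\xi_{3}\neq 0$); dominated convergence then finishes the job. The paper instead follows Konieczny--Yoneda and carries out a hands-on version of this limit: it first isolates a compact set $K$ carrying most of the norm of $F$ (estimate (\ref{HH1})), splits $K$ against the regions $A_{\delta},B_{\delta},C_{\delta}$, kills $K\cap(B_{\delta}\cup C_{\delta})$ by shrinking $\delta$ (estimate (\ref{HH2})), and on $K\cap A_{\delta}$ bounds the weight by $\nu(1/\delta)^{4\alpha+2}/(\nu^{2}\delta^{4\alpha+2}+\Omega^{2}\delta^{2})$, which is made small by taking $\Omega$ large (estimates (\ref{HH3})--(\ref{HH4})). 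Your DCT argument is cleaner and fully rigorous for $1<p<\infty$, $q=p$; what the paper's explicit decomposition buys is (i) a quantitative $\Omega_{0}=\Omega_{0}(\varepsilon,\delta,\parallel F\parallel)$ rather than a mere existence statement, and (ii) an argument that survives in the endpoint case $q=\infty$, where the sum over dyadic blocks is replaced by a supremum and dominated convergence in the block index is unavailable without the extra hypothesis (\ref{condicionF}). Since Theorem \ref{teorema4} only asserts the existence of $\Omega_{0}$ and is stated for $p=q$, your argument fully proves it.
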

\begin{obs}
Under a suitable assumption on $F$ a similar result it is valid for the external force $F\in \dot{FB}^{4-4\alpha-\frac{3}{p}}_{p,\infty}$ (see condition \ref{condicionF} below).
\end{obs}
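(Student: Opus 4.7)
The plan is to realize the stationary solution as a fixed point of the integral equation of Lemma \ref{equivStationary}(ii), writing $u = y + B(u,u)$ with
\[
y := \int_{0}^{\infty} S^{\alpha}_{\Omega}(\tau)\,\mathbb{P} F\,d\tau,
\qquad
B(u,u) := -\int_{0}^{\infty} S^{\alpha}_{\Omega}(\tau)\,\mathbb{P}\nabla\cdot(u\otimes u)\,d\tau,
\]
and applying Lemma \ref{fixedpoint} in $X=\dot{FB}^{4-2\alpha-3/p}_{p,p}$. The bilinear estimate $\parallel B(u,v)\parallel_{X} \leq K\parallel u\parallel_{X}\parallel v\parallel_{X}$ with $K$ independent of $\Omega$ is the same paraproduct/Bony estimate already used for Theorem \ref{teorema3}. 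Since $F$ is now arbitrary, the sole new task is to show that $\parallel y\parallel_{X}<\varepsilon<1/(4K)$ for all $|\Omega|\geq \Omega_{0}$.

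The idea is to exploit the equivalent norm
\[
\parallel f\parallel_{\dot{FB}^{s}_{p,p}} \sim \Bigl(\int_{\mathbb{R}^{3}} |\xi|^{sp}|\hat{f}(\xi)|^{p}\,d\xi\Bigr)^{1/p}
\]
together with the explicit Fourier symbol of $S^{\alpha}_{\Omega}$. Evaluating the elementary integrals
\[
\int_{0}^{\infty}\cos\bigl(\Omega\tfrac{\xi_{3}}{|\xi|}\tau\bigr)e^{-\nu|\xi|^{2\alpha}\tau}\,d\tau = \frac{\nu|\xi|^{2\alpha}}{\nu^{2}|\xi|^{4\alpha}+\Omega^{2}\xi_{3}^{2}/|\xi|^{2}},
\quad
\int_{0}^{\infty}\sin\bigl(\Omega\tfrac{\xi_{3}}{|\xi|}\tau\bigr)e^{-\nu|\xi|^{2\alpha}\tau}\,d\tau = \frac{\Omega\xi_{3}/|\xi|}{\nu^{2}|\xi|^{4\alpha}+\Omega^{2}\xi_{3}^{2}/|\xi|^{2}},
\]
and using the boundedness of $R(\xi)$ and $\hat{\mathbb{P}}(\xi)$ produces the pointwise bound
\[
|\xi|^{4-2\alpha-3/p}|\hat{y}(\xi)| \leq C\,M_{\Omega}(\xi)\,|\xi|^{4-4\alpha-3/p}|\hat{F}(\xi)|,
\qquad
M_{\Omega}(\xi):= |\xi|^{2\alpha}\,\frac{\nu|\xi|^{2\alpha}+|\Omega\xi_{3}|/|\xi|}{\nu^{2}|\xi|^{4\alpha}+\Omega^{2}\xi_{3}^{2}/|\xi|^{2}}.
\]

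The key analytical step is the double claim that $M_{\Omega}(\xi)\leq 2/\nu$ uniformly in $(\xi,\Omega)$ and that $M_{\Omega}(\xi)\to 0$ as $|\Omega|\to\infty$ for a.e.\ $\xi\in\mathbb{R}^{3}$. The uniform bound follows by splitting into the two cases according to whether $\nu|\xi|^{2\alpha}$ or $|\Omega\xi_{3}|/|\xi|$ dominates in the denominator; in each case the dominating term in the denominator cancels its counterpart in the numerator. The pointwise limit holds on the full-measure set $\{\xi_{3}\neq 0\}$ since, once $|\Omega|$ is large, the second case applies and yields $M_{\Omega}(\xi)\lesssim |\xi|^{2\alpha+1}/(|\Omega||\xi_{3}|)\to 0$. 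Because $F\in \dot{FB}^{4-4\alpha-3/p}_{p,p}$ with $1<p<\infty$, the function $|\xi|^{4-4\alpha-3/p}|\hat{F}(\xi)|$ belongs to $L^{p}(\mathbb{R}^{3},d\xi)$, so the dominated convergence theorem delivers $\parallel y\parallel_{\dot{FB}^{4-2\alpha-3/p}_{p,p}}\to 0$ as $|\Omega|\to\infty$. Choosing $\Omega_{0}$ so that $|\Omega|\geq \Omega_{0}$ forces $\parallel y\parallel_{X}<\varepsilon$, Lemma \ref{fixedpoint} then produces the unique stationary solution with $\parallel u\parallel_{X}\leq 2\varepsilon$.

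The hard part is precisely the uniform domination $M_{\Omega}\leq 2/\nu$, which is what enables the dominated convergence argument. This step is also what forces the assumption $q=p$ with $1<p<\infty$: the equivalent-norm representation as a genuine $L^{p}$-integral of $\hat{F}$ is essential for invoking DCT, and this is exactly why the $q=\infty$ analogue mentioned in the subsequent Remark requires an additional structural hypothesis on $F$ (the condition referred to as \ref{condicionF}).
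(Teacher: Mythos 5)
Your proposal is a correct argument, but for the wrong statement: what you prove is Theorem \ref{teorema4} itself (the case $q=p$, $1<p<\infty$), via a dominated-convergence route that is genuinely different from the paper's proof of that theorem (the paper fixes a compact set $K$ capturing most of $\| \,|\xi|^{4-4\alpha-\frac{3}{p}}\hat{F}\,\|_{L^{p}}$, decomposes $\mathbb{R}^{3}=A_{\delta}\cup B_{\delta}\cup C_{\delta}$, and sends $|\Omega|\to\infty$ only on $K\cap A_{\delta}$, whereas your uniform bound $M_{\Omega}\leq \frac{3}{2\nu}$ plus a.e.\ vanishing on $\{\xi_{3}\neq 0\}$ dispenses with choosing $K$ and $\delta$ altogether). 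The statement under review, however, is the remark asserting the analogue for $F\in\dot{FB}^{4-4\alpha-\frac{3}{p}}_{p,\infty}$ under condition (\ref{condicionF}), and your final paragraph explicitly defers that case rather than proving it. Concretely missing are both halves of the remark's content: (i) the identification of the ``suitable assumption'' --- the existence of $J\in\mathbb{N}$ such that $\sup_{|j|>J}\left(\|\varphi_{j}w_{1}\hat{F}\|_{L^{p}}+\|\varphi_{j}w_{2}\hat{F}\|_{L^{p}}\right)$ is small --- and (ii) the derivation, under that assumption, of $\|F\|_{X^{\alpha,\Omega}_{p,\infty}}\leq\varepsilon$ for all $|\Omega|\geq\Omega_{0}$, which is what feeds the fixed-point lemma in $\dot{FB}^{4-2\alpha-\frac{3}{p}}_{p,\infty}$. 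The paper carries out (ii) by choosing a compact $K_{j}$ for each of the finitely many $|j|\leq J$, taking $K=\bigcup_{|j|\leq J}K_{j}$, running the $A_{\delta},B_{\delta},C_{\delta}$ decomposition with $\delta$ uniform in $j$, and invoking (\ref{condicionF}) for the tail $|j|>J$; none of this appears in your proposal.

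The omission is not cosmetic, because your DCT mechanism genuinely breaks for $q=\infty$ without the tail hypothesis: the $X^{\alpha,\Omega}_{p,\infty}$-norm is a supremum over blocks, and although each fixed block $\|\varphi_{j}w_{i}\hat{F}\|_{L^{p}}$ tends to $0$ as $|\Omega|\to\infty$ (per-block dominated convergence works, since $w_{1},w_{2}\leq \nu^{-1}|\xi|^{4-4\alpha-\frac{3}{p}}$ by (\ref{eq4.9})--(\ref{eq4.10}) and $\varphi_{j}|\xi|^{4-4\alpha-\frac{3}{p}}|\hat{F}|\in L^{p}$ for each $j$), a supremum of infinitely many quantities each tending to zero need not tend to zero; uniform smallness over $|j|>J$ is exactly what must be \emph{assumed}, and that is the role of (\ref{condicionF}). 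It is worth noting that your machinery, once this is recognized, closes the remark more economically than the paper does: per-block dominated convergence plus a maximum over the finitely many $|j|\leq J$, combined with (\ref{condicionF}) for $|j|>J$, yields (\ref{asterisco}) directly, with no compact sets $K_{j}$ and no $\delta$-decomposition. But as submitted, the remark itself is left unproven.
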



\section{Main estimates for the system in Fourier-Besov spaces  }\label{stationary}
In this section we provide a one-norm approach in order to establish the global existence of mild solutions for the fractional Navier-Stokes-Coriolis system in the context of Fourier-Besov spaces.
In this context we shall obtain the existence of stationary solutions for the considered system. 
Ideas for the proof of next lemmas are based on computations developed in (\cite{CannoneKarchSmoothorsingularNS}, 2004) and (\cite{KoniecznyYonedaDispersiveStationaryNSC}, 2011). 

We start with the persistence of regularity of the semigroup applied to any element in Fourier-Besov spaces and the time weak continuity of this map, looking as a path in the same Fourier-Besov space.
Let us recall that ${\cal X}={\cal L}^{\infty}\left(I;\dot{FB}^{4-2\alpha-\frac{3}{p}}_{p,q}\right)$, where $I$ denotes the interval $[0,\infty)$.
\begin{lema}\label{lemmasemFC}
If $u_0\in \dot{FB}^{4-2\alpha-\frac{3}{p}}_{p,q}$ then

\begin{equation*}
S^{\alpha}_{\Omega}(t)u_0\in \dot{FB}^{4-2\alpha-\frac{3}{p}}_{p,q},\,\,\mbox{for all}\,\,t>0.
\end{equation*}
Moreover $S^{\alpha}_{\Omega}(\cdot)u_0\in 
{\cal L}^{\infty}(I;\dot{FB}^{4-2\alpha-\frac{3}{p}}_{p,q})$ and for $1<p<+\infty$, $1<q<+\infty$ and $\alpha>\frac{1}{2}$ it is valid the weak convergence

\begin{equation*}
    S^{\alpha}_{\Omega}(t)u_0\rightharpoonup S^{\alpha}_{\Omega}(s)u_0\,\,\mbox{as}\,\,t\rightarrow s\,\,\mbox{for each}\,\, s\in I.
\end{equation*}
\end{lema}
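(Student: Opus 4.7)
The strategy relies entirely on the explicit Fourier-side formula for $S^{\alpha}_{\Omega}(t)$ together with the Littlewood--Paley definition of the $\dot{FB}$-norm. The key observation to exploit is the uniform pointwise bound
$$\bigl|\bigl(S^{\alpha}_{\Omega}(t)\bigr)^{\wedge}(\xi)\,v\bigr| \leq C\, e^{-\nu|\xi|^{2\alpha} t}\,|v|,$$
valid for every $t \geq 0$, every $\xi \neq 0$, and independently of $\Omega \in \mathbb{R}$. This is immediate from $|\cos|,|\sin|\le 1$ together with the fact that each entry of $R(\xi)$ is bounded in absolute value by $1$ uniformly in $\xi\neq 0$.

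For the persistence claim, I would multiply this pointwise bound by $\varphi_k(\xi)$, which yields
$$|\varphi_k(\xi)\widehat{S^{\alpha}_{\Omega}(t)u_0}(\xi)| \leq C\,|\varphi_k(\xi)\hat u_0(\xi)|$$
for every $k\in \mathbb{Z}$, every $t\ge 0$, and every $\xi$. Taking $L^p$-norm in $\xi$, weighting by $2^{k(4-2\alpha-3/p)}$ and applying the $\ell^q$-norm in $k$ gives $\|S^{\alpha}_{\Omega}(t)u_0\|_{\dot{FB}^{4-2\alpha-3/p}_{p,q}} \leq C\|u_0\|_{\dot{FB}^{4-2\alpha-3/p}_{p,q}}$ uniformly in $t$. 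The $\mathcal{L}^{\infty}$-statement then follows at once from the definition of $\mathcal{L}^{\infty}\!\left(I;\dot{FB}^{s}_{p,q}\right)$, since the pointwise bound allows the supremum in $t$ to be taken inside the $L^p$-norm on the Fourier side before applying the weighted $\ell^q$-sum.

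For the weak continuity, I would use that under $1<p,q<\infty$ the space $\dot{FB}^{s}_{p,q}$ is reflexive: the map $u\mapsto (2^{ks}\varphi_k\hat u)_{k\in\mathbb{Z}}$ is an isometric embedding into the reflexive space $\ell^q(\mathbb{Z}; L^p(\mathbb{R}^3))$, and its dual is the corresponding $\ell^{q'}(L^{p'})$. So proving weak convergence reduces to checking that for every $g=(g_k)_k\in\ell^{q'}(\mathbb{Z};L^{p'}(\mathbb{R}^3))$,
$$\sum_{k\in\mathbb{Z}}\int_{\mathbb{R}^{3}}\! g_k(\xi)\,2^{ks}\varphi_k(\xi)\!\left[\widehat{S^{\alpha}_{\Omega}(t)u_0}(\xi)-\widehat{S^{\alpha}_{\Omega}(s)u_0}(\xi)\right]\,d\xi \longrightarrow 0 \quad\text{as }t\to s.$$
For fixed $k$, the symbol is continuous in $t$ and dominated by $C|\varphi_k\hat u_0|\in L^{p}$; the classical Lebesgue dominated convergence theorem gives $\varphi_k\widehat{S^{\alpha}_{\Omega}(t)u_0}\to \varphi_k\widehat{S^{\alpha}_{\Omega}(s)u_0}$ strongly in $L^{p}$ as $t\to s$. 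Then the $k$-sum is dominated, termwise, by $2\|g_k\|_{p'}\cdot 2^{ks}\|\varphi_k\hat u_0\|_{p}$, whose sum over $k$ is finite by Hölder's inequality in $\ell^{q'}\times\ell^{q}$; a second application of dominated convergence, now in the counting measure over $k$, finishes the argument.

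The main delicate point is the dualization of $\dot{FB}^{s}_{p,q}$ and the exchange of the double limit in $(k,\xi)$; this is exactly what forces $1<p,q<\infty$ and the use of reflexivity. The hypothesis $\alpha>1/2$ is not critically used in the proof itself but is the standing assumption throughout the paper; it is stated here for consistency with the main theorems. All constants depend only on $\nu$ and on the Littlewood--Paley decomposition, and in particular are independent of the Coriolis parameter $\Omega$.
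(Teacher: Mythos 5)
Your treatment of the persistence and $\mathcal{L}^{\infty}$ bounds coincides with the paper's: both rest on the uniform symbol bound $\mid(S^{\alpha}_{\Omega}(t))^{\wedge}(\xi)v\mid\leq Ce^{-\nu t\mid\xi\mid^{2\alpha}}\mid v\mid$, independent of $\Omega$, followed by the weighted $\ell^{q}(L^{p})$ norm. For the weak continuity, however, you take a genuinely different route. The paper tests against a Schwartz function $\psi$, applies a mean-value estimate in $t$ to the symbol to obtain $\mid\langle S^{\alpha}_{\Omega}(t)u_0-S^{\alpha}_{\Omega}(s)u_0,\psi\rangle\mid\leq C_{\Omega,\nu}\mid t-s\mid$, and then devotes most of the argument to verifying that the two integrals $I_1=\int\mid\hat{u}_0\mid\mid\hat{\psi}\mid\,d\xi$ and $I_2=\int\mid\xi\mid^{2\alpha}\mid\hat{u}_0\mid\mid\hat{\psi}\mid\,d\xi$ are finite, via dyadic splittings over $j\leq 0$ and $j>0$ whose convergence is precisely where $\alpha>\frac{1}{2}$ enters; so your remark that this hypothesis is inert is true of your proof but not of the paper's. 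You instead dualize: embed $\dot{FB}^{s}_{p,q}$ isometrically into $\ell^{q}(\mathbb{Z};L^{p})$, reduce by Hahn--Banach to testing against $g\in\ell^{q'}(\mathbb{Z};L^{p'})$, and conclude with two applications of dominated convergence (in $\xi$ for each fixed $k$, then in $k$ against the counting measure, the latter dominated via H\"older in $\ell^{q'}\times\ell^{q}$). This is correct, and it is arguably more faithful to the statement: it yields weak convergence in the Banach space $\dot{FB}^{4-2\alpha-\frac{3}{p}}_{p,q}$ directly, whereas the paper explicitly establishes only convergence in $\mathcal{S}'$ together with a uniform norm bound and leaves the upgrade to weak convergence (boundedness plus reflexivity plus identification of the limit) implicit; it also makes transparent why $1<p,q<\infty$ is needed. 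What the paper's computation buys in exchange is a quantitative Lipschitz rate $O(\mid t-s\mid)$ for the distributional pairing, which your qualitative argument does not provide. Two small points you should make explicit: the Hahn--Banach reduction needs only that the embedding is isometric on a linear subspace, not reflexivity of $\dot{FB}^{s}_{p,q}$ itself (which would additionally require closedness of the image, i.e.\ completeness of the space); and the fixed-$k$ domination uses that $\parallel\varphi_k\hat{u}_0\parallel_{L^{p}}<\infty$ for each $k$, which is immediate from $u_0\in\dot{FB}^{4-2\alpha-\frac{3}{p}}_{p,q}$.
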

\begin{proof}
From def{}inition of the norm in $\dot{FB}^{4-2\alpha-\frac{3}{p}}_{p,q}$, considering the bound of the matrix $R(\xi)$ and since $e^{-\nu t\mid\xi\mid^{2\alpha}}\leq 1$, we have the estimative
\begin{eqnarray*}
\parallel S^{\alpha}_{\Omega}(t)u_0\parallel_{\dot{FB}^{4-2\alpha-\frac{3}{p}}_{p,q}}&=&
\parallel 2^{j(4-2\alpha-\frac{3}{p})}\parallel \varphi_j
\left(S^{\alpha}_{\Omega}(t) \right)^{\wedge}\hat{u}_0\parallel_{L^{p}}
\parallel_{l^{q}}\\
&\leq& C 
\parallel 2^{j(4-2\alpha-\frac{3}{p})}\parallel \varphi_j
e^{-\nu t\mid\xi\mid^{2\alpha}}\hat{u}_0\parallel_{L^{p}}
\parallel_{l^{q}}\\
&\leq &
C\parallel u_0\parallel_{\dot{FB}^{4-2\alpha-\frac{3}{p}}_{p,q}},
\end{eqnarray*} 
where the constant $C>0$ does not depends on the Coriolis parameter $\Omega$. Therefore $S^{\alpha}_{\Omega}(t)u_0\in \dot{FB}^{4-2\alpha-\frac{3}{p}}_{p,q}$, for all $t>0$. We also can get the following estimate in the space ${\cal L}^{\infty}
(I; \dot{FB}^{4-2\alpha-\frac{3}{p}}_{p,q})$ ,
\begin{eqnarray*}
\parallel S^{\alpha}_{\Omega}(\cdot)u_0\parallel_{
{\cal L}^{\infty}
(I; \dot{FB}^{4-2\alpha-\frac{3}{p}}_{p,q})}&=&
\parallel 2^{j(4-2\alpha-\frac{3}{p})}
\parallel \varphi_j\left(S^{\alpha}_{\Omega}(t)\right)^{\wedge}\hat{u}_0
\parallel_{L^{\infty}(I;L^{p})}\parallel_{l^{q}}\\
&\leq &C\parallel u_0\parallel_{\dot{FB}^{4-2\alpha-\frac{3}{p}}_{p,q}},
\end{eqnarray*}	
where $C>0$ is a constant which does not depends on the Coriolis parameter 
$\Omega$.
Now let us show the weak continuity at the time $s=0$. For each 
$\psi\in{\cal S}$, we use the Plancherel formula in order to get 
\begin{eqnarray*}
\mid \langle S^{\alpha}_{\Omega}(t)u_0-u_0,\psi\rangle\mid&=&
\mid \langle \left(S^{\alpha}_{\Omega}(t)\right)^{\wedge}\hat{u}_0-\hat{u}_0,\hat{\psi}\rangle\mid\\
&\leq&
\displaystyle{\int_{\mathbb{R}^{3}}}
	\mid 1-e^{-\nu t\mid\xi\mid^{2\alpha}}\cos\left(\Omega\frac{\xi_3}{\mid\xi\mid}t\right)\mid
	\parallel I\parallel\mid \hat{u}_0\mid\mid\hat{\psi}(\xi)\mid\,d\xi+\\
& & 
\displaystyle{\int_{\mathbb{R}^{3}}}
\mid e^{-\nu t\mid\xi\mid^{2\alpha}}\sin\left(\Omega\frac{\xi_3}{\mid\xi\mid}t\right)\mid \parallel R(\xi)\parallel\mid \hat{u}_0\mid\mid\hat{\psi}(\xi)\mid\,d\xi\\
&\leq & Ct
\left(
\displaystyle{\int_{\mathbb{R}^{3}}}
\displaystyle{\sup_{0\leq\theta\leq t}}
\mid \frac{d}{d\theta} e^{-\nu \theta\mid\xi\mid^{2\alpha}}\cos\left(
\frac{\Omega\xi_3}{\mid\xi\mid}\theta\right)\mid 
	\mid\hat{u}_0(\xi)\mid \mid \hat{\psi}(\xi)\mid\,d\xi+\right.\\	 
& & \left. 
\displaystyle{\int_{\mathbb{R}^{3}}}
\displaystyle{\sup_{0\leq\theta\leq t}}
\mid \frac{d}{d\theta} e^{-\nu \theta\mid\xi\mid^{2\alpha}}\sin\left(
\frac{\Omega\xi_3}{\mid\xi\mid}\theta\right)\mid 
\mid\hat{u}_0(\xi)\mid \mid \hat{\psi}(\xi)\mid\,d\xi
\right)
\end{eqnarray*}
Considering estimates 

\begin{equation*}
\mid \frac{d}{d\theta} e^{-\nu \theta\mid\xi\mid^{2\alpha}}\cos\left(
\frac{\Omega\xi_3}{\mid\xi\mid}\theta\right)\mid \leq 
\mid\Omega\mid e^{-\nu\theta\mid\xi\mid^{2\alpha}}+
\nu\mid\xi\mid^{2\alpha}e^{-\nu\theta\mid\xi\mid^{2\alpha}}
\end{equation*}
and  

\begin{equation*}
\mid \frac{d}{d\theta} e^{-\nu \theta\mid\xi\mid^{2\alpha}}\sin\left(
\frac{\Omega\xi_3}{\mid\xi\mid}\theta\right)\mid \leq 
\mid\Omega\mid e^{-\nu\theta\mid\xi\mid^{2\alpha}}+
\nu\mid\xi\mid^{2\alpha}e^{-\nu\theta\mid\xi\mid^{2\alpha}},
\end{equation*}
we have

\begin{equation}\label{des4.2A}
\mid \langle S^{\alpha}_{\Omega}(t)u_0-u_0,\psi\rangle\mid\leq C_{\Omega,\nu}\, \left(I_1+I_2\right)t, 
\end{equation}
where 

\begin{equation*}
I_1=
\displaystyle{\int_{\mathbb{R}^{3}}}
\mid\hat{u}_0(\xi)\mid \mid\hat{\psi}(\xi)\mid\,d\xi\,\,\mbox{and}\,\,
I_2=
\displaystyle{\int_{\mathbb{R}^{3}}}\mid \xi\mid^{2\alpha}
\mid\hat{u}_0(\xi)\mid \mid\hat{\psi}(\xi)\mid\,d\xi.
\end{equation*}
We claim that $I_1$ and $I_2$ are finite. We first estimate $I_1$:
Since $\displaystyle{\sum_{j\in\mathbb{Z}}}\varphi_j=1$, for each $\xi\in\mathbb{R}^{3}-\{0\}$, we have
\begin{eqnarray*}
I_1&=&
 \displaystyle{\sum_{j\in\mathbb{Z}}}
\displaystyle{\int_{2^{j}\cal{C}}}
\varphi_j(\xi) \mid\hat{u}_0(\xi)\mid \mid\hat{\psi}(\xi)\mid\,d\xi\\
&\leq&
 \displaystyle{\sum_{j\in\mathbb{Z}}}
\parallel \varphi_j\hat{u}_0\parallel_{L^{p}}
\left(
\displaystyle{\int_{2^{j}\cal{C}}}\mid \hat{\psi}(\xi)\mid^{p'}\,d\xi
\right)^{\frac{1}{p'}}\\
&\leq & 
 \displaystyle{\sum_{j\in\mathbb{Z}}}\,
 2^{j(4-2\alpha-\frac{3}{p})} 
 \parallel \varphi_j\hat{u}_0\parallel_{L^{p}}
 \left(
 \displaystyle{\int_{2^{j}\cal{C}}}
 \mid\xi\mid^{(-4+2\alpha+\frac{3}{p})p'}
 \mid \hat{\psi}(\xi)\mid^{p'}\,d\xi
 \right)^{\frac{1}{p'}}\\
 &\leq&C
 \parallel u_0\parallel_{\dot{FB}^{4-2\alpha-\frac{3}{p}}_{p,q}}
 \left( J_1+J_2\right)^{\frac{1}{q'}},
\end{eqnarray*} 
where

\begin{equation*}
J_1=
 \displaystyle{\sum_{j\leq 0}}
\left(
\displaystyle{\int_{2^{j}\cal{C}}}
\mid\xi\mid^{(-4+2\alpha+\frac{3}{p})p'}
\mid \hat{\psi}(\xi)\mid^{p'}\,d\xi
\right)^{\frac{q'}{p'}}
\end{equation*}
and 

\begin{equation*}
J_2=
\displaystyle{\sum_{j> 0}}
\left(
\displaystyle{\int_{2^{j}\cal{C}}}
\mid\xi\mid^{(-4+2\alpha+\frac{3}{p})p'}
\mid \hat{\psi}(\xi)\mid^{p'}\,d\xi
\right)^{\frac{q'}{p'}}.
\end{equation*}
Let us take $a=4-2\alpha<3$ and $A_{1}=\frac{1}{p}(\alpha-\frac{1}{2})$. Thus, we can conclude that
\begin{eqnarray*}
J_1&\leq&
\displaystyle{\sum_{j\leq 0}}2^{jA_1q'}
\left(
\displaystyle{\int_{\mid\xi\mid\leq 2}}
\frac{1}{\mid\xi\mid^{a}}\mid\xi\mid^{A_{1}p'}\mid\hat{\psi}(\xi)\mid^{p'}\,d\xi\right)^{\frac{q'}{p'}}\\
&\leq&
\displaystyle{\sum_{j\leq 0}}2^{jA_1q'}
\left(
\displaystyle{\int_{\mid\xi\mid\leq 2}}
\frac{1}{\mid\xi\mid^{a}}\,d\xi
\right)^{\frac{q'}{p'}}\cdot
\left(
\displaystyle{\sup_{\xi\in\mathbb{R}^{3}}}
\mid \xi\mid^{A_1}\mid\hat{\psi}(\xi)\mid\,d\xi
\right)^{q'}<+\infty.
\end{eqnarray*}
Working similarly, we can take $b=3+p(\alpha-\frac{1}{2})>3$ and $A_2=(p+3)(\alpha-\frac{1}{2})$, in order to get the estimate

\begin{equation*}
J_2\leq
\displaystyle{\sum_{j> 0}}2^{-jA_2q'}
\left(
\displaystyle{\int_{\mid\xi\mid\geq 4}}
\frac{1}{\mid\xi\mid^{b}}\,d\xi
\right)^{\frac{q'}{p'}}\cdot
\left(
\displaystyle{\sup_{\xi\in\mathbb{R}^{3}}}
\mid \xi\mid^{2(p+2)(\alpha-\frac{1}{2})}\mid\hat{\psi}(\xi)\mid\,d\xi
\right)^{q'}<+\infty.
\end{equation*}
These last two estimatives implies that $I_1< +\infty$. 
For estimative of the term $I_2$, working as in estimatives for the term 
$I_1$, we observe that

\begin{equation*}
I_2\leq C
\parallel u_0\parallel_{\dot{FB}^{4-2\alpha-\frac{3}{p}}_{p,q}}
\left(
L_1+L_2
\right)^{\frac{1}{q'}},
\end{equation*}
where 

\begin{equation*}
L_1=
\displaystyle{\sum_{j\leq 0}}
\left(
\displaystyle{\int_{2^{j}\cal{C}}}
\mid\xi\mid^{(-4+4\alpha+\frac{3}{p})p'}
\mid \hat{\psi}(\xi)\mid^{p'}\,d\xi
\right)^{\frac{q'}{p'}}
\end{equation*}
and 

\begin{equation*}
L_2=
\displaystyle{\sum_{j> 0}}
\left(
\displaystyle{\int_{2^{j}\cal{C}}}
\mid\xi\mid^{(-4+4\alpha+\frac{3}{p})p'}
\mid \hat{\psi}(\xi)\mid^{p'}\,d\xi
\right)^{\frac{q'}{p'}}.
\end{equation*}
Taking $\frac{1}{2}<\alpha$, $\tilde{a}=\frac{5}{2}$ and 
$\tilde{A}_{1}=\frac{1}{4p}+2\alpha-\frac{3}{4}>0$, we get the following estimative

\begin{equation*}
L_1\leq
\displaystyle{\sum_{j\leq 0}}2^{j\tilde{A}_{1}q'}
\left(
\displaystyle{\int_{\mid\xi\mid\leq 2}}
\frac{1}{\mid\xi\mid^{\tilde{a}}}\,d\xi
\right)^{\frac{q'}{p'}}\cdot
\left(
\displaystyle{\sup_{\xi\in\mathbb{R}^{3}}}
\mid \xi\mid^{2(\alpha-\frac{1}{2})+\frac{1}{4}(1+\frac{1}{p}) }\mid\hat{\psi}(\xi)\mid\,d\xi
\right)^{q'}<+\infty.
\end{equation*}
Also, if we consider $\frac{1}{2}<\alpha$, $\tilde{b}=\frac{1}{2}(p+6)$ and $\tilde{A}_{2}=\frac{1}{2}(p+1)$, we get

\begin{equation*}
L_2\leq
\displaystyle{\sum_{j\leq 0}}2^{-j\tilde{A}_{2}q'}
\left(
\displaystyle{\int_{\mid\xi\mid\geq 4}}
\frac{1}{\mid\xi\mid^{\tilde{b}}}\,d\xi
\right)^{\frac{q'}{p'}}\cdot
\left(
\displaystyle{\sup_{\xi\in\mathbb{R}^{3}}}
\mid \xi\mid^{4(\alpha-\frac{1}{2}) + (p+1)}\mid\hat{\psi}(\xi)\mid\,d\xi
\right)^{q'}<+\infty.
\end{equation*}
These last two estimatives implies that $I_2< +\infty$. The inequality $(\ref{des4.2A})$  implies that

\begin{equation*}
\mid \langle S^{\alpha}_{\Omega}(t)u_0-u_0,\psi\rangle\mid\leq
C_{\Omega,\nu}t,\,\,\mbox{with}\,\,C_{\Omega,\nu}<+\infty.
\end{equation*}
Therefore, we get the desired convergence

\begin{equation*}
S^{\alpha}_{\Omega}(t)u_0\longrightarrow u_0\,\,\mbox{in}\,\,{\cal S}',\,\,
\,\mbox{as}\,\,t\longrightarrow 0^{+}.
\end{equation*}
Now we shall prove the following convergence

\begin{equation*}
S^{\alpha}_{\Omega}(t)u_0\longrightarrow S^{\alpha}_{\Omega}(s)u_0\,\,\mbox{in}\,\,{\cal S}',\,\,
\,\mbox{as}\,\,t\longrightarrow s\,\,\mbox{for each}\,\, s>0.
\end{equation*}
The Plancherel formula and the semigroup property give us
\begin{eqnarray*}
\mid \langle S^{\alpha}_{\Omega}(t)u_0-S^{\alpha}_{\Omega}(s)u_0,
\psi\rangle
&=&
\mid \langle S^{\alpha}_{\Omega}(s))^{\wedge}
[(S^{\alpha}_{\Omega})(t-s))^{\wedge}\hat{u}_0-\hat{u}_0],\hat{\psi}
\rangle\mid\\
&\leq& C
\left(
\displaystyle{\int_{\mathbb{R}^{3}}}
\mid 1-e^{-\nu (t-s)\mid\xi\mid^{2\alpha}}\cos(
\frac{\Omega\xi_3}{\mid\xi\mid}(t-s))
\mid\mid\hat{u}_0(\xi)\mid
\mid \hat{\psi}(\xi\mid\,d\xi)+\right.\\
& &
\left.
\displaystyle{\int_{\mathbb{R}^{3}}}
\mid e^{-\nu (t-s)\mid\xi\mid^{2\alpha}}\sin(
\frac{\Omega\xi_3}{\mid\xi\mid}(t-s))
\mid\mid\hat{u}_0(\xi)\mid
\mid \hat{\psi}(\xi\mid\,d\xi)\right)\\
&\leq& C_{\Omega,\nu}\mid t-s\mid.
\end{eqnarray*}
This estimative gives the desired weak continuity at $s>0$ and complete the proof.
\end{proof}	
Next lemma describes the gain of regularity for the integral of the semigroup applied to the projection of the external force. In order to get this gain of regularity, we use the integral of the Fourier transform of the semigroup $\left(S^{\alpha}_{
\Omega}(t) \right)^{\wedge}$.

\begin{lema}\label{lemmawt}
For each $F\in {\cal L}^{\infty}(I;\dot{FB}^{4-4\alpha-\frac{3}{p}}_{p,q})$ it is valid that

\begin{equation*}
w(t)=\displaystyle{\int_{0}^{t}}
S^{\alpha}_{\Omega}(t-\tau)\mathbb{P}F(\tau)\,
d\tau\in\dot{FB}^{4-2\alpha-\frac{3}{p}}_{p,q},\,\,\mbox{for each}\,\,t>0, 
\end{equation*}
the boundedness

\begin{equation*}
\parallel w\parallel_{{\cal X}}\leq \frac{C}{\nu}
\parallel F\parallel_{ {\cal L}^{\infty}(I;\dot{FB}^{4-4\alpha-\frac{3}{p}}_{p,q})},
\end{equation*}	
for some constant $C>0$ and the weak convergence $w(t)\rightharpoonup w(s)$ as $t\rightarrow s$, for each $s\geq 0$.
\end{lema}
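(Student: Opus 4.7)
The plan is to reduce everything to the pointwise Fourier-side bound $\bigl|(S^\alpha_\Omega(t))^\wedge(\xi)\bigr| \leq C\,e^{-\nu|\xi|^{2\alpha}t}$, which follows from $|\cos|,|\sin|\leq 1$, the boundedness of the entries $\xi_i/|\xi|$ of $R(\xi)$, and the boundedness of the symbol $\widehat{\mathbb{P}}(\xi)$ of the Leray projector. Substituting into the definition of $w(t)$ and applying Minkowski's integral inequality in $L^p_\xi$ I would obtain
\[ \parallel \varphi_j \widehat{w(t)}\parallel_{L^p} \leq C\int_0^t e^{-\nu|\xi|^{2\alpha}(t-\tau)}\parallel \varphi_j \hat F(\tau)\parallel_{L^p}\,d\tau. \]
On the support of $\varphi_j$ one has $|\xi|\sim 2^j$, so $e^{-\nu|\xi|^{2\alpha}(t-\tau)}\leq e^{-c\nu 2^{2\alpha j}(t-\tau)}$ for some geometric $c>0$. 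Pulling the supremum of $\parallel \varphi_j \hat F(\tau)\parallel_{L^p}$ in $\tau$ out of the integral and using the elementary bound $\int_0^t e^{-c\nu 2^{2\alpha j}(t-\tau)}\,d\tau \leq \tfrac{1}{c\nu}\,2^{-2\alpha j}$, which is uniform in $t$, yields
\[ \parallel \varphi_j \hat w\parallel_{L^\infty(I;L^p)} \leq \frac{C}{\nu}\,2^{-2\alpha j}\parallel \varphi_j \hat F\parallel_{L^\infty(I;L^p)}. \]
Multiplying by $2^{j(4-2\alpha-3/p)}$ converts the factor $2^{-2\alpha j}$ into the correct index shift since $2^{j(4-2\alpha-3/p)}\cdot 2^{-2\alpha j}=2^{j(4-4\alpha-3/p)}$; taking the $\ell^q_j$ norm then delivers the claimed estimate $\parallel w\parallel_{\mathcal X}\leq \tfrac{C}{\nu}\parallel F\parallel_{{\cal L}^\infty(I;\dot{FB}^{4-4\alpha-3/p}_{p,q})}$ and, in particular, $w(t)\in \dot{FB}^{4-2\alpha-3/p}_{p,q}$ for each $t>0$.

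For the weak continuity I would split, for $t>s\geq 0$,
\[ w(t)-w(s) = \int_s^t S^\alpha_\Omega(t-\tau)\mathbb{P}F(\tau)\,d\tau + \int_0^s \bigl[S^\alpha_\Omega(t-\tau)-S^\alpha_\Omega(s-\tau)\bigr]\mathbb{P}F(\tau)\,d\tau. \]
Pairing with $\psi\in{\cal S}$ and using Plancherel, the first piece is bounded by $C\,|t-s|\,\sup_\tau \int_{\mathbb R^3}|\hat F(\tau,\xi)||\hat\psi(\xi)|\,d\xi$; the inner integral is finite by the same low/high-frequency decomposition $J_1,J_2$ carried out in the proof of Lemma \ref{lemmasemFC}, now with $F(\tau)$ in place of $u_0$, and the supremum in $\tau$ is controlled by $\parallel F\parallel_{{\cal L}^\infty(I;\dot{FB}^{4-4\alpha-3/p}_{p,q})}$. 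For the second piece, the mean value theorem together with the derivative bound $|\partial_\theta (S^\alpha_\Omega(\theta))^\wedge(\xi)|\leq C(|\Omega|+\nu|\xi|^{2\alpha})e^{-\nu\theta|\xi|^{2\alpha}}$ established in Lemma \ref{lemmasemFC} reduces matters to $|t-s|$ times integrals of the form $\int|\hat F(\tau,\xi)||\hat\psi(\xi)|\,d\xi$ and $\int|\xi|^{2\alpha}|\hat F(\tau,\xi)||\hat\psi(\xi)|\,d\xi$, which are exactly the quantities $I_1$ and $I_2$ of the previous lemma applied to $F(\tau)$. The decompositions $J_1,J_2,L_1,L_2$ there show these are finite uniformly in $\tau\in (0,s)$, and integrating in $\tau$ produces a bound of order $|t-s|$ which tends to $0$ as $t\to s$.

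The main obstacle is organising the parabolic smoothing so that the $2\alpha$-regularity index lift from $4-4\alpha-3/p$ up to $4-2\alpha-3/p$ comes out with the correct $\tfrac{1}{\nu}$ factor and independently of the Coriolis parameter $\Omega$; once the pointwise Fourier bound and the sharp elementary integral are in place, this step is routine. The weak-continuity part is more delicate because the Coriolis contributions oscillate and the semigroup is not strongly continuous in $\mathcal{X}$, but the proof becomes largely bookkeeping once the bounds on $\widehat{S^\alpha_\Omega}$ and $\partial_t\widehat{S^\alpha_\Omega}$ from Lemma \ref{lemmasemFC} are recycled, together with the restriction $\alpha>\tfrac12$, $1<p<\infty$, $1\leq q\leq\infty$ which guarantees the convergence of the Schwartz-side series $J_1,J_2,L_1,L_2$ needed to bound the time integrals.
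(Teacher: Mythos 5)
Your proposal is correct and follows essentially the same route as the paper: the boundedness comes from the pointwise bound $|(S^{\alpha}_{\Omega})^{\wedge}|\leq Ce^{-\nu|\xi|^{2\alpha}t}$ plus $\int_{0}^{t}e^{-\nu(t-\tau)2^{2\alpha(j-1)}}d\tau\lesssim \nu^{-1}2^{-2\alpha j}$, and the weak continuity from the same splitting of $w(t)-w(s)$ into the short-interval piece and the semigroup-difference piece, each of order $|t-s|$ after recycling the frequency decompositions of Lemma \ref{lemmasemFC}. The only cosmetic difference is that you invoke the mean value theorem on $\theta\mapsto(S^{\alpha}_{\Omega}(\theta))^{\wedge}$ directly where the paper factors out $(I-(S^{\alpha}_{\Omega})^{\wedge}(s-t,\xi))$ via the semigroup property; both yield the same factor $|t-s|(|\Omega|+\nu|\xi|^{2\alpha})$.
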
	
\begin{proof}
By def{}inition of the norm in $\dot{FB}^{4-2\alpha-\frac{3}{p}}_{p,q}$, looking at the expression for the semigroup $\left(S^{\alpha}_{
\Omega}(t) \right)^{\wedge}$ and the bound for the matrix $R(\xi)$ we have
\begin{eqnarray*}
\parallel w(t)\parallel_{\dot{FB}^{4-4\alpha-\frac{3}{p}}_{p,q}}&=&
\parallel 
2^{j(4-2\alpha-\frac{3}{p})}
\parallel\varphi_j\hat{w}(t)\parallel_{L^{p}}
\parallel_{l^{q}}\\
&\leq& C
\parallel
\displaystyle{\int_{0}^{t}}
2^{j(4-2\alpha-\frac{3}{p})}
e^{-\nu(t-\tau)2^{2\alpha(j-1)}}
\parallel \varphi_j\hat{F}(\tau,\cdot)\parallel_{
L^{p}}\,d\tau\parallel_{l^{q}}\\
&\leq&
C
\parallel
\displaystyle{\int_{0}^{t}}
2^{j(4-2\alpha-\frac{3}{p})}
e^{-\nu(t-\tau)2^{2\alpha(j-1)}}
\,d\tau
\parallel \varphi_j\hat{F}\parallel_{L^{\infty}(I;
	L^{p})}\parallel_{l^{q}}\\
&\leq&
\frac{C}{\nu} 
\parallel 2^{j(4-4\alpha-\frac{3}{p})}
\parallel \varphi_j\hat{F}\parallel_{L^{\infty}(I;
	L^{p})}\parallel_{l^{q}}\\
&\leq&
\frac{C}{\nu}
\parallel F\parallel_{{\cal L}^{\infty}(I;\dot{FB}^{4-4\alpha-\frac{3}{p}}_{p,q})}
\end{eqnarray*}	
Thus, $w(t)\in \dot{FB}^{4-2\alpha-\frac{3}{p}}_{p,q}$, for each $t>0$. Also we have the estimate
\begin{eqnarray*}
\parallel w\parallel_{
{\cal L}^{\infty}(I;\dot{FB}^{4-2\alpha-\frac{3}{p}}_{p,q})}&=&
\parallel 
2^{j(4-2\alpha-\frac{3}{p})}
\parallel\varphi_j\hat{w}(t)\parallel_{L^{\infty}(I;
	L^{p})}
\parallel_{l^{q}}\\
&\leq&\frac{C}{\nu}
\parallel F\parallel_{{\cal L}^{\infty}(I;\dot{FB}^{4-4\alpha-\frac{3}{p}}_{p,q})}.
\end{eqnarray*}
Let us show that $w(t)\rightharpoonup w(0)=0$ in ${\cal S}'$ as $t \rightarrow 0^{+}$. For each $\psi\in{\cal S}$, we have
\begin{eqnarray*}
\mid \langle w(t),\psi\rangle\mid
&=&
\mid 
\displaystyle{\int_{\mathbb{R}^{3}}}
\hat{w}(t)\hat{\psi}(\xi)\,d\xi\mid\\
&\leq&
\displaystyle{\int_{0}^{t}}
\displaystyle{\int_{\mathbb{R}^{3}}}
\displaystyle{\sum_{j\in\mathbb{Z}}}\,
e^{-\nu(t-\tau)2^{2\alpha(j-1)} }
\displaystyle{\int_{2^{j}{\cal C}}}
\mid \hat{\psi}(\xi)\mid
\mid\varphi_j(\xi)\mid
\mid \hat{F}(\tau,\xi)\mid\,d\xi\,d\tau\\
&\leq& 
\displaystyle{\sum_{j\in\mathbb{Z}} }\,
\displaystyle{\int_{0}^{t}}
e^{-\nu(t-\tau)2^{2\alpha(j-1)} }\,d\tau
	\left( 
\displaystyle{\int_{2^{j}{\cal C}}}
\mid \hat{\psi}(\xi)\mid^{p'}\,d\xi	
	\right)^{\frac{1}{p'}}
	\parallel\varphi_j\hat{F}\parallel_{L^{\infty}(I;L^{p})} \\
	&\leq&
t\,
\displaystyle{
\sup_{j\in\mathbb{Z}}
\mid\frac{1-e^{-\nu t2^{2(j-1)\alpha}}}{\nu t 2^{2(j-1)\alpha}}\mid
}	
\left[
\displaystyle{\sum_{j\in\mathbb{Z}} }\,
\left(
\displaystyle{\int_{2^{j}{\cal C}}}
\mid\xi\mid^{-p'(4-4\alpha-\frac{3}{p})}
\mid\hat{\psi}(\xi)\mid^{p'}\,d\xi
\right)^{\frac{q'}{p'}}\right]^{\frac{1}{q'}}\\
&\times& 
\parallel F\parallel_{{\cal L}^{\infty}(I;\dot{FB}^{4-4\alpha-\frac{3}{p}}_{p,q})}
\end{eqnarray*}
and thus 

\begin{equation*}
\mid \langle w(t),\psi\rangle\mid \leq C\cdot t 
\parallel F\parallel_{{\cal L}^{\infty}(I;\dot{FB}^{4-4\alpha-\frac{3}{p}}_{p,q})}.
\end{equation*}
This inequality implies the desired convergence.
Now, we shall prove that $w(t)\rightharpoonup w(s)$ in ${\cal S}'$ as 
$t \rightarrow s$. In fact, we can assume that $t<s$. For each $\psi
\in{\cal S}$, we have

\begin{equation*}
\langle w(t)-w(s),\psi\rangle=\langle \hat{w}(t)-\hat{w}(s),\hat{\psi} \rangle=J_1+J_2,
\end{equation*}
where

\begin{equation*}
J_1=
\displaystyle{\int_{\mathbb{R}^{3}}}
\displaystyle{\int_{0}^{t}}
\left[
(S^{\alpha}_{\Omega})^{\wedge}(t-\tau,\xi)-
(S^{\alpha}_{\Omega})^{\wedge}(s-\tau,\xi)
\right]
\hat{\mathbb{P}}(\xi)\hat{F}(\tau,\xi)\hat{\psi}(\xi)\,d\tau\,d\xi
\end{equation*}
and 

\begin{equation*}
J_2=
\displaystyle{\int_{\mathbb{R}^{3}}}
\displaystyle{\int_{s}^{t}}
(S^{\alpha}_{\Omega})^{\wedge}(s-\tau,\xi)
\hat{\mathbb{P}}(\xi)\hat{F}(\tau,\xi)\hat{\psi}(\xi)\,d\tau\,d\xi.
\end{equation*}
By the semigroup property, we have

\begin{equation*}
J_1=
\displaystyle{\int_{\mathbb{R}^{3}}}
\displaystyle{\int_{0}^{t}}
(S^{\alpha}_{\Omega})^{\wedge}(t-\tau,\xi)
(I- (S^{\alpha}_{\Omega})^{\wedge}(s-t,\xi) )
\hat{\mathbb{P}}(\xi)\hat{F}(\tau,\xi)\hat{\psi}(\xi)\,d\tau\,d\xi
\end{equation*}
and thus the estimates
\begin{eqnarray*}
\mid J_1\mid
&\leq&C
\left[
\displaystyle{\int_{\mathbb{R}^{3}}}
\displaystyle{\int_{0}^{t}}
\mid 1-e^{-\nu(s-t)\mid\xi \mid^{2\alpha}}\cos(\frac{\Omega\xi_3}{\mid\xi\mid}(s-t))\mid
\mid \hat{F}(\tau,\xi)\mid \mid\hat{\psi}(\xi)\mid\,d\tau\,d\xi \right.\\
&+&
\left.
\displaystyle{\int_{\mathbb{R}^{3}}}
\displaystyle{\int_{0}^{t}}
\mid e^{-\nu(s-t)\mid\xi \mid^{2\alpha}}\sin(\frac{\Omega\xi_3}{\mid\xi\mid}(s-t))\mid
\mid \hat{F}(\tau,\xi)\mid \mid\hat{\psi}(\xi)\mid\,d\tau\,d\xi
\right]\\
&\leq&
C\cdot\mid t-s\mid\cdot
\displaystyle{\int_{\mathbb{R}^{3}}}
\displaystyle{\int_{0}^{t}}
(
\Omega+\nu\mid\xi\mid^{2\alpha} )
\mid \hat{F}(\tau,\xi)\mid \mid\hat{\psi}(\xi)\mid\,d\tau\,d\xi
\\
&\leq&
C_{\Omega,\nu}\cdot\mid t-s\mid \cdot
\parallel F\parallel_{{\cal L}^{\infty}(I;\dot{FB}^{4-4\alpha-\frac{3}{p}}_{p,q})}.
\end{eqnarray*}
For the term $J_2$, we have 
\begin{eqnarray*}
\mid J_2\mid
&\leq&
\displaystyle{\int_{\mathbb{R}^{3}}}
\displaystyle{\int_{s}^{t}}
\mid \hat{F}(\tau,\xi)\mid \mid\hat{\psi}(\xi)\mid\,d\tau\,d\xi\\
&\leq&
\displaystyle{\int_{s}^{t}}
\displaystyle{\sum_{j\in\mathbb{Z}}}\,
\displaystyle{\int_{2^{j}\cal C}}
\mid \hat{\psi}(\xi)\mid \mid\varphi_j\mid \mid\hat{F}(\tau,\xi)\mid\,d\tau\\
&\leq&C\cdot
\mid t-s\mid \cdot
\parallel F\parallel_{{\cal L}^{\infty}(I;\dot{FB}^{4-4\alpha-\frac{3}{p}}_{p,q})}.
\end{eqnarray*}
Therefore 
\begin{eqnarray*}
\mid \langle w(t)-w(s),\psi\rangle\mid
 &\leq&
 \mid J_1\mid+\mid J_2\mid\\
 &\leq &
 C_{\Omega,\nu}\cdot\mid t-s\mid \cdot
 \parallel F\parallel_{{\cal L}^{\infty}(I;\dot{FB}^{4-4\alpha-\frac{3}{p}}_{p,q})}
\end{eqnarray*}
and thus we get the desired convergence.
\end{proof}	
Next lemma establish a estimative for the paraproduct of tempered distributions on the space ${\cal X}$. This is one of the fundamental tools in order to control the non-linear term for the system (FNSC).

\begin{lema}\label{productX}
For $\frac{1}{2}<\alpha<\frac{1}{4}\left( 5-\frac{3}{p}\right)$, we have the following estimative

\begin{equation*}
\parallel u\otimes v\parallel_{
	{\cal L}^{\infty}(I;\dot{FB}^{5-4\alpha-\frac{3}{p}}_{p,q})}
\leq \parallel u\parallel_{{\cal X}}
\parallel v\parallel_{{\cal X}}.	
\end{equation*}	
\end{lema}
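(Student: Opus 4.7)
The plan is to apply Bony's paraproduct formula (\ref{paraproduct}) componentwise to each scalar product $u_iv_j$ making up $u\otimes v$, decomposing
$$u_iv_j=T_{u_i}(v_j)+T_{v_j}(u_i)+R(u_i,v_j),$$
and to bound the three pieces separately in the target Fourier-Besov norm. Since all the bounds I will use are pointwise in the time variable $t\in I$, I would take the $L^\infty_t$ norm inside the $L^p_\xi$ norm and then the $l^q_k$ sum, transferring every estimate to the required $\mathcal{L}^\infty(I;\dot{FB}^{5-4\alpha-\frac{3}{p}}_{p,q})$ level with no extra work. To streamline the exponents I set $s_1=4-2\alpha-\tfrac{3}{p}$ and $s_2=5-4\alpha-\tfrac{3}{p}$, and write $\|\varphi_l\hat u\|_p=2^{-ls_1}a_l$, $\|\varphi_l\hat v\|_p=2^{-ls_1}b_l$ with $(a_l),(b_l)\in l^q$.

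For the paraproduct $T_u(v)$, the spectrum of $S_{j-1}u\,\Delta_jv$ is contained in an annulus of scale $2^j$, so only the blocks $\Delta_k$ with $|k-j|\leq 5$ contribute. Young's convolution inequality will give
$$\|\varphi_k\widehat{T_u(v)}\|_p\lesssim\sum_{|j-k|\leq 5}\|\psi_{j-1}\hat u\|_1\,\|\varphi_j\hat v\|_p,$$
and Hölder on each dyadic annulus (a Bernstein-type bound) produces $\|\varphi_l\hat u\|_1\lesssim 2^{3l/p'}\|\varphi_l\hat u\|_p$. Since $3/p'-s_1=2\alpha-1$, the low-frequency sum $\sum_{l\leq j-2}2^{l(2\alpha-1)}a_l$ converges precisely because $\alpha>\tfrac{1}{2}$; a Hölder inequality in $l^q$ with conjugate exponent $q'$ on the geometric factor then yields $\|\psi_{j-1}\hat u\|_1\lesssim 2^{j(2\alpha-1)}\|u\|_{\dot{FB}^{s_1}_{p,q}}$. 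Multiplying by $2^{ks_2}$, the identity $s_2-s_1=1-2\alpha$ cancels the growth factor exactly, and a final $l^q$-summation in $k$ bounds $\|T_u(v)\|_{\dot{FB}^{s_2}_{p,q}}$ by $\|u\|_{\dot{FB}^{s_1}_{p,q}}\|v\|_{\dot{FB}^{s_1}_{p,q}}$. The symmetric piece $T_v(u)$ is handled identically.

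The remainder $R(u,v)=\sum_j\Delta_ju\,\tilde\Delta_jv$ is more delicate because the spectrum of each diagonal term lies in a ball (not an annulus) of scale $2^j$, so $\Delta_kR(u,v)$ receives contributions from all $j\geq k-5$. Young's inequality combined with Bernstein will give
$$\|\varphi_k\widehat{R(u,v)}\|_p\lesssim\sum_{j\geq k-5}\|\varphi_j\hat u\|_1\|\tilde\varphi_j\hat v\|_p\lesssim\sum_{j\geq k-5}2^{3j/p'-2js_1}a_j\tilde b_j=\sum_{j\geq k-5}2^{-js_2}a_j\tilde b_j,$$
where I use the identity $3/p'-2s_1=-s_2$. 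Multiplying by $2^{ks_2}$ turns this into a discrete convolution with the one-sided geometric kernel $m\mapsto 2^{ms_2}\mathbf{1}_{m\leq 5}$, which lies in $l^1$ if and only if $s_2>0$, that is $\alpha<\tfrac{1}{4}(5-\tfrac{3}{p})$. Young for discrete convolutions together with $\|a_j\tilde b_j\|_{l^q}\leq\|a\|_{l^q}\|\tilde b\|_{l^\infty}\leq\|u\|_{\dot{FB}^{s_1}_{p,q}}\|v\|_{\dot{FB}^{s_1}_{p,q}}$ will then close the estimate.

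The main obstacle I expect is the remainder $R(u,v)$: unlike the paraproducts, its sum over frequencies extends to infinity, and it is precisely here that the upper restriction $\alpha<\tfrac{1}{4}(5-\tfrac{3}{p})$ is forced through the positivity of the target regularity index $s_2$. The lower bound $\alpha>\tfrac{1}{2}$, by contrast, emerges from the low-frequency tail of $\|\psi_{j-1}\hat u\|_1$ inside the paraproducts, so both threshold conditions on $\alpha$ in the hypothesis appear naturally, one from each side of the Bony decomposition. The limiting case $q=\infty$ is recovered by replacing every $l^q$-sum by an $l^\infty$-supremum, which does not affect any of the convergence arguments above.
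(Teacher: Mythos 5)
Your proposal is correct and follows essentially the same route as the paper: Bony decomposition of $u\otimes v$, Young's convolution inequality together with the Bernstein-type bound $\parallel\varphi_l\hat u\parallel_{L^1}\lesssim 2^{l(3-\frac{3}{p})}\parallel\varphi_l\hat u\parallel_{L^p}$, the geometric low-frequency sum forcing $\alpha>\frac{1}{2}$ in the paraproduct terms, and the one-sided $l^1$ kernel in the remainder forcing $\alpha<\frac{1}{4}(5-\frac{3}{p})$, closed by Young's inequality for discrete convolutions. Your exponent identities ($3/p'-s_1=2\alpha-1$, $s_2-s_1=1-2\alpha$, $3/p'-2s_1=-s_2$) reproduce exactly the cancellations used in the paper's proof.
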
	
\begin{proof}
Using the paraproduct formula, we can consider the expression

\begin{equation*}
\Delta_j(u v)=
\displaystyle{\sum_{\mid k-j\mid\leq 4}}
\Delta_j(S_{k-1}u\Delta_k v)
+
\displaystyle{\sum_{\mid k-j\mid\leq 4}}
\Delta_j(S_{k-1}v\Delta_k u)
+\displaystyle{\sum_{k\geq j-2}}
\Delta_j
(\Delta_k u\tilde{\Delta}_k v)
\end{equation*}
which, applying the Fourier transform, implies
\begin{eqnarray*}
\varphi_j (uv)^{\wedge}
&=&
\displaystyle{\sum_{\mid k-j\mid\leq 4}}
\varphi_{j}((S_{k-1}u)^{\wedge}\ast \varphi_k \hat{v})
+
\displaystyle{\sum_{\mid k-j\mid\leq 4}}
\varphi_j((S_{k-1}v)^{\wedge}\ast\varphi_k \hat{u})
+\displaystyle{\sum_{k\geq j-2}}
\varphi_j
(\varphi_k u\ast \tilde{\varphi}_k v)\\
&=& I_j+II_j+III_j.
\end{eqnarray*} 
\newline
In order to get estimatives for $I_j$, $II_j$ and $III_j$, we use H\"older inequality for series and the bound inequality $\parallel\varphi_{k'}\hat{u}\parallel_{L^{1}}\leq 2^{k'(3-\frac{3}{p})}\parallel \varphi_{k'}\hat{u}\parallel_{L^{p}}$. Let us start with the estimate for the term $I_j$ as follows,
\begin{eqnarray*}
\parallel I_j \parallel_{L^{\infty}(I;L^{p})} 
&\leq&
\displaystyle{\sum_{\mid k-j\mid\leq 4}}
\parallel (S_{k-1}u)^{\wedge}\ast \varphi_k\hat{v}\parallel_{L^{\infty}(I;L^{p})}\\
&\leq&
\displaystyle{\sum_{\mid k-j\mid\leq 4}}\,
\displaystyle{\sum_{k'<k-1}}
\parallel \varphi_{k'}\hat{u}\parallel_{L^{\infty}(I;L^{1})}
\parallel \varphi_{k}\hat{v}\parallel_{L^{\infty}(I;L^{p})}\\
&\leq &
\displaystyle{\sum_{\mid k-j\mid\leq 4}}\,
\displaystyle{\sum_{k'<k-1}}
2^{k'(3-\frac{3}{p})}
\parallel 
\varphi_{k'}\hat{u}\parallel_{L^{\infty}(I;L^{p})}
\parallel \varphi_{k}\hat{v}\parallel_{L^{\infty}(I;L^{p})}\\
&\leq&
\displaystyle{\sum_{\mid k-j\mid\leq 4}}\,
\displaystyle{\sum_{k'<k-1}}
2^{k'(-1+2\alpha)} 2^{k'(4-2\alpha-\frac{3}{p})}
\parallel 
\varphi_{k'}\hat{u}\parallel_{L^{\infty}(I;L^{p})}
\parallel \varphi_{k}\hat{v}\parallel_{L^{\infty}(I;L^{p})}\\
&\leq&
\parallel u\parallel_{{\cal L}^{\infty}(I;\dot{FB}^{4-2\alpha-\frac{3}{p}}_{p,q})}
\displaystyle{\sum_{\mid k-j\mid\leq 4}}
2^{k(-1+2\alpha)}
\parallel \varphi_k\hat{v}\parallel_{L^{\infty}(I;L^{p})}
\\
&\leq&
\parallel u\parallel_{{\cal X}}
\displaystyle{\sum_{\mid k-j\mid\leq 4}}
2^{k(-5+4\alpha+\frac{3}{p})}
2^{k(4-2\alpha-\frac{3}{p})}
\parallel \varphi_k\hat{v}\parallel_{L^{\infty}(I;L^{p})} \\
&\leq&
\parallel u\parallel_{{\cal X}}
2^{j(-5+4\alpha+\frac{3}{p})}
\displaystyle{\sum_{k}}
2^{-(j-k)(-5+4\alpha+\frac{3}{p})}\chi_{\{l;\mid l\mid\leq 4 \}}(j-k)
2^{k(4-2\alpha-\frac{3}{p})}
\parallel \varphi_k\hat{v}\parallel_{L^{\infty}(I;L^{p})}
\\
&\leq&
\parallel u\parallel_{{\cal X}}
2^{j(-5+4\alpha+\frac{3}{p})}
\displaystyle{\sum_{k}}a_{j-k}
b_k\\
&=&\parallel u\parallel_{{\cal X}}
2^{j(-5+4\alpha+\frac{3}{p})}
(a_l\ast b_k)_{j},
\end{eqnarray*}
where $a_l=2^{-l(-5+4\alpha+\frac{3}{p})}\chi_{\{l;\mid l\mid\leq 4\}}(l)$, $b_{k}=2^{k(4-2\alpha-\frac{3}{p})}\parallel\varphi_k\hat{v}\parallel_{L^{\infty}(I;L^{p})}$ and we use the assumption $\alpha>\frac{1}{2}$ and thus, multiplying by $2^{j(5-4\alpha-\frac{3}{p})}$ on both the sides of the above estimate and then applying Young's inequality for series, we get the following estimate,

\begin{equation}\label{Ij}
\parallel 2^{j(5-4\alpha-\frac{3}{p})}
\parallel I_j\parallel_{L^{\infty}(I;L^{p})}
\parallel_{l^{q}}\leq
C 
\parallel u\parallel_{{\cal X}}
\parallel v\parallel_{{\cal X}}.
\end{equation}
The term $J_2$ we estimate in a similar way as the term $J_1$ and we get

\begin{equation}\label{IIj}
\parallel 2^{j(5-4\alpha-\frac{3}{p})}
\parallel II_j\parallel_{L^{\infty}(I;L^{p})}
\parallel_{l^{q}}\leq
C 
\parallel u\parallel_{{\cal X}}
\parallel v\parallel_{{\cal X}}.
\end{equation}
Let us estimate the last term $I_3$
\begin{eqnarray*}
\parallel
III_j\parallel_{L^{\infty}(I;L^{p})}
&\leq&
\displaystyle{\sum_{k\geq j-2}}
\parallel
\varphi_j(
\varphi_k \hat{u}\ast 
\tilde{\varphi}_k\hat{v}
)
\parallel_{L^{\infty}(I;L^{p})}\\
&\leq &
\displaystyle{\sum_{k\geq j-2}}
\parallel \varphi_k\hat{u}\parallel_{L^{\infty}(I;L^{1})}
\parallel \tilde{\varphi}_k\hat{v}\parallel_{L^{\infty}(I;L^{p})}\\
&\leq&
\displaystyle{\sum_{k\geq j-2}}
2^{k(3-\frac{3}{p})}
\parallel \varphi_k\hat{u}\parallel_{L^{\infty}(I;L^{p})}
\displaystyle{\sum_{\mid k'-k\mid\leq 1}}\,
2^{-k'(4-2\alpha-\frac{3}{p})}
2^{k'(4-2\alpha-\frac{3}{p})} 
\parallel \varphi_{k'}\hat{v}\parallel_{L^{\infty}(I;L^{p})}\\
&\leq &
\parallel v\parallel_{{\cal X}}
\displaystyle{\sum_{k\geq j-2}}
2^{k(3-\frac{3}{p})}
\parallel \varphi_k\hat{u}\parallel_{L^{\infty}(I;L^{p})} 
2^{-k(4-2\alpha-\frac{3}{p})}\\
&\leq&
\parallel v\parallel_{{\cal X}}
\displaystyle{\sum_{k\geq j-2}}
2^{k(-1+2\alpha)} 
\parallel \varphi_k\hat{u}
\parallel_{L^{\infty}(I;L^{p})}\\
&=&
\parallel v\parallel_{{\cal X}}
2^{j(-5+4\alpha+\frac{3}{p})}
\displaystyle{\sum_{k\geq j-2}}
2^{-(j-k)(-5+4\alpha+\frac{3}{p})}
2^{k(4-2\alpha-\frac{3}{p})}
\parallel \varphi_k\hat{u}
\parallel_{L^{\infty}(I;L^{p})}\\
&=&
\parallel v\parallel_{{\cal X}}
2^{j(-5+4\alpha+\frac{3}{p})}
\displaystyle{\sum_{k}}
2^{-(j-k)(-5+4\alpha+\frac{3}{p})}
\chi_{\{l;l \leq 2\}}(j-k) b_k\\
&=& 
\parallel v\parallel_{{\cal X}}
(a_l\ast b_k)_j,
\end{eqnarray*} 
where $a_{l}=2^{-l(-5+4\alpha+\frac{3}{p})}\chi_{\{l;l\leq 2\}}(l)$ and $b_{k}=2^{k(4-2\alpha-\frac{3}{p})}\parallel\varphi_{k}\hat{u}\parallel_{L^{\infty}(I;L^{p})}$. Thus, multiplying by $2^{j(5-4\alpha-\frac{3}{p})}$ on both the sides of the above estimate and then applying Young's inequality for series, we get the following estimate

\begin{equation}\label{IIIj}
\parallel 2^{j(5-4\alpha-\frac{3}{p})}
\parallel I_j\parallel_{L^{\infty}(I;L^{p})}
\parallel_{l^{q}}\leq
C 
\parallel u\parallel_{{\cal X}}
\parallel v\parallel_{{\cal X}},
\end{equation}
where we use the assumption $\alpha<\frac{1}{4}(5-\frac{3}{p})$.
Finally, estimates $(\ref{Ij})$, $(\ref{IIj})$ and $(\ref{IIIj})$ implies the desired estimate for the paraproduct.
\end{proof}	 
The following proposition give us the continuity of the bilinear 
operator $B$ in the space ${\cal X}={\cal L}^{\infty}(I; \dot{FB}^{4-2\alpha-\frac{3}{p}}_{p,q})$.

\begin{pp}\label{constantK}
Let be $\frac{1}{2}<\alpha<\frac{1}{4}\left(5-\frac{3}{p}\right)$. The bilinear operator $B$ defined in (\ref{bilinearterm}) is continuous in ${\cal X}$. Moreover, there is a constant $K>0$ independent of the Coriolis parameter $\Omega$ such that

\begin{equation*}
\parallel B(u,v)\parallel_{{\cal X}}\leq 
K\parallel u\parallel_{{\cal X}}
\parallel v\parallel_{{\cal X}}.
\end{equation*}
\end{pp}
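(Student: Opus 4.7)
The plan is to estimate the $\dot{FB}^{4-2\alpha-\frac{3}{p}}_{p,q}$-norm (over $t$ taken in $L^\infty(I)$) of $B(v,w)$ by a direct frequency-localization argument, and then to reduce the resulting expression to the product estimate in Lemma \ref{productX}. The reason the constant can be chosen independently of $\Omega$ is that the Coriolis parameter appears in the Fourier symbol of $S^\alpha_\Omega(t)$ only through the bounded oscillatory factors $\cos(\Omega\xi_3 t/|\xi|)$ and $\sin(\Omega\xi_3 t/|\xi|)$, and through $R(\xi)$ whose entries $\xi_i/|\xi|$ are pointwise bounded by $1$; hence one has the crude pointwise bound
\begin{equation*}
\bigl| [S^\alpha_\Omega(t-\tau)]^{\wedge}(\xi)\, h \bigr| \leq C\, e^{-\nu(t-\tau)|\xi|^{2\alpha}}\,|h|,
\end{equation*}
with $C>0$ universal. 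Combined with $|\widehat{\mathbb{P}}(\xi)| \leq C$ and the fact that $|\xi| \sim 2^j$ on the support of $\varphi_j$, this gives the kernel control that will be the engine of the argument.

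Next, applying $\varphi_j$ to the Fourier transform of $B(v,w)$, using the above pointwise bound, the factor $|i\xi| \lesssim 2^j$, and integrating in $\tau$, I would obtain
\begin{equation*}
\|\varphi_j\, (B(v,w))^{\wedge}(\cdot,t)\|_{L^p} \leq C\, 2^j \int_0^t e^{-\nu(t-\tau)\, 2^{2\alpha(j-1)}} \|\varphi_j\, (v\otimes w)^{\wedge}(\cdot,\tau)\|_{L^p}\, d\tau.
\end{equation*}
Taking $L^\infty_t(I)$ and pulling the supremum outside the time integral, the $\tau$-integral yields a factor bounded by $\frac{C}{\nu}\, 2^{-2\alpha j}$, so that
\begin{equation*}
\|\varphi_j\, (B(v,w))^{\wedge}\|_{L^\infty(I;L^p)} \leq \frac{C}{\nu}\, 2^{j(1-2\alpha)}\, \|\varphi_j\, (v\otimes w)^{\wedge}\|_{L^\infty(I;L^p)}.
\end{equation*}

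Multiplying by $2^{j(4-2\alpha-\frac{3}{p})}$ converts the right-hand weight into $2^{j(5-4\alpha-\frac{3}{p})}$, i.e. exactly the Fourier-Besov index for the product space in Lemma \ref{productX}. Taking the $\ell^q$-norm in $j$ then gives
\begin{equation*}
\|B(v,w)\|_{\mathcal{X}} \leq \frac{C}{\nu}\, \|v \otimes w\|_{\mathcal{L}^\infty(I;\dot{FB}^{5-4\alpha-\frac{3}{p}}_{p,q})} \leq \frac{C}{\nu}\, \|v\|_{\mathcal{X}}\, \|w\|_{\mathcal{X}},
\end{equation*}
which is the claim with $K=C/\nu$ independent of $\Omega$.

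The nontrivial step is the product estimate encoded in Lemma \ref{productX}, which is already proved in the excerpt and uses the paraproduct decomposition together with the hypothesis $\frac{1}{2} < \alpha < \frac{1}{4}(5-\frac{3}{p})$; the two strict inequalities appear precisely as the convergence conditions for the low- and high-frequency sums in the $I_j$, $II_j$ and $III_j$ pieces. The only other subtle point is to keep the constant uniform in $\Omega$: this must be justified by appealing to the trivial bounds on $\cos$, $\sin$ and $R(\xi)$ rather than to any cancellation in the oscillatory symbol, so no integration by parts or stationary-phase argument in $\Omega$ is required at this stage.
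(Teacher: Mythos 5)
Your proposal is correct and follows essentially the same route as the paper's proof: bound the semigroup symbol crudely by $Ce^{-\nu(t-\tau)|\xi|^{2\alpha}}$ using the boundedness of $\cos$, $\sin$ and $R(\xi)$ (which is what makes $K$ independent of $\Omega$), pick up the factor $2^{j}$ from the derivative, integrate in $\tau$ to gain $2^{-2\alpha j}/\nu$, and reduce to the paraproduct estimate of Lemma \ref{productX} in $\dot{FB}^{5-4\alpha-\frac{3}{p}}_{p,q}$. No meaningful differences to report.
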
	
\begin{proof}
In fact, recalling the definition of the bilinear operator $B$ and the norm of the space ${\cal X}$, we have
\begin{eqnarray*}
\parallel B(u,v)\parallel_{{\cal X}}
&=&
\parallel 2^{j(4-2\alpha-\frac{3}{p})}
\parallel\varphi_j(B(u,v))^{\wedge}\parallel_{L^{\infty}(I;L^{p})}
\parallel_{l^{q}}\\
&=&
\parallel 2^{j(4-2\alpha-\frac{3}{p})}
\parallel \varphi_j 
\displaystyle{\int_{0}^{t}}
\left(
e^{-\nu(t-\tau)\mid\xi\mid^{2\alpha}}\cos(\frac{\Omega\xi_3}{\mid\xi\mid}(t-\tau))I+\right.\\
& &\left.
e^{-\nu(t-\tau)\mid\xi\mid^{2\alpha}}\sin(\frac{\Omega\xi_3}{\mid\xi\mid}(t-\tau))R(\xi)
\right)
 \xi\cdot
(u\otimes v)^{\wedge}(\tau,\xi)
\parallel_{L^{\infty}(I;L^{p})}
\parallel_{l^{q}}\\
&\leq&
C\parallel 2^{j(5-2\alpha-\frac{3}{p})}
\parallel
\displaystyle{\int_{0}^{t}}
e^{-\nu(t-\tau)2^{2\alpha(j-1)}}
\parallel \varphi_j
(u\otimes v)^{\wedge}(\tau,\xi)
\parallel_{L^{p}}\,d\tau
\parallel_{L^{\infty}(I)}
\parallel_{l^{q}}\\
&\leq&
C
\parallel 2^{j(5-4\alpha-\frac{3}{p})}
\parallel \varphi_j
(u\otimes v)^{\wedge}
\parallel_{L^{\infty}(I;L^{p})}
\parallel_{l^{q}}\\
&=& C
\parallel u\otimes v\parallel_{
{\cal L}^{\infty}(I;\dot{FB}^{5-4\alpha-\frac{3}{p}}_{p,q})
}\\
&\leq &
K
\parallel u\parallel_{{\cal X}}
\parallel v\parallel_{{\cal X}},
\end{eqnarray*}	
where we are using the assumption $\frac{1}{2}<\alpha<\frac{1}{4}\left(5-\frac{3}{p}\right)$ and the 
estimative \ref{productX} of the paraproduct in ${\cal X}$.
\end{proof}	
\begin{obs}
Lemma $\ref{lemmasemFC}$, Lemma $\ref{lemmawt}$, Proposition $\ref{constantK}$ and the fixed point lemma (Lemma \ref{fixedpoint}) permit us to prove Theorem \ref{teorema1}.
\end{obs}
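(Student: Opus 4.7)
The plan is to recast the (FNSC)-system via its mild formulation (\ref{mildform}) as an abstract fixed-point equation $u=y+B(u,u)$ in the Banach space $\mathcal{X}=\mathcal{L}^{\infty}(I;\dot{FB}^{4-2\alpha-\frac{3}{p}}_{p,q})$, with $y(t)=S^{\alpha}_{\Omega}(t)u_{0}+\tilde{F}(t)$ and $B(\cdot,\cdot)$ the bilinear operator defined in (\ref{bilinearterm}), and then simply check that the hypotheses of Lemma \ref{fixedpoint} are met.

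First I would control the ``data'' term $y$ in $\mathcal{X}$. Lemma \ref{lemmasemFC} supplies an inequality of the form $\|S^{\alpha}_{\Omega}(\cdot)u_{0}\|_{\mathcal{X}}\leq C_{1}\|u_{0}\|_{\dot{FB}^{4-2\alpha-\frac{3}{p}}_{p,q}}$ with $C_{1}$ independent of $\Omega$, while Lemma \ref{lemmawt} yields $\|\tilde{F}\|_{\mathcal{X}}\leq \frac{C_{2}}{\nu}\|F\|_{\mathcal{L}^{\infty}(I;\dot{FB}^{4-4\alpha-\frac{3}{p}}_{p,q})}$, again with $C_{2}$ independent of $\Omega$. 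Adding these two bounds,
\begin{equation*}
\|y\|_{\mathcal{X}}\leq C_{0}\left(\|u_{0}\|_{\dot{FB}^{4-2\alpha-\frac{3}{p}}_{p,q}}+\|F\|_{\mathcal{L}^{\infty}(I;\dot{FB}^{4-4\alpha-\frac{3}{p}}_{p,q})}\right),
\end{equation*}
where $C_{0}:=\max\{C_{1},C_{2}/\nu\}$ does not depend on the Coriolis parameter $\Omega$.

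Next, Proposition \ref{constantK}, valid precisely on the range $\frac{1}{2}<\alpha<\frac{1}{4}(5-\frac{3}{p})$, furnishes the bilinear bound $\|B(u,v)\|_{\mathcal{X}}\leq K\|u\|_{\mathcal{X}}\|v\|_{\mathcal{X}}$ with $K>0$ independent of $\Omega$. Setting
\begin{equation*}
\varepsilon := \min\!\left\{\frac{1}{8KC_{0}},\,\frac{1}{8K}\right\},
\end{equation*}
any pair $(u_{0},F)$ obeying the smallness assumption of Theorem \ref{teorema1} satisfies $\|y\|_{\mathcal{X}}<C_{0}\varepsilon\leq\frac{1}{8K}<\frac{1}{4K}$, while also $\varepsilon<\frac{1}{4K}$ as required by the statement. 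Lemma \ref{fixedpoint} then applies to the equation $u=y+B(u,u)$ in $\mathcal{X}$ and delivers at once a global mild solution $u\in\mathcal{X}$, unique in the ball $\{u\in\mathcal{X}\colon \|u\|_{\mathcal{X}}\leq 2\varepsilon\}$, with $\|u\|_{\mathcal{X}}\leq 2\|y\|_{\mathcal{X}}$, and depending continuously on $y$, hence on $(u_{0},F)$, through the Lipschitz bound stated at the end of Lemma \ref{fixedpoint}.

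Since the quantitative inputs are already in place, no step poses a serious obstacle; the only care needed is the bookkeeping of constants, so that the single scalar $\varepsilon$ in the statement of Theorem \ref{teorema1} simultaneously (i) absorbs the constant $C_{0}$ coming from Lemmas \ref{lemmasemFC} and \ref{lemmawt}, and (ii) lies strictly below $1/(4K)$, so that Lemma \ref{fixedpoint} can be invoked. The $\Omega$-independence of $\varepsilon$ is automatic because each of the estimates entering the computation is uniform in the Coriolis parameter.
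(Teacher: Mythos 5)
Your proposal is correct and follows essentially the same route as the paper, which offers no further proof beyond this remark: write the mild formulation as $u=y+B(u,u)$ in $\mathcal{X}$, bound $y$ via Lemma \ref{lemmasemFC} and Lemma \ref{lemmawt}, bound $B$ via Proposition \ref{constantK}, and invoke Lemma \ref{fixedpoint}, all with $\Omega$-independent constants. Your explicit bookkeeping of $\varepsilon$ against $C_{0}$ and $K$ is a faithful (indeed slightly more careful) fleshing-out of exactly the argument the remark intends.
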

In the next lemma we describe the asymptotic behaviour of the integral of the semigroup, for any external force which is small enough for large time.
\begin{lema}\label{lemma5.1}
Let $F$ be an external force such that $F\in{\cal L}^{\infty}(I;\dot{FB}^{4-4\alpha-\frac{3}{p}}_{p,q})$.

\begin{equation*}
\mbox{If}\,\,
\displaystyle{\lim_{t\rightarrow \infty}}
\parallel F(t)\parallel_{\dot{FB}^{4-4\alpha-\frac{3}{p}}_{p,q}}=0\,\,
\mbox{then}\,\,
\displaystyle{\lim_{t\rightarrow \infty}}
\parallel\displaystyle{\int_{0}^{t}}
S^{\alpha}_{\Omega}(t-\tau)\mathbb{P}F(\tau)\,d\tau
\parallel_{\dot{FB}^{4-2\alpha-\frac{3}{p}}_{p,q}}=0.
\end{equation*}	
\end{lema}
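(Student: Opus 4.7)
The plan is to split the time integral at a parameter $T$ which we will send to infinity, and to use two different mechanisms on the two pieces: decay of the forcing on $[T,\infty)$ for the tail piece, and decay of the semigroup in the frequency-supremum sense for the bounded piece. More precisely, write
$$w(t)=w_1(t)+w_2(t),\quad w_1(t)=\int_0^T S^\alpha_\Omega(t-\tau)\mathbb{P}F(\tau)\,d\tau,\quad w_2(t)=\int_T^t S^\alpha_\Omega(t-\tau)\mathbb{P}F(\tau)\,d\tau.$$
Given $\varepsilon>0$, I first take $T$ large so that $\|w_2(t)\|_{\dot{FB}^{4-2\alpha-\frac{3}{p}}_{p,q}}<\varepsilon/2$ uniformly in $t\geq T$, and then, with $T$ fixed, take $t$ large so that $\|w_1(t)\|_{\dot{FB}^{4-2\alpha-\frac{3}{p}}_{p,q}}<\varepsilon/2$.

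For the tail piece $w_2$, the change of variable $\sigma=\tau-T$ reduces it to an object of the form treated in Lemma \ref{lemmawt} with external force $G(\sigma)=F(T+\sigma)$. This gives
$$\|w_2(t)\|_{\dot{FB}^{4-2\alpha-\frac{3}{p}}_{p,q}}\leq \frac{C}{\nu}\,\Bigl\|\bigl(2^{j(4-4\alpha-\frac{3}{p})}\sup_{\tau\geq T}\|\varphi_j\hat{F}(\tau,\cdot)\|_{L^p}\bigr)_j\Bigr\|_{l^q}.$$
The hypothesis $\|F(\tau)\|_{\dot{FB}^{4-4\alpha-\frac{3}{p}}_{p,q}}\to 0$ forces each coordinate $2^{j(4-4\alpha-\frac{3}{p})}\|\varphi_j\hat{F}(\tau,\cdot)\|_{L^p}\to 0$ as $\tau\to\infty$, so the same holds for its $\sup_{\tau\geq T}$ as $T\to\infty$. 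For $q<\infty$, I invoke dominated convergence in $l^q$ with dominating sequence $2^{j(4-4\alpha-\frac{3}{p})}\|\varphi_j\hat{F}\|_{L^\infty(I;L^p)}$, which lies in $l^q$ by hypothesis $F\in\mathcal{L}^\infty(I;\dot{FB}^{4-4\alpha-\frac{3}{p}}_{p,q})$. For $q=\infty$ the $l^\infty$-norm in $j$ commutes with the supremum in $\tau$, so the bound equals $\sup_{\tau\geq T}\|F(\tau)\|_{\dot{FB}^{4-4\alpha-\frac{3}{p}}_{p,\infty}}$, which tends to zero by the hypothesis.

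For the bounded piece $w_1$, since $\tau\in[0,T]$ gives $t-\tau\geq t-T$, I pull the exponential factor out:
$$2^{j(4-2\alpha-\frac{3}{p})}\|\varphi_j\hat{w}_1(t)\|_{L^p}\leq CT\cdot 2^{2\alpha j}e^{-\nu(t-T)2^{2\alpha(j-1)}}\cdot 2^{j(4-4\alpha-\frac{3}{p})}\|\varphi_j\hat{F}\|_{L^\infty(I;L^p)}.$$
The decisive elementary bound is $\sup_{x>0}xe^{-cx}=(ec)^{-1}$, which yields $\sup_{j\in\mathbb{Z}}2^{2\alpha j}e^{-\nu(t-T)2^{2\alpha(j-1)}}\leq C/\bigl(\nu(t-T)\bigr)$. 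Extracting this supremum from the $l^q$ norm (valid for every $1\leq q\leq\infty$ because it is a pointwise multiplier) leaves the factor $\bigl(2^{j(4-4\alpha-\frac{3}{p})}\|\varphi_j\hat{F}\|_{L^\infty(I;L^p)}\bigr)_j$, whose $l^q$ norm is exactly $\|F\|_{\mathcal{L}^\infty(I;\dot{FB}^{4-4\alpha-\frac{3}{p}}_{p,q})}$. Thus
$$\|w_1(t)\|_{\dot{FB}^{4-2\alpha-\frac{3}{p}}_{p,q}}\leq \frac{CT}{\nu(t-T)}\,\|F\|_{\mathcal{L}^\infty(I;\dot{FB}^{4-4\alpha-\frac{3}{p}}_{p,q})}\xrightarrow[t\to\infty]{}0,$$
with $T$ held fixed. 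Combining the two estimates produces the required limit.

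The main obstacle is ensuring that the tail estimate for $w_2$ works uniformly across $1\leq q\leq\infty$; the case $q=\infty$ is not covered by dominated convergence in $l^q$ and must be handled by interchanging the two suprema, as described above. The estimate for $w_1$ is more delicate in spirit since it produces a gain of $2^{2\alpha j}$ in the Besov scale that must be absorbed by the time decay of the semigroup, but once the Bernstein-type exponential bound is written in the right form, the $\sup_{x>0}xe^{-cx}$ trick makes the argument clean.
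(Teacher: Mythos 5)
Your proof is correct and follows essentially the same route as the paper: both split the Duhamel integral in time, control the piece far from $t$ via $\sup_{j}2^{2\alpha j}e^{-\nu s 2^{2\alpha(j-1)}}\lesssim (\nu s)^{-1}$, and control the piece near $t$ via $\int e^{-\nu(t-\tau)2^{2\alpha(j-1)}}2^{2\alpha j}\,d\tau\lesssim \nu^{-1}$ together with the decay of $F$; the paper merely splits at $t/2$ instead of at a fixed $T$. Your treatment of the tail term is in fact slightly more careful than the paper's, which asserts without comment that the hypothesis $\parallel F(t)\parallel_{\dot{FB}^{4-4\alpha-\frac{3}{p}}_{p,q}}\to 0$ forces the $\mathcal{L}^{\infty}(t/2\leq\tau\leq t;\cdot)$ norm (supremum taken inside the $l^{q}$ sum) to vanish --- precisely the point you justify by dominated convergence in $l^{q}$ for $q<\infty$ and by interchanging suprema for $q=\infty$.
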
	
\begin{proof}
By def{}inition of the norm in $\dot{FB}^{4-2\alpha-\frac{3}{p}}_{p,q}$ 
and the bound of the symbols of the Leray-projector $\mathbb{P}$, that is, 
$\parallel \hat{\mathbb{P}}\parallel\leq 2$ for each $\xi\in \mathbb{R}^{3}$, we have
\begin{eqnarray*}
\parallel\displaystyle{\int_{0}^{t}}
S^{\alpha}_{\Omega}(t-\tau)\mathbb{P}F(\tau)\,d\tau
\parallel_{\dot{FB}^{4-2\alpha-\frac{3}{p}}_{p,q}}
&=&
\parallel 2^{j(4-2\alpha-\frac{3}{p})}
\parallel\varphi_j 
\displaystyle{\int_{0}^{t}}
(S^{\alpha}_{\Omega})^{\wedge}(t-\tau,\xi)
\hat{\mathbb{P}}\hat{F}(\tau,\xi)\,d\tau
\parallel_{l^{q}}\\
&\leq&
\parallel 2^{j(4-2\alpha-\frac{3}{p})}
\parallel
\displaystyle{\int_{0}^{t}}
e^{-\nu(t-\tau)2^{2\alpha(j-1)}}
\parallel\varphi_j\hat{F}(\tau,\cdot)\parallel_{L^{p}}
\,d\tau
\parallel_{l^{q}}\\
&\leq&
J_1(t)+J_2(t),
\end{eqnarray*}
where 

\begin{equation*}
J_1(t)=
\parallel 2^{j(4-2\alpha-\frac{3}{p})}
\parallel
\displaystyle{\int_{0}^{t/2}}
e^{-\nu(t-\tau)2^{2\alpha(j-1)}}
\parallel\varphi_j\hat{F}(\tau,\cdot)\parallel_{L^{p}}
\,d\tau
\parallel_{l^{q}}
\end{equation*}
and 

\begin{equation*}
J_2(t)=
\parallel 2^{j(4-2\alpha-\frac{3}{p})}
\parallel
\displaystyle{\int_{t/2}^{t}}
e^{-\nu(t-\tau)2^{2\alpha(j-1)}}
\parallel\varphi_j\hat{F}(\tau,\cdot)\parallel_{L^{p}}
\,d\tau
\parallel_{l^{q}}
\end{equation*}
For the term $J_1(t)$ we estimate,
\begin{eqnarray*}
J_1(t)&\leq& C
\parallel
\displaystyle{\int_{0}^{t/2}}
\frac{2^{2\alpha}}{\nu(t-\tau)}\frac{\nu}{2^{2\alpha}}(t-\tau)2^{2\alpha j}
e^{-\frac{\nu}{2^{2\alpha}}(t-\tau)2^{2\alpha j}}
 2^{j(4-4\alpha-\frac{3}{p})}
\parallel\varphi_j\hat{F}(\tau,\cdot)\parallel_{L^{p}}
\,d\tau
\parallel_{l^{q}}\\
&\leq& \frac{C}{\nu}\,\left(
\displaystyle{\sup_{r\geq 0}}\,re^{-r}\right)\cdot
\displaystyle{\int_{0}^{t/2}}\frac{1}{t-\tau}
\parallel F(\tau)\parallel_{\dot{FB}^{4-4\alpha-\frac{3}{p}}_{p,q}}\,d\tau\\
&\leq&
\frac{C}{\nu}\displaystyle{\int_{0}^{1/2}}\frac{1}{1-s}
\parallel F(ts)\parallel_{\dot{FB}^{4-4\alpha-\frac{3}{p}}_{p,q}}\,ds,
\end{eqnarray*}
where in the last inequality we use the change of variables $\tau=ts$. The assumption on $F$ and the Lebesgue dominated convergence theorem give us 

\begin{equation}\label{EE1}
\displaystyle{\lim_{t\rightarrow\infty}}J_1(t)=0.
\end{equation}
For the  term $J_2(t)$ we proceed as follows
\begin{eqnarray*}
J_2(t)&\leq& 
\parallel
\displaystyle{\int_{t/2}^{t}}
2^{2\alpha j}e^{-\nu(t-\tau)2^{2(j-1)}}\,d\tau\,
2^{j(4-4\alpha-\frac{3}{p})}
\parallel\varphi_j\hat{F}(\tau)
\parallel_{{\cal L}^{\infty}(t/2\leq \tau\leq t;L^{p})}
\parallel_{l^{q}}\\
&\leq&
\frac{C}{\nu}
\parallel F(\tau)
\parallel_{{\cal L}^{\infty}(t/2\leq\tau\leq t;\dot{FB}^{4-4\alpha-\frac{3}{p}}_{p,q})}
\end{eqnarray*}
and thus the assumption on $F$ give us

\begin{equation}\label{EE2}
\displaystyle{\lim_{t\rightarrow\infty}}J_2(t)=0.
\end{equation}
Therefore, $(\ref{EE1})$ and $(\ref{EE2})$ give us the required conclusion.
\end{proof}	
Now we shall prove Theorem \ref{teorema2}:
\begin{proof}
By Theorem \ref{teorema1} both the solutions $u$ and $v$ satisfy

\begin{equation*}
\parallel u\parallel_{
{\cal L}^{\infty}(I;\dot{FB}^{4-2\alpha-\frac{3}{p}}_{p,q})
}\leq 2\varepsilon <\frac{1}{2K}\,\,\mbox{and}\,\,
\parallel v\parallel_{
	{\cal L}^{\infty}(I;\dot{FB}^{4-2\alpha-\frac{3}{p}}_{p,q})
}\leq 2\varepsilon <\frac{1}{2K}
\end{equation*}
Since $u$ and $v$ are mild solutions and 

\begin{equation*}
\parallel w_1\otimes w_2\parallel_{\dot{FB}^{5-4\alpha-\frac{3}{p}}_{p,q} }
\leq C
\parallel w_1\parallel_{\dot{FB}^{4-2\alpha-\frac{3}{p}}_{p,q}}
\parallel w_2\parallel_{\dot{FB}^{4-2\alpha-\frac{3}{p}}_{p,q}},
\end{equation*}	
we have the estimative

\begin{equation}\label{F1barra}
\parallel u(t)-v(t)\parallel_{\dot{FB}^{4-2\alpha-\frac{3}{p}}_{p,q}}
\leq g(t)+I_1(t)+I_2(t),
\end{equation}
where 

\begin{equation*}
g(t)=\parallel S^{\alpha}_{\Omega}(t)(u_0-v_0)\parallel_{\dot{FB}^{4-2\alpha-\frac{3}{p}}_{p,q}} + 
\parallel
\displaystyle{\int_{0}^{t}}
S^{\alpha}_{\Omega}(t-\tau)\mathbb{P}(F(\tau)-G(\tau))\,d\tau 
\parallel_{
\dot{FB}^{4-2\alpha-\frac{3}{p}}_{p,q},
}
\end{equation*}

\begin{eqnarray*}
I_1(t)&=&C_1K
\displaystyle{\sup_{\xi\in\mathbb{R}^{3}}}
\displaystyle{\int_{0}^{\delta t}}
\mid\xi\mid^{2\alpha}e^{-\nu(t-\tau)\mid\xi\mid^{2\alpha}}
\parallel u(\tau)-v(\tau)\parallel_{\dot{FB}^{4-2\alpha-\frac{3}{p}}_{p,q}}
\,d\tau\times\\
& & 
\left(
\parallel u\parallel_{{\cal L}^{\infty}(I;\dot{FB}^{4-2\alpha-\frac{3}{p}}_{p,q})}+
\parallel v\parallel_{{\cal L}^{\infty}(I;\dot{FB}^{4-2\alpha-\frac{3}{p}}_{p,q})}
\right)\,\,\mbox{and}
\end{eqnarray*}
\begin{eqnarray*}
I_2(t)&=&C_2K
\displaystyle{\sup_{\xi\in\mathbb{R}^{3}}}
\displaystyle{\int_{\delta t}^{t}}
\mid\xi\mid^{2\alpha}e^{-\nu(t-\tau)\mid\xi\mid^{2\alpha}}
\parallel u(\tau)-v(\tau)\parallel_{\dot{FB}^{4-2\alpha-\frac{3}{p}}_{p,q}}
\,d\tau\times\\
& & 
\left(
\parallel u\parallel_{{\cal L}^{\infty}(I;\dot{FB}^{4-2\alpha-\frac{3}{p}}_{p,q})}+
\parallel v\parallel_{{\cal L}^{\infty}(I;\dot{FB}^{4-2\alpha-\frac{3}{p}}_{p,q})}
\right).
\end{eqnarray*}
Here the constant $0<\delta<1$ will be choosen later. \newline
Let us estimate the term $I_1(t)$: Using the change of variable $\tau=ts$ 
and the fact that 

\begin{equation*}
\displaystyle{\sup_{\xi\in\mathbb{R}^{3}}}
\mid\xi\mid^{2\alpha}e^{-(1-s)t\mid\xi\mid^{2\alpha}}=
(\nu(1-s)t)^{-1}
\displaystyle{\sup_{w\in\mathbb{R}^{3}}}
\mid w\mid^{2\alpha}e^{-\mid w\mid^{2\alpha}}=
(\nu(1-s)t)^{-1}e^{-1},
\end{equation*}
we get

\begin{equation}\label{F2}
I_1(t)\leq 4\varepsilon C_1e^{-1}K
\displaystyle{\int_{0}^{\delta}}
(1-s)^{-1}\parallel u(ts)-v(ts)\parallel_{
\dot{FB}^{4-2\alpha-\frac{3}{p}}_{p,q}}\,ds.
\end{equation}
For the term $I_2(t)$, since 

\begin{equation*}
\displaystyle{\sup_{\xi\in\mathbb{R}^{3}}}
\displaystyle{\int_{\delta t}^{t}}
\mid\xi\mid^{2\alpha}e^{-\nu(t-\tau)\mid\xi\mid^{2\alpha}}\,d\tau
=\frac{1}{\nu}\displaystyle{\sup_{\xi\in\mathbb{R}^{3}}}
(1-e^{-\nu t(1-\delta)\mid\xi\mid^{2\alpha}})=\frac{1}{\nu},
\end{equation*}
we get

\begin{equation}\label{F3}
J_2(t)\leq 4\varepsilon C_2 K
\parallel u-v\parallel_{{\cal L}^{\infty}(\delta t\leq \tau\leq t;
	\dot{FB}^{4-2\alpha-\frac{3}{p}}_{p,q})}
\end{equation}
Since $(u_0,v_0)\in \dot{FB}^{4-2\alpha-\frac{3}{p}}_{p,q}$ and 
$(F,G)\in{\cal L}^{\infty}(I; \dot{FB}^{4-4\alpha-\frac{3}{p}}_{p,q})$, we can see 
that $g\in L^{\infty}(I)$.
Assumption $(\ref{F1})$ and Lemma $\ref{lemma5.1}$ implies that

\begin{equation}\label{F4}
\displaystyle{\lim_{t\rightarrow+\infty}} g(t)=0.
\end{equation}
By using estimates $(\ref{F2})$ and $(\ref{F3})$ in $(\ref{F1barra})$ we have 

\begin{equation*}
 \parallel u(t)-v(t)\parallel_{\dot{FB}^{4-2\alpha-\frac{3}{p}}_{p,q}}
 \leq
g(t)+4\varepsilon Ke^{-1}C_1
\displaystyle{\int_{0}^{\delta}}
(1-s)^{-1}
\parallel u(ts)-v(ts)\parallel_{\dot{FB}^{4-2\alpha-\frac{3}{p}}_{p,q}}\,ds
\end{equation*}
\begin{eqnarray}\label{F5}
&+&
  4\varepsilon K C_2\parallel u-v\parallel_{{\cal L}^{\infty}(\delta t\leq \tau\leq t;\dot{FB}^{4-2\alpha-\frac{3}{p}}_{p,q})},
\end{eqnarray}
for all $t>0$.
Let us denote

\begin{equation}\label{F}
A=
\displaystyle{\limsup_{t\rightarrow \infty}}
\parallel u(t)-v(t)\parallel_{\dot{FB}^{4-2\alpha-\frac{3}{p}}_{p,q}}
=\displaystyle{\lim_{k\in\mathbb{N};k\rightarrow \infty}}
\displaystyle{\sup_{t\geq k}}
\parallel u(t)-v(t)\parallel_{\dot{FB}^{4-2\alpha-\frac{3}{p}}_{p,q}}.
\end{equation}
Since $(u,v)\in {\cal L}^{\infty}(I;\dot{FB}^{4-2\alpha-\frac{3}{p}}_{p,q})$ we have that 
$0\leq A\leq +\infty$. 
Let us show that in fact $A=0$. Lebesgue dominated convergence theorem gives 
\begin{eqnarray*}
&&\displaystyle{\limsup_{t\rightarrow \infty}}
\displaystyle{\int_{0}^{\delta}}(1-s)^{-1} 
\parallel u(ts)-v(ts)\parallel_{\dot{FB}^{4-2\alpha-\frac{3}{p}}_{p,q}}\,ds\\
&\leq&
\displaystyle{\lim_{k\in\mathbb{N};k\rightarrow \infty}}
\displaystyle{\int_{0}^{\delta}}(1-s)^{-1} 
\displaystyle{\sup_{t\geq k}}
\parallel u(ts)-v(ts)\parallel_{\dot{FB}^{4-2\alpha-\frac{3}{p}}_{p,q}}\,ds\\
&\leq&
A
\displaystyle{\int_{0}^{\delta}}(1-s)^{-1}\,ds
\end{eqnarray*}
and therefore 

\begin{equation}\label{F6}
\displaystyle{\limsup_{t\rightarrow \infty}}
\displaystyle{\int_{0}^{\delta}}(1-s)^{-1} 
\parallel u(ts)-v(ts)\parallel_{\dot{FB}^{4-2\alpha-\frac{3}{p}}_{p,q}}\,ds\leq 
A\log(\frac{1}{1-\delta})
\end{equation}
Also we have

\begin{equation*}
\displaystyle{\sup_{t\geq k}}
\parallel u-v\parallel_{{\cal L}^{\infty}(\delta t\leq \tau\leq t; \dot{FB}^{4-2\alpha-\frac{3}{p}}_{p,q})}\leq 
\parallel u-v\parallel_{{\cal L}^{\infty}(\delta k\leq \tau\leq t; \dot{FB}^{4-2\alpha-\frac{3}{p}}_{p,q})}
\end{equation*}
and thus

\begin{equation}\label{F7}
\displaystyle{\limsup_{t\rightarrow \infty}}
\parallel u-v\parallel_{{\cal L}^{\infty}(\delta t\leq \tau\leq t; \dot{FB}^{4-2\alpha-\frac{3}{p}}_{p,q})}\leq A.
\end{equation}
By the application of $\displaystyle{\limsup_{t\rightarrow \infty}}$ in $(\ref{F5})$ 
and using $(\ref{F})$, $(\ref{F6})$ and $(\ref{F7})$ we get
\begin{eqnarray*}
A&\leq&
 4\varepsilon Ke^{-1}C_1A\log(\frac{1}{1-\delta})+4\varepsilon KC_2A\\
&\leq& 
4\varepsilon KC\left( e^{-1}\log(\frac{1}{1-\delta})+1\right)A.
\end{eqnarray*}
If we reconsider $0<\varepsilon<\min\{\frac{1}{4K},\frac{1}{8KC}\}$, for $C=\max\{C_{1},C_{2}\}$, we can take $\delta$ small enough such that we get $A<A$. This implies $A=0$ and therefore 

 \begin{equation*}
 \displaystyle{\limsup_{t\rightarrow \infty}}
 \parallel u(t)-v(t)\parallel_{\dot{FB}^{4-2\alpha-\frac{3}{p}}_{p,q} }=0,
 \end{equation*}
 the required conclusion.
\end{proof}	 
Next corollary establish a necessary condition for the asympotic stability in time of close global mild solutions for the system (FNSC).
\begin{coro}\label{coroteorema2}
Let be $(u,v)\in {\cal L}^{\infty}(I;\dot{FB}^{4-2\alpha-\frac{3}{p}}_{p,q})$ a pair 
of solutions for the (FNSC)-system associated to the pair of initial data $(u_0,v_0)\in \dot{FB}^{4-2\alpha-\frac{3}{p}}_{p,q}$ and a pair of external forces	$(F,G)\in {\cal L}^{\infty}(I;\dot{FB}^{4-4\alpha-\frac{3}{p}}_{p,q})$.
If

\begin{equation*}
\displaystyle{\lim_{t\rightarrow\infty}}
\parallel u(t)-v(t)\parallel_{\dot{FB}^{4-2\alpha-\frac{3}{p}}_{p,q}}=0
\end{equation*}
then

\begin{equation}\label{F8}
\displaystyle{\lim_{t\rightarrow\infty}}
\parallel S^{\alpha}_{\Omega}(t)(u_0-v_0)+
\displaystyle{\int_{0}^{t}}
S^{\alpha}_{\Omega}(t-\tau)
\mathbb{P}(F(\tau)-G(\tau))\,d\tau
\parallel_{
\dot{FB}^{4-2\alpha-\frac{3}{p}}_{p,q}}=0.
\end{equation}
\end{coro}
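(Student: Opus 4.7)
The plan is to invert the mild formulation. Writing the integral equation for both $u$ and $v$ and subtracting gives, for every $t\geq 0$, the identity
\begin{equation*}
S^{\alpha}_{\Omega}(t)(u_0-v_0)+\int_0^t S^{\alpha}_{\Omega}(t-\tau)\mathbb{P}(F(\tau)-G(\tau))\,d\tau=(u(t)-v(t))-(B(u,u)(t)-B(v,v)(t)).
\end{equation*}
Since the first term on the right tends to zero in $\dot{FB}^{4-2\alpha-\frac{3}{p}}_{p,q}$ by hypothesis, the whole proof reduces to showing that $\parallel B(u,u)(t)-B(v,v)(t)\parallel_{\dot{FB}^{4-2\alpha-\frac{3}{p}}_{p,q}}\to 0$ as $t\to\infty$.

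To handle the bilinear difference I would use bilinearity, $B(u,u)-B(v,v)=B(u-v,u)+B(v,u-v)$, and essentially repeat the time-split estimate already employed in the proof of Theorem \ref{teorema2}. Fix $\delta\in(0,1)$, split the time integral defining $B$ at $\tau=\delta t$, and bound the two pieces via Lemma \ref{productX} together with the pointwise supremum $\displaystyle{\sup_{\xi\in\mathbb{R}^{3}}}\mid\xi\mid^{2\alpha}e^{-\nu(t-\tau)\mid\xi\mid^{2\alpha}}\leq (e\nu(t-\tau))^{-1}$. After the change of variable $\tau=ts$, one obtains, up to multiplicative constants depending only on $\parallel u\parallel_{\mathcal{X}}+\parallel v\parallel_{\mathcal{X}}$,
\begin{equation*}
I_1(t)\leq C\int_0^{\delta}\frac{\parallel u(ts)-v(ts)\parallel_{\dot{FB}^{4-2\alpha-\frac{3}{p}}_{p,q}}}{1-s}\,ds,\quad I_2(t)\leq C\,\parallel u-v\parallel_{\mathcal{L}^{\infty}(\delta t\leq\tau\leq t;\,\dot{FB}^{4-2\alpha-\frac{3}{p}}_{p,q})},
\end{equation*}
where $I_1$ and $I_2$ correspond respectively to the integration pieces $[0,\delta t]$ and $[\delta t,t]$.

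The contribution $I_2(t)$ is handled directly: since the time window $[\delta t,t]$ moves to infinity and $\parallel u(\tau)-v(\tau)\parallel_{\dot{FB}^{4-2\alpha-\frac{3}{p}}_{p,q}}\to 0$ by assumption, one has $I_2(t)\to 0$. For $I_1(t)$ I would appeal to Lebesgue's dominated convergence theorem: the integrand converges pointwise to $0$ for every $s\in(0,\delta]$, while being dominated on $[0,\delta]$ by the integrable function $(1-s)^{-1}(\parallel u\parallel_{\mathcal{X}}+\parallel v\parallel_{\mathcal{X}})$. Combining these two limits with the identity above yields the desired convergence \eqref{F8}.

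The main obstacle is precisely the $I_1$ term: the weight $(1-s)^{-1}$ produced by the dispersion factor is non-integrable at $s=1$, which forces the splitting to occur strictly before $t$ (i.e.\ with $\delta<1$) and precludes a uniform argument on the full interval $[0,t]$. Apart from this point, the argument is a direct reuse of the bilinear estimates already developed for Theorem \ref{teorema2}.
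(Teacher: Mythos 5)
Your proposal is correct and follows essentially the same route as the paper: invert the mild formulation to express the linear part as $(u(t)-v(t))-(B(u,u)(t)-B(v,v)(t))$, then reuse the $[0,\delta t]$ / $[\delta t,t]$ splitting and the bilinear estimates from Theorem \ref{teorema2}, with the hypothesis $\parallel u(t)-v(t)\parallel\to 0$ (i.e.\ $A=0$) killing both pieces. Your write-up of the $I_1$ step via dominated convergence and of $I_2$ via the moving time window is exactly what the paper's terser ``proceed as in Theorem \ref{teorema2}'' amounts to.
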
	
\begin{proof}
Since $u$ and $v$ are mild solutions for the (SNSC)-system, we have	
\begin{eqnarray*}
&\displaystyle{\lim_{t\rightarrow\infty}}
\parallel S^{\alpha}_{\Omega}(t)(u_0-v_0)+
\displaystyle{\int_{0}^{t}}
S^{\alpha}_{\Omega}(t-\tau)
\mathbb{P}(F(\tau)-G(\tau))\,d\tau
\parallel_{
	\dot{FB}^{4-2\alpha-\frac{3}{p}}_{p,q}}\leq\\
& \displaystyle{\lim_{t\rightarrow\infty}}
\parallel u(t)-v(t)\parallel_{\dot{FB}^{4-2\alpha-\frac{3}{p}}_{p,q}}
+C_1K
\displaystyle{\sup_{\xi\in\mathbb{R}^{3}}}
\displaystyle{\int_{0}^{\delta t}}
\mid\xi\mid ^{2\alpha} e^{-\nu(t-\tau)\mid\xi\mid^{2\alpha}}
\parallel u(\tau)-v(\tau)\parallel_{\dot{FB}^{4-2\alpha-\frac{3}{p}}_{p,q}}\,d\tau\\
&\times\left(
\parallel u\parallel_{{\cal L}^{\infty}(I;\dot{FB}^{4-2\alpha-\frac{3}{p}}_{p,q})}+
\parallel v\parallel_{{\cal L}^{\infty}(I;\dot{FB}^{4-2\alpha-\frac{3}{p}}_{p,q})}
\right) +\\
&C_2K
\displaystyle{\sup_{\xi\in\mathbb{R}^{3}}}
\displaystyle{\int_{\delta t}^{t}}
\mid\xi\mid ^{2\alpha} e^{-\nu(t-\tau)\mid\xi\mid^{2\alpha}}
\parallel u(\tau)-v(\tau)\parallel_{\dot{FB}^{4-2\alpha-\frac{3}{p}}_{p,q}}\,d\tau\\
&\times\left(
\parallel u\parallel_{{\cal L}^{\infty}(I;\dot{FB}^{4-2\alpha-\frac{3}{p}}_{p,q})}+
\parallel v\parallel_{{\cal L}^{\infty}(I;\dot{FB}^{4-2\alpha-\frac{3}{p}}_{p,q})}
\right),
\end{eqnarray*}
for some $\delta>0$ choosen later. Since $A=0$ and 
$(u,v)\in {\cal L}^{\infty}(I;\dot{FB}^{4-2\alpha-\frac{3}{p}}_{p,q})$ 
we can proceed as in Theorem \ref{teorema2} in order to get $(\ref{F8})$.
\end{proof}	
\begin{obs}
When $F=0$ the trivial solution is assymptotically stable, in the following sense: If we consider the constant $\varepsilon$ independent of the Coriolis parameter, provided by Theorem \ref{teorema1}, then Theorem \ref{teorema2} and Corollary \ref{coroteorema2} tell us that 
for the initial data $u_0\in\dot{FB}^{4-2\alpha-\frac{3}{p}}_{p,q}$ such that

\begin{equation*}
\parallel u_0\parallel_{\dot{FB}^{4-2\alpha-\frac{3}{p}}_{p,q}}<\varepsilon,
\end{equation*}
the associated solution goes to zero in $\dot{FB}^{4-2\alpha-\frac{3}{p}}_{p,q}$ if and only if 

\begin{equation*}
\displaystyle{\lim_{t\rightarrow 0} }
\parallel S^{\alpha}_{\Omega}(t)u_0
\parallel_{\dot{FB}^{4-2\alpha-\frac{3}{p}}_{p,q}}=0.
\end{equation*}
\end{obs}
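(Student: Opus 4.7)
My plan is to reduce the asymptotic stability of the trivial solution to a direct application of Theorem \ref{teorema2} and Corollary \ref{coroteorema2}, by specializing the second solution in those results to $v\equiv 0$ with vanishing external force $G\equiv 0$.

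First I would verify that the setup meets the hypotheses of both results. With $F=0$ and $\parallel u_0\parallel_{\dot{FB}^{4-2\alpha-\frac{3}{p}}_{p,q}}<\varepsilon$, where $\varepsilon$ is the $\Omega$-independent constant from Theorem \ref{teorema1}, we obtain a global mild solution $u\in\mathcal{X}$ with $\parallel u\parallel_{\mathcal{X}}\leq 2\varepsilon$. The pair $(v_0,G)=(0,0)$ trivially satisfies $\parallel v_0\parallel_{\dot{FB}^{4-2\alpha-\frac{3}{p}}_{p,q}}+\parallel G\parallel_{{\cal L}^{\infty}(I;\dot{FB}^{4-4\alpha-\frac{3}{p}}_{p,q})}=0<\varepsilon$, so $v\equiv 0$ is the associated global mild solution supplied by Theorem \ref{teorema1} and belongs to the uniqueness ball. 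Both $u$ and $v$ are therefore eligible to play the roles required in Theorem \ref{teorema2} and Corollary \ref{coroteorema2}.

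For the implication $(\Leftarrow)$, suppose $\displaystyle{\lim_{t\rightarrow\infty}}\parallel S^{\alpha}_{\Omega}(t)u_0\parallel_{\dot{FB}^{4-2\alpha-\frac{3}{p}}_{p,q}}=0$. Since $u_0-v_0=u_0$ and $F-G\equiv 0$, both conditions in (\ref{F1}) hold trivially, and Theorem \ref{teorema2} yields $\displaystyle{\lim_{t\rightarrow\infty}}\parallel u(t)\parallel_{\dot{FB}^{4-2\alpha-\frac{3}{p}}_{p,q}}=\displaystyle{\lim_{t\rightarrow\infty}}\parallel u(t)-v(t)\parallel_{\dot{FB}^{4-2\alpha-\frac{3}{p}}_{p,q}}=0$. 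Conversely, for $(\Rightarrow)$, if $\displaystyle{\lim_{t\rightarrow\infty}}\parallel u(t)\parallel_{\dot{FB}^{4-2\alpha-\frac{3}{p}}_{p,q}}=0$, then applying Corollary \ref{coroteorema2} with $v\equiv 0$ and $G\equiv 0$ reduces the conclusion (\ref{F8}) (since the Duhamel term involving $F-G$ vanishes identically) to $\displaystyle{\lim_{t\rightarrow\infty}}\parallel S^{\alpha}_{\Omega}(t)u_0\parallel_{\dot{FB}^{4-2\alpha-\frac{3}{p}}_{p,q}}=0$, which is the desired equivalence.

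The only bookkeeping issue, which I would not expect to be a genuine obstacle, is ensuring that a single $\varepsilon$ serves all purposes simultaneously: the fixed-point radius $\varepsilon<\tfrac{1}{4K}$ needed in Theorem \ref{teorema1}, and the refined smallness $\varepsilon<\min\{\tfrac{1}{4K},\tfrac{1}{8KC}\}$ used in the proof of Theorem \ref{teorema2} to absorb the $\log(1/(1-\delta))$ factor. Both constraints are compatible and $\Omega$-independent, so one takes $\varepsilon$ small enough from the start. With that caveat in place, the remark is essentially a tautological specialization of the two previous asymptotic statements.
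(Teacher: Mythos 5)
Your proposal is correct and follows exactly the route the paper intends: the remark is stated as an immediate specialization of Theorem \ref{teorema2} (giving the ``if'' direction) and Corollary \ref{coroteorema2} (giving the ``only if'' direction) to $v\equiv 0$, $v_0=0$, $F=G=0$, and your verification of the hypotheses and the $\varepsilon$ bookkeeping is the right accompanying check. Note only that the limit in the statement should read $t\rightarrow\infty$ rather than $t\rightarrow 0$, as you implicitly and correctly assumed.
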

Now, we shall prove Theorem \ref{teorema3}:
\begin{proof}
By Lemma \ref{equivStationary}, $u$ is a stationary solution if and only if 

\begin{equation*}
u=-
\displaystyle{\int_{0}^{\infty}}
S^{\alpha}_{\Omega}(\tau)\mathbb{P}\nabla\cdot (u\otimes u)\,d\tau +
\displaystyle{\int_{0}^{\infty}}
S^{\alpha}_{\Omega}(\tau)\mathbb{P}F\,d\tau.
\end{equation*}	
Thus, since 

\begin{equation*}
\displaystyle{\int_{0}^{\infty}}
(S^{\alpha}_{\Omega})^{\wedge}(\tau)\,d\tau=
\frac{1}{\nu^{2}\mid\xi\mid^{4\alpha} +\frac{\Omega^{2}\xi_3^{2}}{\mid\xi\mid^{2}}}\nu\mid\xi\mid^{2\alpha}I +
\frac{1}{\nu^{2}\mid\xi\mid^{4\alpha} +\frac{\Omega^{2}\xi_3^{2}}{\mid\xi\mid^{2}}}\frac{\Omega\xi_3}{\mid\xi\mid}
R(\xi)
\end{equation*}
we get

\begin{equation*}
\parallel y\parallel_{\dot{FB}^{4-2\alpha-\frac{3}{p}}_{p,q}}
\leq \frac{C}{\nu}\parallel F\parallel_{\dot{FB}^{4-4\alpha-\frac{3}{p}}_{p,q}}
\end{equation*} 
and, by Lemma \ref{productX}, we have 

\begin{equation*}
\parallel B(u,v)\parallel_{\dot{FB}^{4-2\alpha-\frac{3}{p}}_{p,q}}
\leq K
\parallel u\parallel_{\dot{FB}^{4-2\alpha-\frac{3}{p}}_{p,q}}
\parallel v\parallel_{\dot{FB}^{4-2\alpha-\frac{3}{p}}_{p,q}},
\end{equation*} 
where, in the context of the fixed point lemma (Lemma \ref{fixedpoint}), we identify 

\begin{equation*}
    y=\displaystyle{\int_{0}^{\infty}}S^{\alpha}_{\Omega}(\tau)\mathbb{P}F\,d\tau\,\,\mbox{and}\,\,
    B(u,v)=-\displaystyle{\int_{0}^{\infty}}S^{\alpha}_{\Omega}(\tau)\mathbb{P}\nabla(u\otimes v)\,d\tau.
\end{equation*}
Thus, there is $0<\varepsilon<\frac{1}{4K}$ such that, if 

\begin{equation*}
\parallel F\parallel_{\dot{FB}^{4-4\alpha-\frac{3}{p}}_{p,q}}<\frac{\varepsilon\nu}{C}
\end{equation*}
then $\parallel y\parallel_{\dot{FB}^{4-2\alpha-\frac{3}{p}}_{p,q}}\leq \varepsilon$. These estimatives permit us to conclude the proof of the Theorem \ref{teorema3}.
\end{proof}	
\begin{obs}
Let us define the space $X^{\alpha,\Omega}_{p,q}$ of singular functions associated to the fractional Navier-Stokes-Coriolis system, for  $1\leq p\leq +\infty$ and $1\leq q\leq +\infty$, as follow

\begin{equation}\label{General-space}
X^{\alpha,\Omega}_{p,q}=\{ f\in{\cal S}';
\parallel f\parallel_{X^{\alpha,\Omega}_{p,q}}=
\parallel\parallel \varphi_{j}w_1 \hat{f}
\parallel_{L^{p}}\parallel_{l^{q}(\mathbb{Z})}+
\parallel\parallel \varphi_{j} w_2 \hat{f}
\parallel_{L^{p}}\parallel_{l^{q}(\mathbb{Z})}<\infty
\},
\end{equation}
where

\begin{equation*}
w_1(\xi)=\frac{\nu\mid\xi\mid^{6-\frac{3}{p}}}{\nu^{2}\mid\xi\mid^{4\alpha+2}+\Omega^{2}\xi_3^{2}}I\,\,\mbox{and}\,\,
w_2(\xi)=\frac{\Omega\mid\xi_3\mid \mid\xi\mid^{5-2\alpha-\frac{3}{p}}}{
\nu^{2}\mid\xi\mid^{4\alpha+2}+\Omega^{2}\xi_3^{2}}R(\xi).
\end{equation*}

If we consider the estimate
$\parallel y\parallel_{\dot{FB}^{4-2\alpha-\frac{3}{p}}_{p,q}}\leq \parallel F\parallel_{X^{\alpha,\Omega}_{p,q}}$, then Theorem \ref{teorema3} it is also valid if we replace the condition 

\begin{equation*}
\parallel F\parallel_{\dot{FB}^{4-4\alpha-\frac{3}{p}}_{p,q}}<\frac{\varepsilon\nu}{C},
\end{equation*}
by the condition $\parallel F\parallel_{X^{\alpha,\Omega}_{p,q}}\leq \varepsilon$. This inequality allows to consider more general external forces. Moreover, in this case, it is valid the estimate

\begin{equation*}
\parallel u\parallel_{\dot{FB}^{4-2\alpha-\frac{3}{p}}_{p,q}} 
\leq D\parallel F\parallel_{X^{\alpha,\Omega}_{p,q}},
\end{equation*}
for some positive constant $D$. 
\end{obs}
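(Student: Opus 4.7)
The plan is to prove the three assertions of the final Observation in order: (i) the linear estimate $\|y\|_{\dot{FB}^{4-2\alpha-\frac{3}{p}}_{p,q}} \leq \|F\|_{X^{\alpha,\Omega}_{p,q}}$, (ii) solvability of the stationary integral equation under the new smallness condition $\|F\|_{X^{\alpha,\Omega}_{p,q}} \leq \varepsilon$, and (iii) the a priori bound $\|u\|_{\dot{FB}^{4-2\alpha-\frac{3}{p}}_{p,q}} \leq D\|F\|_{X^{\alpha,\Omega}_{p,q}}$. The whole argument builds on the explicit Fourier symbol of $\int_{0}^{\infty}(S^{\alpha}_{\Omega})^{\wedge}(\tau)\,d\tau$ already computed in the proof of Theorem \ref{teorema3}.

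For (i), I would start from
\begin{equation*}
\hat{y}(\xi) = \left(\frac{\nu|\xi|^{2\alpha}}{\nu^{2}|\xi|^{4\alpha} + \Omega^{2}\xi_{3}^{2}/|\xi|^{2}}\,I + \frac{\Omega\,\xi_{3}/|\xi|}{\nu^{2}|\xi|^{4\alpha} + \Omega^{2}\xi_{3}^{2}/|\xi|^{2}}\,R(\xi)\right)\hat{\mathbb{P}}(\xi)\hat{F}(\xi),
\end{equation*}
multiply $\varphi_{j}\hat{y}$ by $2^{j(4-2\alpha-\frac{3}{p})}$, and use the localization $|\xi|\sim 2^{j}$ on $\mathrm{supp}\,\varphi_{j}$ to replace the dyadic factor by $|\xi|^{4-2\alpha-\frac{3}{p}}$ up to a universal constant. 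Absorbing this factor into each scalar symbol and clearing by $|\xi|^{2}$ in both numerator and denominator reproduces exactly the weights $w_{1}(\xi) = \nu|\xi|^{6-\frac{3}{p}}/(\nu^{2}|\xi|^{4\alpha+2}+\Omega^{2}\xi_{3}^{2})\,I$ and $w_{2}(\xi) = \Omega|\xi_{3}||\xi|^{5-2\alpha-\frac{3}{p}}/(\nu^{2}|\xi|^{4\alpha+2}+\Omega^{2}\xi_{3}^{2})\,R(\xi)$ from the definition \eqref{General-space}. Summing the two dyadic pieces in $L^{p}$ and then in $l^{q}(\mathbb{Z})$, and using the uniform bound $\|\hat{\mathbb{P}}(\xi)\|\lesssim 1$ already exploited in Lemma \ref{lemmawt}, delivers the inequality.

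For (ii) and (iii), I would apply the abstract fixed point lemma (Lemma \ref{fixedpoint}) to the stationary integral equation furnished by Lemma \ref{equivStationary}, with the identifications $y = \int_{0}^{\infty}S^{\alpha}_{\Omega}(\tau)\mathbb{P}F\,d\tau$ and $B(u,v) = -\int_{0}^{\infty}S^{\alpha}_{\Omega}(\tau)\mathbb{P}\nabla\cdot(u\otimes v)\,d\tau$. The bilinear estimate $\|B(u,v)\|_{\dot{FB}^{4-2\alpha-\frac{3}{p}}_{p,q}}\leq K\|u\|\|v\|$ is precisely the one already derived in the proof of Theorem \ref{teorema3}, since it depends only on the time-integrated symbol and on Lemma \ref{productX} applied at fixed time. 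Choosing $0<\varepsilon<\frac{1}{4K}$ and imposing $\|F\|_{X^{\alpha,\Omega}_{p,q}}\leq \varepsilon$, step (i) forces $\|y\|\leq \varepsilon$, so Lemma \ref{fixedpoint} produces a unique solution in the closed ball of radius $2\varepsilon$ together with the quantitative bound $\|u\|\leq 2\|y\|\leq 2\|F\|_{X^{\alpha,\Omega}_{p,q}}$, which is (iii) with $D=2$.

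The main technical obstacle I anticipate is the careful algebraic bookkeeping in step (i): the scaling exponent $4-2\alpha-\frac{3}{p}$ of the target Fourier-Besov norm has to interact with the factors $|\xi|^{2\alpha}$ and $\Omega\xi_{3}/|\xi|$ coming from the time-integrated semigroup symbol so as to reproduce exactly the weights $w_{1}$ and $w_{2}$. The equivalence $|\xi|\sim 2^{j}$ on $\mathrm{supp}\,\varphi_{j}$ is what makes this possible, but one must verify that the two equivalent representations of $w_{1},w_{2}$, one with $|\xi|^{4\alpha+2}$ in the denominator and the other with $|\xi|^{4\alpha}$ after clearing $|\xi|^{2}$, are consistently tracked, and that the constants introduced by the equivalence $|\xi|\sim 2^{j}$ are universal and in particular independent of $\Omega$, so that the ultimate smallness assumption sees only the norm in $X^{\alpha,\Omega}_{p,q}$.
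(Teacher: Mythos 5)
Your proposal is correct and follows essentially the same route the paper intends: the explicit time-integrated symbol $\int_{0}^{\infty}(S^{\alpha}_{\Omega})^{\wedge}(\tau)\,d\tau$ from the proof of Theorem \ref{teorema3}, combined with $|\xi|\sim 2^{j}$ on $\mathrm{supp}\,\varphi_{j}$ and the bounded projector symbol, yields precisely the weights $w_{1},w_{2}$ and hence $\parallel y\parallel_{\dot{FB}^{4-2\alpha-\frac{3}{p}}_{p,q}}\lesssim \parallel F\parallel_{X^{\alpha,\Omega}_{p,q}}$, after which Lemma \ref{fixedpoint} with the bilinear bound from Lemma \ref{productX} gives existence, uniqueness in the ball of radius $2\varepsilon$, and $\parallel u\parallel \leq 2\parallel y\parallel \leq D\parallel F\parallel_{X^{\alpha,\Omega}_{p,q}}$. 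Your algebraic bookkeeping in step (i) checks out exactly (clearing $|\xi|^{2}$ turns $\nu^{2}|\xi|^{4\alpha}+\Omega^{2}\xi_{3}^{2}/|\xi|^{2}$ into $\nu^{2}|\xi|^{4\alpha+2}+\Omega^{2}\xi_{3}^{2}$), so nothing is missing.
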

Next lemma describes the asympotic behaviour of non-stationary solutions as time is large enough.
\begin{coro}
Let $u_\infty$ be the stationary mild solution obtained from Theorem \ref{teorema3} associated with the external force $F=F(x)$. Let us consider $v_0\in \dot{FB}^{4-2\alpha-\frac{3}{p}}_{p,q}$ and $G\in{\cal L}^{\infty}(I;\dot{FB}^{4-4\alpha-\frac{3}{p}}_{p,q})$	such that

\begin{equation*}
\parallel v_0\parallel_{\dot{FB}^{4-2\alpha-\frac{3}{p}}_{p,q}}+
\parallel G\parallel_{{\cal L}^{\infty}(I;\dot{FB}^{4-4\alpha-\frac{3}{p}}_{p,q})}\leq \varepsilon<\frac{1}{4K}.
\end{equation*}
\end{coro}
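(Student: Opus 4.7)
The statement as displayed breaks off just before the conclusion, but the surrounding sentence (\emph{``Next lemma describes the asymptotic behaviour of non-stationary solutions as time is large enough''}) and the smallness constraint $\|v_0\|+\|G\|\leq \varepsilon < 1/(4K)$ make clear what is intended: under suitable decay assumptions on $S^{\alpha}_{\Omega}(t)(v_0-u_\infty)$ and on $G(t)-F$, the non-stationary solution $v=v(\cdot,t)$ with data $(v_0,G)$ must satisfy
\[
\lim_{t\to\infty}\|v(t)-u_\infty\|_{\dot{FB}^{4-2\alpha-\frac{3}{p}}_{p,q}}=0.
\]
My plan is to reduce this to a direct application of Theorem \ref{teorema2}, with $u\equiv u_\infty$ playing the role of the ``first'' non-stationary solution and $v(\cdot,t)$ the second.

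The first step is to produce $v$ itself: since the pair $(v_0,G)$ verifies the hypothesis of Theorem \ref{teorema1}, that theorem supplies a unique global mild solution $v\in\mathcal{X}$ with $\|v\|_{\mathcal{X}}\leq 2\varepsilon$. The second, crucial, step is to realize $u_\infty$ as a mild solution in the same class $\mathcal{X}$. For this I would invoke Lemma \ref{equivStationary}(i), which tells us that the stationary profile $u_\infty$ satisfies, for every $t>0$,
\[
u_\infty=S^{\alpha}_{\Omega}(t)u_\infty-\int_{0}^{t}S^{\alpha}_{\Omega}(t-\tau)\mathbb{P}\nabla\!\cdot\!(u_\infty\otimes u_\infty)\,d\tau+\int_{0}^{t}S^{\alpha}_{\Omega}(t-\tau)\mathbb{P}F\,d\tau.
\]
Viewed as a time-independent element of $\mathcal{X}$, $u_\infty$ is therefore the mild solution of the non-stationary (FNSC)-system with initial datum $u_\infty$ and (constant in time) external force $F$. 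The smallness bound $\|u_\infty\|_{\dot{FB}^{4-2\alpha-\frac{3}{p}}_{p,q}}\leq 2\varepsilon$ produced in Theorem \ref{teorema3} places $u_\infty$ in the ball where the uniqueness part of Theorem \ref{teorema1} applies, so this identification is consistent.

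With both $u_\infty$ and $v$ recognized as solutions in the Theorem \ref{teorema1} class arising from $(u_\infty,F)$ and $(v_0,G)$ respectively, Theorem \ref{teorema2} is immediately applicable: its conclusion yields exactly the desired convergence in $\dot{FB}^{4-2\alpha-\frac{3}{p}}_{p,q}$, provided the two hypotheses (\ref{F1}) hold, namely
\[
\lim_{t\to\infty}\|S^{\alpha}_{\Omega}(t)(u_\infty-v_0)\|_{\dot{FB}^{4-2\alpha-\frac{3}{p}}_{p,q}}=0
\quad\text{and}\quad
\lim_{t\to\infty}\|F-G(t)\|_{\dot{FB}^{4-4\alpha-\frac{3}{p}}_{p,q}}=0.
\]
These are the natural hypotheses one must append to the statement of the corollary; they are, in essence, what the remark following Corollary \ref{coroteorema2} already identifies as the necessary-and-sufficient condition for asymptotic stability.

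The main obstacle, as I see it, is not in the chain of implications above—once the identification of $u_\infty$ as a mild solution in $\mathcal{X}$ is in hand, the rest is machinery that is already built—but rather in the legitimacy of viewing a stationary Fourier-Besov distribution as a member of the time-weighted space $\mathcal{L}^{\infty}(I;\dot{FB}^{4-2\alpha-\frac{3}{p}}_{p,q})$ that fits the framework of Theorem \ref{teorema1}. This requires checking that the tempered distribution $u_\infty(x)$, regarded as independent of $t$, has its Littlewood--Paley norms constant in time and that the integral in Lemma \ref{equivStationary}(i) converges in the Fourier-variable sense for every fixed $t>0$; both facts follow from the boundedness of the symbol of $S^{\alpha}_{\Omega}$ and the dominated convergence argument used in the proof of Lemma \ref{lemmawt}, but they are the only genuinely technical points to verify before everything collapses to an invocation of Theorem \ref{teorema2}.
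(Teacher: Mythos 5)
Your proposal is correct and follows essentially the same route as the paper: the paper likewise treats $u_\infty$ and $F$ as constant-in-time functions, uses Lemma \ref{equivStationary} to recognize $u_\infty$ as a mild solution of the non-stationary system, and then applies Theorem \ref{teorema2} under the two decay hypotheses you identify. Your additional remarks on verifying that the time-independent $u_\infty$ genuinely lives in $\mathcal{X}$ are a sensible (and correct) elaboration of a point the paper passes over silently.
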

If we assume that

\begin{equation*}
\displaystyle{\lim_{t\rightarrow \infty}}
\parallel S^{\alpha}_{\Omega}(t)(v_0-u_\infty)\parallel_{\dot{FB}^{4-2\alpha-\frac{3}{p}}_{p,q}}=0 \,\,\mbox{and}\,\,
\displaystyle{\lim_{t\rightarrow \infty}}
\parallel G(t)-F\parallel_{\dot{FB}^{4-4\alpha-\frac{3}{p}}_{p,q}}=0
\end{equation*}
then the solution $v=v(x,t)$ of the (FNSC)-system associated to the initial data $v_0$ and the external force $G$ converges to the stationary solution $u_\infty$, that is, 

\begin{equation*}
\displaystyle{\lim_{t\rightarrow \infty}}
\parallel v(t)-u_\infty\parallel_{\dot{FB}^{4-2\alpha-\frac{3}{p}}_{p,q}}=0
\end{equation*}
\begin{proof}
Let us consider the external force $F$ and the stationary mild solution $u_\infty$ as a constant time-dependent function, that is,

\begin{equation*}
u_\infty(t)=u_\infty\,\,\mbox{for all}\,\,t>0\,\,\mbox{and}\,\,
F(t)=F\,\,\mbox{for all}\,\,t>0.
\end{equation*}	
By Lemma \ref{equivStationary}, we have

\begin{equation*}
u_\infty=S^{\alpha}_{\Omega}(t)u_\infty-
\displaystyle{\int_{0}^{t}}
S^{\alpha}_{\Omega}(t-\tau)\mathbb{P}\nabla\cdot(u_\infty\otimes u_\infty)\,d\tau+
\displaystyle{\int_{0}^{t}}
S^{\alpha}_{\Omega}(\tau)\mathbb{P}F\,d\tau.
\end{equation*}
Since $u_\infty\in \dot{FB}^{4-2\alpha-\frac{3}{p}}_{p,q}$ and $v_0\in \dot{FB}^{4-2\alpha-\frac{3}{p}}_{p,q}$ and external forces $F(t)=F\in{\cal L}^{\infty}(I;\dot{FB}^{4-4\alpha-\frac{3}{p}}_{p,q})$ and $G\in {\cal L}^{\infty}(I;\dot{FB}^{4-4\alpha-\frac{3}{p}}_{p,q})$, Theorem \ref{teorema2} gives 

\begin{equation*}
\displaystyle{\lim_{t\rightarrow \infty}}
\parallel v(t,\cdot)-u_\infty(t,\cdot)\parallel_{\dot{FB}^{4-2\alpha-\frac{3}{p}}_{p,q}}=0.
\end{equation*}
This is the required conclusion.
\end{proof}
In the last part of this work, we shall proof Theorem \ref{teorema4}. Here we denote $X^{\alpha,\Omega}_{p}=X^{\alpha,\Omega}_{p,p}$ and we study the relation between the external force with the Coriolis parameter for the stationary fractional Navier-Stokes-Coriolis system
 
\begin{equation}\label{eq2.8}
\nu(-\Delta)^{\alpha}u+\Omega e_3\times u+(u\cdot\nabla)u+\nabla p=F,\,\,\mbox{div}\,u=0\,\,\mbox{in}\,\,\mathbb{R}^{3}.
\end{equation}
In order to prove Theorem \ref{teorema4}, we just need to show that, for each 
$F\in \dot{FB}^{4-4\alpha-\frac{3}{p}}_{p,p}$ and each $\varepsilon$ there is 
$\Omega_0$ such that for all $\mid\Omega\mid\geq \Omega_0$, we have

\begin{equation}\label{HH0}
\parallel F\parallel_{X^{\alpha, \Omega}_{p}} 
\leq \varepsilon.
\end{equation}

\begin{obs}
Let us observe that $\dot{FB}^{4-4\alpha-\frac{3}{p}}_{p,p}\subset X^{\alpha, \Omega}_{p}$. In fact, to show this inclusion it is enough to observe that

\begin{equation}\label{eq4.9}
\frac{\nu\mid\xi\mid^{2\alpha+2}}{\nu^{2}\mid\xi\mid^{4\alpha+2}+\Omega^{2}
\mid\xi_3\mid^{2}}=
\displaystyle{\int_{0}^{\infty}}
e^{-\nu t\mid\xi\mid^{2\alpha}}
\cos(\frac{\Omega \xi_3}{\mid\xi\mid}t)\,dt
\leq
\displaystyle{\int_{0}^{\infty}}
e^{-\nu t\mid\xi\mid^{2\alpha}}\,dt=\frac{\mid\xi\mid^{-2\alpha}}{\nu}
\end{equation}
and 

\begin{equation}\label{eq4.10}
\frac{\Omega\xi_3\mid\xi\mid}{\nu^{2}\mid\xi\mid^{4\alpha+2}+\Omega^{2}
	\mid\xi_3\mid^{2}}=
\displaystyle{\int_{0}^{\infty}}
e^{-\nu t\mid\xi\mid^{2\alpha}}
\sin(\frac{\Omega \xi_3}{\mid\xi\mid}t)\,dt
\leq
\displaystyle{\int_{0}^{\infty}}
e^{-\nu t\mid\xi\mid^{2\alpha}}\,dt=\frac{\mid\xi\mid^{-2\alpha}}{\nu},
\end{equation}
since inequalities $(\ref{eq4.9})$ and $(\ref{eq4.10})$ give the following estimates

\begin{equation*}
\parallel w_{1}\hat{F}\parallel_{L^{p}}\leq \frac{1}{\nu}\parallel F\parallel_{\dot{FB}^{4-4\alpha-\frac{3}{p}}_{p,p}}
\end{equation*}
and 

\begin{equation*}
\parallel w_{2}\hat{F}\parallel_{L^{p}}\leq \frac{1}{\nu}\parallel F\parallel_{\dot{FB}^{4-4\alpha-\frac{3}{p}}_{p,p}},
\end{equation*}
for every $F\in \dot{FB}^{4-4\alpha-\frac{3}{p}}_{p,p}$
\end{obs}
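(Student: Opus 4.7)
The plan is to establish pointwise estimates on the weights $w_{1}$ and $w_{2}$ appearing in the definition of $X^{\alpha,\Omega}_{p}$ and then use dyadic localization to compare the $X^{\alpha,\Omega}_{p}$ norm with the $\dot{FB}^{4-4\alpha-\frac{3}{p}}_{p,p}$ norm. The key observation is that the scalar parts of $w_{1}$ and $w_{2}$ are, up to a power of $\mid\xi\mid$, precisely the time integrals of the Fourier symbol of the Stokes--Coriolis semigroup, so they inherit the $\Omega$-uniform bound obtained by discarding the oscillating factor in the semigroup kernel.

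First I would verify the identities (\ref{eq4.9}) and (\ref{eq4.10}) via the Laplace-transform formulas $\int_{0}^{\infty}e^{-at}\cos(bt)\,dt=a/(a^{2}+b^{2})$ and $\int_{0}^{\infty}e^{-at}\sin(bt)\,dt=b/(a^{2}+b^{2})$ taken with $a=\nu\mid\xi\mid^{2\alpha}$ and $b=\Omega\xi_{3}/\mid\xi\mid$, after multiplying numerator and denominator by $\mid\xi\mid^{2}$. Bounding each integrand by $e^{-\nu t\mid\xi\mid^{2\alpha}}$ and integrating yields $\mid\xi\mid^{-2\alpha}/\nu$ as an upper bound for both the cosine and the sine integral, uniformly in $\Omega$.

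Next I would factor $w_{1}(\xi)=\mid\xi\mid^{4-2\alpha-\frac{3}{p}}\cdot\frac{\nu\mid\xi\mid^{2\alpha+2}}{\nu^{2}\mid\xi\mid^{4\alpha+2}+\Omega^{2}\xi_{3}^{2}}\,I$ and $w_{2}(\xi)=\mid\xi\mid^{4-4\alpha-\frac{3}{p}}\mid\xi\mid^{2\alpha}\cdot\frac{\Omega\mid\xi_{3}\mid\mid\xi\mid}{\nu^{2}\mid\xi\mid^{4\alpha+2}+\Omega^{2}\xi_{3}^{2}}\cdot R(\xi)$. Inserting the bound from the previous step and using that $\parallel R(\xi)\parallel$ is bounded gives the $\Omega$-uniform pointwise estimates $\mid w_{1}(\xi)\mid\leq \nu^{-1}\mid\xi\mid^{4-4\alpha-\frac{3}{p}}$ and $\parallel w_{2}(\xi)\parallel\leq C\nu^{-1}\mid\xi\mid^{4-4\alpha-\frac{3}{p}}$.

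Finally, on the annular support of $\varphi_{j}$ one has $\mid\xi\mid\sim 2^{j}$, hence $\parallel\varphi_{j}w_{i}\hat{F}\parallel_{L^{p}}\leq C\nu^{-1}2^{j(4-4\alpha-\frac{3}{p})}\parallel\varphi_{j}\hat{F}\parallel_{L^{p}}$ for $i=1,2$. Taking the $l^{p}(\mathbb{Z})$ norm in $j$ and adding the two contributions produces $\parallel F\parallel_{X^{\alpha,\Omega}_{p}}\leq C\nu^{-1}\parallel F\parallel_{\dot{FB}^{4-4\alpha-\frac{3}{p}}_{p,p}}$, which is the desired continuous inclusion. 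There is no serious obstacle beyond recognising the two elementary Laplace-transform identities; the $\Omega$-independence of the final constant is automatic because the bound $\mid\xi\mid^{-2\alpha}/\nu$ is obtained by dropping the oscillation in the kernel, and this discard is uniform in $\Omega$.
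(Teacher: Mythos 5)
Your proposal is correct and follows essentially the same route as the paper: the two Laplace-transform identities with $a=\nu\mid\xi\mid^{2\alpha}$, $b=\Omega\xi_{3}/\mid\xi\mid$, the $\Omega$-uniform pointwise bound $\mid\xi\mid^{-2\alpha}/\nu$ obtained by discarding the oscillating factor, and the resulting weight estimates $\mid w_{i}(\xi)\mid\lesssim \nu^{-1}\mid\xi\mid^{4-4\alpha-\frac{3}{p}}$. The only cosmetic difference is that you conclude via dyadic localization ($\mid\xi\mid\sim 2^{j}$ on $\supp\varphi_{j}$) while the paper can invoke the equivalent integral form of the $\dot{FB}^{s}_{p,p}$ norm directly; both are routine, and your explicit handling of the sign in the sine integral via $\mid\sin\mid\leq 1$ is if anything slightly more careful than the paper's statement.
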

Now, we proceed as P. Konieczny and T. Yoneda (\cite{KoniecznyYonedaDispersiveStationaryNSC}, 2011). We decompose $\mathbb{R}^{3}=
A_\delta \cup B_\delta\cup C_\delta$, where $A_\delta=\{\xi;\mid\xi_{3}\mid>\delta\,\,\mbox{and}\,\,\delta<\mid\xi\mid\leq\frac{1}{\delta}\}$, $B_\delta=\{\xi; \mid\xi_{3}\mid>\delta\,\,\mbox{and}\,\, \mid\xi\mid>\frac{1}{\delta}\}$ and $C_\delta=\{ \xi;\mid\xi_{3}\mid\leq\delta\}$.

Since $F\in\dot{FB}^{4-4\alpha-\frac{3}{p}}_{p,p}$ is given, there is a compact $K\subset \mathbb{R}^{3}$ such that

\begin{equation}\label{HH1}
\parallel \mid\xi\mid^{4-4\alpha-\frac{3}{p}}F
\parallel_{L^{p}(\mathbb{R}^{3}-K)}\leq \frac{\varepsilon}{6}\cdot\nu.
\end{equation}
Observe that for $\delta>$ small enough, the set $K\cap B_{\delta}$ is empty and $\mid K\cap C_{\delta}\mid\leq C\delta^{3}$, for some constant $C>0$. Therefore $\displaystyle{\lim_{\delta\rightarrow 0}} \mid K \cap (B_\delta \cup C_\delta)\mid=0$. Thus, we can choose $0<\delta$ small enough such that

\begin{equation}\label{HH2}
\parallel F\parallel_{X^{\alpha, \Omega}_{p}(K\cap (B_\delta \cup C_\delta))}\leq \frac{\varepsilon}{3}.
\end{equation}
With this fixed $\delta$ we can estimate the integral over $K\cap A_\delta$
 as follows,
 
 \begin{equation*}
 \left[
 \displaystyle{\int_{K\cap A_\delta}}
 \left(
 \frac{\nu\mid\xi\mid^{6-\frac{3}{p}}}{\nu^{2}\mid\xi\mid^{4\alpha+2} +\Omega^{2}\xi_3^{2}}\mid\hat{F}\mid
 \right)^{p}\,d\xi
 \right]^{\frac{1}{p}}
 \leq
 \frac{\nu(1/\delta)^{4\alpha+2}}{\nu^{2}\delta^{4\alpha+2}+\Omega^{2}\delta^{2}}\parallel F\parallel_{\dot{FB}^{4-4\alpha-\frac{3}{p}}_{p,p}}
 \frac{1}{\nu}
 \end{equation*}
and thus we get an $ \Omega_0=\Omega_0(\varepsilon,\delta,\parallel F\parallel_{\dot{FB}^{4-4\alpha-\frac{3}{p}}_{p,p}
})$ such that for all $\mid \Omega\mid\geq \Omega_0$ we have

\begin{equation}\label{HH3}
\left[
\displaystyle{\int_{K\cap A_\delta}}
\left(
\frac{\nu\mid\xi\mid^{6-\frac{3}{p}}}{\nu^{2}\mid\xi\mid^{4\alpha+2} +\Omega^{2}\xi_3^{2}}\mid\hat{F}\mid
\right)^{p}\,d\xi
\right]^{\frac{1}{p}}
\leq
\frac{\varepsilon}{6}.
\end{equation}
Also, we have 

\begin{equation*}
\left[
\displaystyle{\int_{K\cap A_\delta}}
\left(
\frac{\Omega\mid\xi_3\mid\mid\xi\mid^{5-2\alpha-\frac{3}{p}}}{\nu^{2}\mid\xi\mid^{4\alpha+2} +\Omega^{2}\xi_3^{2}}\mid R(\xi)\mid\mid\hat{F}\mid
\right)^{p}\,d\xi
\right]^{\frac{1}{p}}
\leq
\frac{\nu(1/\delta)^{2\alpha+2}}{\nu^{2}\delta^{4\alpha+2}+\Omega^{2}\delta^{2}}\parallel F\parallel_{\dot{FB}^{4-4\alpha-\frac{3}{p}}_{p,p}}
\end{equation*}
and therefore we have the estimate

\begin{equation}\label{HH4}
\left[
\displaystyle{\int_{K\cap A_\delta}}
\left(
\frac{\Omega\mid\xi_3\mid\mid\xi\mid^{5-2\alpha-\frac{3}{p}}}{\nu^{2}\mid\xi\mid^{4\alpha+2} +\Omega^{2}\xi_3^{2}}\mid R(\xi)\mid\mid\hat{F}\mid
\right)^{p}\,d\xi
\right]^{\frac{1}{p}}
\leq\frac{\varepsilon}{6},
\end{equation}
for $\mid \Omega\mid\geq \Omega_0$.
Therefore estimatives $(\ref{HH1})$, $(\ref{HH2})$, 
$(\ref{HH3})$ and $(\ref{HH4})$ give the desired estimative $(\ref{HH0})$.
In order to cover the case for $F\in \dot{FB}^{4-4\alpha-\frac{3}{p}}_{p,\infty}$ we proceed as follow: As in the above estimate 
we just need to prove that, if a constant $\varepsilon>0$ is given, $F\in \dot{FB}^{4-4\alpha-\frac{3}{p}}_{
p,\infty}$ is given and there is $J\in\mathbb{N}$ such that

\begin{equation}\label{condicionF}
\displaystyle{\sup_{\mid j\mid>J}}\left(
\parallel \varphi_{j}w_1\hat{F}\parallel_{L^{p}} + 
\parallel\varphi_{j}w_2\hat{F}\parallel_{L^{p}}\right)
\end{equation}
is small enough (for instance less than $\frac{\varepsilon}{3})$, there exist $\Omega_0>0$ such that for all $\mid\Omega\mid\geq\Omega_0$ we have

\begin{equation}\label{asterisco}
\parallel F\parallel_{X^{\alpha,\Omega}_{p,\infty}}\leq 
\varepsilon
\end{equation}
In fact, given such function $F$, for each $j\in\mathbb{Z}$ such that $\mid j\mid\leq J$, we have a compact set $K_j$ such that 

\begin{equation*}
\parallel \varphi_{j}\mid\xi\mid^{4-4\alpha-\frac{3}{p}}\hat{F}
\parallel_{L^{p}(\mathbb{R}^{3}-K_j)}\leq \frac{\varepsilon}{3}
\end{equation*}
Let us denote $K=K_{-J}\cup \dots \cup K_{-1}\cup K_1\cup \dots \cup K_J$ which is also compact.
Thus, for this compact set $K$ the following estimate is true

\begin{equation}\label{I1}
\parallel \varphi_{j}\mid\xi\mid^{4-4\alpha-\frac{3}{p}}\hat{F}
\parallel_{L^{p}(\mathbb{R}^{3}-K)}\leq\frac{\varepsilon}{3},
\end{equation}
for all $\mid j\mid\leq J$. Since $\displaystyle{\lim_{\delta\rightarrow 0}}\mid K\cap (B_\delta \cup C_\delta)\mid=0$ we can choose $0<\delta$ small enough such that

\begin{equation*}
\parallel w_1\varphi_j\hat{F}\parallel_{L^{p}(K\cap (B_\delta\cup C_\delta))}
\leq \frac{\varepsilon}{12}\,\,\mbox\,\,
\parallel w_2\varphi_j\hat{F}\parallel_{L^{p}(K\cap (B_\delta\cup C_\delta)) }
\leq \frac{\varepsilon}{12},
\end{equation*} 
for all $\mid j\mid\leq J$.
For this choosen value of $\delta$ independent of $j$ and since

\begin{equation*}
\parallel w_1\varphi_j\hat{F}\parallel_{L^{p}(K\cap A_{\delta} )}
 \leq
 \frac{\nu(1/\delta)^{4\alpha+2}}{\nu^{2}\delta^{4\alpha+2}+\Omega^{2}\delta^{2}}\parallel F\parallel_{\dot{FB}^{4-4\alpha-\frac{3}{p}}_{p,\infty}}
 \frac{1}{\nu},
 \end{equation*}
we get an $ \Omega_0=\Omega_0(\varepsilon,\delta,\parallel F\parallel_{\dot{FB}^{4-4\alpha-\frac{3}{p}}_{p,\infty}
})$ such that for all $\mid \Omega\mid\geq \Omega_0$ we have

\begin{equation*}
\parallel w_1\varphi_j\hat{F}\parallel_{L^{p}(K\cap A_{\delta} )}
 \leq \frac{\varepsilon}{12}.
\end{equation*}
Similarly we obtain

\begin{equation*}
\parallel w_2\varphi_j\hat{F}\parallel_{L^{p}(K\cap A_{\delta} )}
 \leq \frac{\varepsilon}{12},
\end{equation*}
for each $\mid j\mid\leq J$ and for all $\mid \Omega\mid\geq \Omega_0$.
Finally, assumption $(\ref{condicionF})$ on $F$ gives the desired estimate $(\ref{asterisco})$. 
\begin{obs}
Since we obtain stationary solutions for the (FNSC)-system, it is possible to consider this approach as a Fourier tranform-based approach for a elliptic system. To see a recent harmonic analysis approach for a singular elliptic equation we suggest (\cite{Castaneda-C-Ferreira-Singular-Elliptic-2021}, 2021), where the authors studied a class of nonlinear elliptic boundary value problem in the half-space $\mathbb{R}^{n}_{+}$ by considering an integral representation based on Fourier-transform in the first $n-1$ variables and dealing with the last variable $x_n$ as a time-variable. They establish an existence and uniqueness framework for the equation in ${\cal PM}^{a}$-spaces and study some regularity properties.
\end{obs}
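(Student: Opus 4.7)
The final statement is a \emph{Remark} rather than a theorem, lemma, or proposition: it records a conceptual/bibliographic observation connecting the stationary-solution technique developed in the paper to Fourier-transform-based approaches for elliptic problems, and it points the reader to the reference \cite{Castaneda-C-Ferreira-Singular-Elliptic-2021}. There is no formally stated mathematical assertion to be proved, so strictly speaking no proof is required. If one wished to make the conceptual content of the remark precise, the natural route would be to spell out two items, as follows.

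First, I would record that the stationary system, obtained by dropping the time derivative in (FNSC),
\begin{equation*}
\nu(-\Delta)^{\alpha}u + \Omega e_{3}\times u + (u\cdot\nabla)u + \nabla p = F,\quad \mbox{div}\,u=0\quad\mbox{in}\,\,\mathbb{R}^{3},
\end{equation*}
is an elliptic system in $u$ (the principal part $\nu(-\Delta)^{\alpha}$ being an elliptic fractional Laplacian and the Coriolis term being of order zero), and then point out that the integral identity established in Lemma \ref{equivStationary} and used in the proof of Theorem \ref{teorema3},
\begin{equation*}
u \;=\; -\int_{0}^{\infty} S^{\alpha}_{\Omega}(\tau)\,\mathbb{P}\,\nabla\!\cdot(u\otimes u)\,d\tau \;+\; \int_{0}^{\infty} S^{\alpha}_{\Omega}(\tau)\,\mathbb{P}\,F\,d\tau,
\end{equation*}
amounts, in Fourier variables, to inverting the elliptic symbol of the linearized stationary operator by means of the resolvent identity
\begin{equation*}
\int_{0}^{\infty} (S^{\alpha}_{\Omega})^{\wedge}(\tau,\xi)\,d\tau \;=\; \bigl(\nu|\xi|^{2\alpha} I + \Omega\,\tfrac{\xi_{3}}{|\xi|}\,R(\xi)\bigr)^{-1},
\end{equation*}
already written out explicitly in the proof of Theorem \ref{teorema3}. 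Reading the paper's construction in this way displays it as a Fourier-transform-based inversion of the elliptic Stokes-Coriolis operator, with the nonlinear term treated via paraproduct estimates in the Fourier-Besov scale.

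Second, I would add one sentence of contrast with \cite{Castaneda-C-Ferreira-Singular-Elliptic-2021}: in that work the Fourier transform is taken only in the $n-1$ tangential variables of the half-space $\mathbb{R}^{n}_{+}$ and the normal variable $x_{n}$ plays the role of a ``time'' parameter driving a semigroup, while in the present work the full spatial Fourier transform is used and the actual time variable drives the semigroup $S^{\alpha}_{\Omega}$; both approaches coincide in spirit in that they recover an elliptic problem through an integral representation along a ``time-like'' parameter in ${\cal PM}^{a}$- or Fourier-Besov-type spaces.

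The main (and only) ``obstacle'' here is that the remark is interpretive rather than mathematical: one has to decide at what level of formality to state the elliptic-regularity/symbol-inversion analogy, since the equivalence between the Duhamel-type integral representation and the Fourier inversion of the elliptic symbol is already contained in the computations performed for Theorem \ref{teorema3}, and no further estimate or theorem is asserted in the remark itself.
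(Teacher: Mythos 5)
You are right that this statement is a purely bibliographic remark with no mathematical assertion, so the paper supplies no proof and none is required; your reading of the stationary integral equation as a Fourier-side inversion of the elliptic Stokes--Coriolis symbol is exactly the computation the paper itself carries out in the proof of Theorem \ref{teorema3}. The only caveat is a sign in your resolvent identity: since the Fourier generator of $S^{\alpha}_{\Omega}$ is $-\nu\mid\xi\mid^{2\alpha}I+\Omega\frac{\xi_{3}}{\mid\xi\mid}R(\xi)$ and $R(\xi)^{2}=-I$ on divergence-free fields, one has $\int_{0}^{\infty}(S^{\alpha}_{\Omega})^{\wedge}(\tau,\xi)\,d\tau=\bigl(\nu\mid\xi\mid^{2\alpha}I-\Omega\frac{\xi_{3}}{\mid\xi\mid}R(\xi)\bigr)^{-1}$, which is consistent with the explicit kernel $\frac{1}{\nu^{2}\mid\xi\mid^{4\alpha}+\Omega^{2}\xi_{3}^{2}/\mid\xi\mid^{2}}\bigl(\nu\mid\xi\mid^{2\alpha}I+\Omega\frac{\xi_{3}}{\mid\xi\mid}R(\xi)\bigr)$ displayed in the paper, rather than with the inverse of $\nu\mid\xi\mid^{2\alpha}I+\Omega\frac{\xi_{3}}{\mid\xi\mid}R(\xi)$ as you wrote it.
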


\textbf{Acknowledgment}:\newline
This work was supported by Cnpq (Brazil) and by University of Campinas, SP, Brazil.

\end{document}